%

\documentclass[10pt,reqno]{amsart}
\usepackage[applemac]{inputenc}
\usepackage[english]{babel}
\usepackage{caption}
\usepackage{amsmath}
\usepackage{bm}
\usepackage{bbm}
\usepackage{xcolor}
\usepackage{graphicx}
\usepackage{amssymb}
\usepackage{braket}
\usepackage{enumitem}
\usepackage{dsfont}
\usepackage{stmaryrd}
\usepackage{url}
\usepackage[left=3.25cm,right=3.25cm,top=3.25cm,bottom=3.25cm]{geometry}

\usepackage{color}
\usepackage{color}

\newcommand{\Ban}{E} 
\newcommand{\Dom}{{\mathcal D}}

\newcommand{\JDif}{\mathrm{Dif}(\Dom,\Jcal)}
\newcommand{\Dcal}{{\mathcal D}}

\newcommand{\Jcal}{{\mathcal J}}
\newcommand{\Lcal}{{\mathcal L}}
\newcommand{\Ocal}{{\mathcal O}}
\newcommand{\calO}{{\mathcal O}}
\newcommand{\Rcal}{{\mathcal R}}
\newcommand{\Xcal}{{\mathcal X}}
\newcommand{\Ycal}{{\mathcal Y}}
\newcommand{\N}{\mathbb{N}}
\newcommand{\Z}{\mathbb{Z}}
\newcommand{\R}{\mathbb{R}}

\newcommand{\C}{\mathbb{C}}

\newcommand{\bbone}{\mathbb{I}}

\newcommand{\rd}{\mathrm{d}}

\newcommand{\T}{\mathbb{T}}

\newcommand\HessL{D^2_{u_\xi}\Lcal}
\newcommand\Ucalxi{\mathcal U_\xi}

\newcommand\Space{U} 
\newcommand{\Ker}{{\rm Ker}\,}
\newcommand\restr[2]{{
  \left.\kern\nulldelimiterspace 
  #1 
  \vphantom{|} 
  \right|_{#2} 
  }}
  
\newcommand{\scalar}{\langle\cdot,\cdot\rangle}

\newcommand{\scal}[2]{{\langle#1,#2\rangle}}

\newcommand{\scalbb}[2]{{\langle#1,#2\rangle}_{\Ban}}
\newcommand{\nablasq}{\nabla^2{\mathcal L}_{\xi_*}}




\newcommand{\vertiii}[1]{{\left\vert\kern-0.25ex\left\vert\kern-0.25ex\left\vert #1 
    \right\vert\kern-0.25ex\right\vert\kern-0.25ex\right\vert}}

\newcommand{\diff}{\,\mathrm{d}}

\renewcommand\emptyset{\mbox{\Large \o}}

\newcommand{\calC}{\mathcal{C}}

\newcommand{\calL}{\mathcal{L}}
\newcommand{\Ucal}{\mathcal{U}}

\newcommand\DomH{\Dcal(\nabla^2\calL_\xi (u_\xi))}

\newcommand{\sech}{\mathrm{sech}}
\newcommand{\en}{\xi}

\newcommand{\re}{\mathrm{Re}}
\newcommand{\im}{\mathrm{Im}}

\newtheorem{theorem}{Theorem}[section]
\newtheorem{proposition}[theorem]{Proposition}
\newtheorem{corollary}[theorem]{Corollary}
\newtheorem{lemma}[theorem]{Lemma}
\theoremstyle{definition}
\newtheorem{definition}[theorem]{Definition}

\theoremstyle{remark}
\newtheorem{remark}[theorem]{Remark}
\numberwithin{equation}{section}

%
%
%

 \newtheorem*{rem*}{Remark}
 \theoremstyle{definition}
  
 \numberwithin{equation}{section}

\newcommand\vb{|}

 \captionsetup{margin=-0.5cm}

 \newcommand{\be}[1]{\begin{equation}\label{#1}}
\newcommand{\ee}{\end{equation}}

\def\squarebox#1{\hbox to #1{\hfill\vbox to #1{\vfill}}}

\title[Orbital stability via the energy-momentum method]{Orbital stability via the energy-momentum method: the case of higher dimensional symmetry groups}
\author[S. {De Bi\`{e}vre}]{Stephan {De Bi\`{e}vre}$^{1,2}$}
\address{$^1$Laboratoire Paul Painlev\'e, CNRS, UMR 8524 et UFR de Math\'ematiques,
Universit\'e de Lille,
F-59655 Villeneuve d'Ascq Cedex, France.}
\email{Stephan.De-Bievre@math.univ-lille1.fr}

\address{$^2$
Equipe-Projet MEPHYSTO,
Centre de Recherche INRIA Futurs,
Parc Scientifique de la Haute Borne, 40, avenue Halley B.P. 70478,
F-59658 Villeneuve d'Ascq cedex, France.}

\author[S. {Rota Nodari}]{Simona {Rota Nodari}$^3$}
\address{$^3$ Institut de Mathématiques de Bourgogne (IMB), CNRS, UMR 5584,
Universit\'e Bourgogne Franche-Comt\'e,
F-21000 Dijon, France.}
\email{simona.rota-nodari@u-bourgogne.fr}

\begin{document}
\begin{abstract} We consider the orbital stability of relative equilibria of Hamiltonian dynamical systems on Banach spaces, in the presence of a multi-dimen\-sional invariance group for the dynamics. We prove a persistence result for such relative equilibria, present a generalization of the Vakhitov-Kolokolov slope condition to this higher dimensional setting, and show how it allows to prove the local coercivity of the Lyapunov function, which in turn implies orbital stability. The method is applied to study the orbital stability of relative equilibria of nonlinear Schr\"odinger and Manakov equations. We provide a comparison of our approach to the one by Grillakis-Shatah-Strauss. 
\end{abstract}
\maketitle       

\section{Introduction}
The \emph{Vakhitov-Kolokolov slope condition}~\cite{gssI, stuart08, vk} is an often used ingredient in the proof of orbital stability of relative equilibria via the energy-momentum method for Hamiltonian systems with a one-dimensional symmetry group. For example, it has been applied in the proofs of stability of stationary or traveling waves of a variety of nonlinear partial differential  equations~(see \cite{pava, bonsoustr1987, ans, eect, ghilecsqu2013, ianlec2009, stuart08, waywei2015} and references therein). It is our goal here to present a natural generalization of this condition to the case where the Hamiltonian system admits a higher dimensional invariance group and to show how to obtain orbital stability from it.

The overall strategy underlying the energy-momentum method is well understood. Simply stated, it is a generalization of the standard Lyapunov method for proving the stability of fixed points to Hamiltonian systems having a Lie symmetry group $G$. Indeed, relative equilibria can be seen as fixed points ``modulo symmetry": they are fixed points of the dynamics induced on the space obtained by quotienting the phase space by the action of an appropriate subgroup of the invariance group.  For finite dimensional systems, the theory goes back to the nineteenth century. It is concisely explained in~\cite{am, ma}, in the modern language of Hamiltonian systems with symmetry, through the use of the properties of the momentum map, notably. This theory first of all gives a simple geometric characterization of all relative equilibria. It also naturally provides a candidate Lyapunov function as well as subgroup of $G$ with respect to which the relative equilibria can be hoped to be relatively stable. More recent developments in the finite dimensional setting can be found in~\cite{mon, monrod, pat, pat95, lermansinger98, or, patrobwulff, RSS}. 

When the Hamiltonian system is infinite dimensional, such as is the case for nonlinear Hamiltonian PDE's, the general philosophy of the energy-momentum method remains the same, but many technical complications arise, as expected. In~\cite{gssI} and~\cite{stuart08}, the theory is worked out in a Hilbert space setting, and when the symmetry group $G$ is a \emph{one-dimensional} Lie group. More recently, in~\cite{debgenrot15}, a version of the energy-momentum method has been presented for Hamiltonian dynamical systems on a Banach space $E$ having as invariance group a Lie group $G$ of \emph{arbitrary finite dimension}. What is shown there is that the proof of orbital stability can be reduced to a ``local coercivity estimate'' on an appropriately constructed Lyapunov function $\Lcal$. It is shown in~\cite{debgenrot15} that, in the above infinite dimensional setting, the construction of the latter follows naturally from the Hamiltonian structure and basic properties of the momentum map, in complete analogy with the finite dimensional situation. In specific models, it then remains to show the appropriate local coercivity estimate on~$\Lcal$ which amounts to a lower bound on its Hessian restricted to an appropriate subspace of $E$ (see~\eqref{eq:hessiancondition}). 

When the invariance group $G$ of the system is one-dimensional, one way to obtain such an estimate is via the aforementioned \emph{Vakhitov-Kolokolov slope condition}. Our main result here is a generalization of this condition to situations with a higher dimensional invariance group $G$, and the proof that it implies the desired coercivity of the Lyapunov function (Theorem~\ref{thm:localcoercivity} and Theorem~\ref{thm:localcoercivitygen}). Using this property, orbital stability can then for example be obtained using the techniques described in~\cite{debgenrot15}.

The rest of this paper is organized as follows. The elements of~\cite{debgenrot15} needed here are summarized in Section~\ref{s:enmommethod}. In Section~\ref{s:maintheoremhil}, we state our main result (Theorem~\ref{thm:localcoercivity}) in the simplest setting, when the phase space of the system is a Hilbert space. Some preliminary lemmas are proven in Section~\ref{s:lemmas} and the proof of Theorem~\ref{thm:localcoercivity} is given in Section~\ref{s:proofmaintheorem}.  In Section~\ref{s:generalcase}, we generalize our result to the Banach space setting, see Theorem~\ref{thm:localcoercivitygen}. Section~\ref{s:persistence} deals with the important question of ``persistence'' of relative equilibra. In Section~\ref{s:examples}, we use our approach to study several applications of our results to the stability study for relative equilibria of the nonlinear Schr\"odinger and Manakov equations.
In Section~\ref{s:comparegss}, finally, a detailed analysis of the differences between our work here and the approach of~\cite{gssII} is provided. In the latter paper, the energy-momentum method had previously been adapted to the case of higher symmetry groups, and a generalization of the \emph{Vakhitov-Kolokolov slope condition} proposed. We have in particular amended, completed and generalized some of the results and arguments of this seminal work.
 
\medskip
\noindent{\bf Acknowledgments.}  This work was supported in part by the Labex CEMPI (ANR-11-LABX-0007-01) and by FEDER (PIA-LABEX-CEMPI 42527). The authors are grateful to Dr. M. Conforti, Prof. F.~Genoud, Prof. S.~Keraani, Prof. S.~Mehdi, Prof S. Trillo and Prof. G.~Tuynman for helpful discussions on the subject matter of this paper. They also thank an anonymous referee for instructive comments and a careful reading of the manuscript.

\section{The energy-momentum method}\label{s:enmommethod}
To make this paper self-contained and to fix our notation, we summarize in this section the energy-momentum method as described in~\cite{debgenrot15}. We refer there for more details and for examples of the structures introduced here.
\subsection{Hamiltonian systems with symmetry}
Let $\Ban$ be a Banach space, $\Dcal$ a domain in $\Ban$ (\emph{i.e.} a dense subset of $\Ban$) and $\Jcal$ a symplector, that is an injective continuous linear map $\Jcal:\Ban\to \Ban^*$ such that $(\Jcal u)(v)=-(\Jcal v)(u)$. We will refer to $\left(\Ban, \Dcal, \Jcal\right)$ as a \emph{symplectic Banach triple}. Next, let $H:\Ban\to \R$ be differentiable on $\Dcal\subset E$. In other words, $H$ is globally defined on $\Ban$, and differentiable at each point $u\in \Dcal$. We say that the function $H$ has a $\Jcal$-compatible derivative if, for all $u\in\Dom$, $D_uH\in\Rcal_{\!\Jcal}$, where $\Rcal_{\!\Jcal}$ is the range of $\Jcal$. In that case we write $H\in\JDif$.

We define a \emph{Hamiltonian flow} for $H\in\JDif$ as a separately continuous map $\Phi^H:\R\times \Ban\to\Ban$ with the following properties:
\begin{enumerate}[label=({\roman*})]
\item For all $t,s\in\R$, $\Phi_{t+s}^H=\Phi_t^H\circ\Phi_s^H,\, \Phi_0^H=\mathrm{Id}$;
\item For all $t\in\R$, $\Phi_t^H(\Dom)=\Dom$;
\item For all $u\in\Dom$, the curve $t\in\R\to u(t):=\Phi_t^H(u)\in\Dcal\subset \Ban$ is differentiable and is the unique solution of 
\begin{equation}\label{eq:hameqmotionJbis}
\Jcal\dot u(t)=D_{u(t)}H,\quad u(0)=u.
\end{equation}
\end{enumerate}

Note that here and below, $D_uH\in\Ban^*$ is our notation for the derivative of $H$ at $u$. We refer to~\eqref{eq:hameqmotionJbis} as the Hamiltonian differential equation associated to $H$ and to its solutions as Hamiltonian flow lines. 

Next, let $G$ be a Lie group, $\frak{g}$ the Lie algebra of $G$ and $\Phi: (g,x)\in G\times \Ban\to \Phi_g(x)\in \Ban,$ an action of $G$ on $\Ban$. In what follows we will suppose all Lie groups are connected. We will say $\Phi$ is a \emph{globally Hamiltonian action} if the following conditions are satisfied:
\begin{enumerate}[label=({\roman*})]
\item For all $g\in G$, $\Phi_g\in C^1(E,E)$ is symplectic.
\item For all $g\in G$, $\Phi_g(\Dcal)=\Dcal$.
\item For all $\xi\in\frak{g}$, there exists $F_\xi\in C^1(\Ban, \R)\cap\JDif$ such that $\Phi_{\exp(t\xi)}=\Phi_t^{F_\xi}$  is a Hamiltonian flow as defined above and the map $\xi\to F_\xi$ is linear. 
\end{enumerate} 
Here and in what follows, we say
$\Psi\in C^0(E,E)\cap C^1(\Dcal, E)$ is a symplectic transformation if 
\begin{equation}\label{eq:sympltransfbis}
\forall u\in \Dcal, \forall v, w\in \Ban, (\Jcal D_u\Psi (v))(D_u\Psi(w)) = (\Jcal v)(w).
\end{equation}
Note that, in the above definition of globally Hamiltonian action, $\Psi=\Phi_g\in C^1(E, E)$. 
For further reference, we introduce, for all $u\in\Dcal$ and for all $\xi\in\frak g$,
\begin{equation}\label{eq:xxi}
X_\xi(u)=\frac{\rd }{\rd t}\Phi_{\exp(\xi t)}(u)_{\mid t=0}.
\end{equation}
It follows from the preceding definitions that
\begin{equation}\label{eq:xxiDF}
X_\xi(u)=\Jcal^{-1} D_{u}F_\xi.
\end{equation}
We will always suppose $G$ is a matrix group, in fact, a subgroup of GL$(\R^N)$. We can then think of the Lie algebra $\frak g$ as a sub-algebra of the $N\times N$ matrices ${\mathcal M}(N,\R)$ and define the adjoint action of $G$ on $\frak g$ via
$$
\mathrm{Ad}_g\xi=g\xi g^{-1},
$$
where in the right hand side we have a product of matrices. We will write $m=\mathrm{dim}\frak g=\mathrm{dim}\frak g^*$, where $\frak g^*$ designates the vector space dual of the Lie algebra $\frak g$. For details, we refer to Section~A.2 of~\cite{{debgenrot15}}, \cite{am} or~\cite{ma}. Note that, for each $u\in E$ fixed, one can think of $\xi\in\frak g\to F_\xi(u)\in\R$ as an element of $\frak g^*$. 
Hence, if we identify (as we always will) $
 \frak g$ and $\frak g^*$ with $\R^m$ and view $F$ as a map $F:E\to\R^m\simeq \frak g^*$, we can write
$$
F_\xi=\xi\cdot F,
$$
where $\cdot$ refers to the canonical inner product on $\R^m$. The map $F$ is called the momentum map of the symplectic group action and, in what follows, we will suppose that $F$ is  $\mathrm{Ad}^*$-equivariant which means that for all $g\in G$, for all $\xi\in \frak{g}$
\[
F_\xi\circ\Phi_g=F_{\mathrm{Ad}_{g^{-1}}\xi},
\]
or equivalently, $F\circ\Phi_g=\mathrm{Ad}_g^*F$.
Here $\mathrm{Ad}^*$ is the co-adjoint action of $G$ on $\frak g^*$.

Now, for all $\mu\in \frak{g}^*$, we define the isotropy group or stabilizer of $\mu$ as
$$
G_\mu=\{g\in G\mid \mathrm{Ad}_g^* \mu=\mu\};
$$ 
 $\frak g_\mu$ is the Lie algebra of $G_\mu$, and $\frak g^*_\mu$ its dual. Finally, for all $\mu\in\frak{g}^*\simeq\R^m$, let
$$
\Sigma_\mu=\{ u\in \Ban\mid F(u)=\mu\}.
$$
We will say $\mu$ is a \emph{regular value of $F$} if $\Sigma_\mu\not=\emptyset$ and if, for all $u\in\Sigma_\mu$, $D_uF$ is surjective (maximal rank). Then $\Sigma_\mu$ is a codimension $m$ sub-manifold of $E$ and its tangent space at $u\in\Sigma_\mu$ is
\begin{equation}
T_u\Sigma_\mu=\mathrm{Ker} D_uF.
\end{equation}
Finally, since the momentum map is Ad$^*$-equivariant, it is easy to see $G_\mu=G_{\Sigma_\mu}$, where $G_{\Sigma_\mu}$ is the subgroup of $G$ leaving $\Sigma_\mu$ invariant. 

Below, $G$ will be an invariance group of $H$, in the sense that $H\circ \Phi_g=H$, for all $g\in G$. This implies $G$ is an invariance group for the dynamics generated by $H$, meaning that for all $g\in G, t\in\R$, $\Phi_g\circ\Phi_t^H=\Phi_t^H\circ \Phi_g$ (See Theorem~\ref{thm:caracterizereleq}~(i) below). Noether's Theorem then implies that the components $F_i$ of the moment map are constants of the motion (See Theorem~\ref{thm:caracterizereleq}~(ii)) and hence that, for any $\mu\in\R^m\simeq \mathfrak g^*$, the level set $\Sigma_\mu$ is invariant under the dynamics $\Phi_t^H$. We refer to Sections~\ref{s:examples} and~\ref{s:example} for examples; see also~\cite{debgenrot15}.

\subsection{Relative equilibria and orbital stability}\label{s:releqorbstab} We now recall the definition of a relative equilibrium. Let $G$ be an invariance group for the dynamics $\Phi_t^H$, as above, and let $\tilde G$ be a subgroup of $G$. Let $u\in\Ban$ and let $\mathcal{O}_u^{\tilde G}=\Phi_{\tilde G}(u)$ be the $\tilde G$-orbit of $u$. We say $u$ is a  relative $\tilde G$-equilibrium of the dynamics if, for all $t\in\R$, $\Phi_t^H(u)\in\mathcal{O}_u^{\tilde G}$. In other words, if the dynamical trajectory through $u$ lies in the group orbit $\mathcal O_u^{\tilde G}$. 

The goal is to investigate under which circumstances these relative equilibria are orbitally stable. Recall that a relative $\tilde G$-equilibrium $u\in \Ban$ is orbitally stable if
\[
\forall \varepsilon>0,\exists \delta>0,\forall v\in \Ban, \left(\rd(v,u)\le \delta \Rightarrow \forall t \in \R,\  \rd(v(t),\Ocal_{u}^{\tilde G})\le \varepsilon\right),
\]
with $v(t)$ the solution of the Hamiltonian equation of motion with initial condition $v(0)=v$.  Here $d(\cdot, \cdot)$ is the distance function induced by the norm on $\Ban$. Note that the definitions of relative equilibrium and of orbital stability are increasingly restrictive as the subgroup $\tilde G$ is taken smaller. Sharper statements are therefore obtained by choosing smaller subgroups $\tilde G$. 

It turns out that, if $H$ is $G$ invariant and the action of $G$ is Ad$^*$-equivariant, then $u$ is a $G$-relative equilibrium if and only if $u$ is a $G_\mu$-relative equilibrium, where $\mu=F(u)$ (See Theorem~7 in~\cite{debgenrot15}). This observation, familiar from the finite dimensional theory (See for instance~\cite{am, ma}), explains why it is natural to try to prove $G_\mu$-orbital stability. This is the approach we shall adopt here. It differs  from the one in~\cite{gssII}, where orbital stability is studied with respect to an a priori different subgroup, as we will explain in detail in Section~\ref{s:comparegss}.  We will also show there that in many situations of interest, the two subgroups actually coincide.

We will write 
\begin{equation}\label{eq:Gmuorbit}
\mathcal O_u=\Phi_{G_\mu}(u),
\end{equation}
where $\mu=F(u)$. And, for all $u\in\Dcal$,
\begin{equation}
	\label{eq:orbits}
	T_{u}\Ocal_{u}=\{X_\xi (u)\mid \xi\in \frak g_\mu\}\subset \Ban.
\end{equation}

For later reference, we recall the following definitions.
\begin{definition}\label{def:regular}
We say $F$ is regular at $u\in\Ban$ if $D_uF$ is of maximal rank. We say $\mu$ is a regular value of $F$, if for all $u\in\Sigma_\mu$, $D_uF$ is of maximal rank. We will refer to relative equilibria $u$ for which $D_uF$ is of maximal rank, as  \emph{regular relative equilibria}.  
\end{definition}

To understand what follows, it is helpful to keep in mind that in practice, the action of the invariance group $G$ is well known explicitly, and typically linear and isometric. The dynamical flow $\Phi_t^H$, on the other hand, is a complex object one tries to better understand using the invariance properties of $H$. 

We now collect some results from~ \cite{debgenrot15} which give a characterization of the relative equilibria of Hamiltonian systems with symmetry and which also yield the candidate Lyapunov function that can be used to study their stability. 

\begin{theorem}\label{thm:caracterizereleq} Let $(\Ban, \Dcal, \Jcal)$ be a symplectic Banach triple. Let $H\in C^1(\Ban, \R)\cap \JDif$ and suppose $H$ has a Hamiltonian flow $\Phi_t^H$. Let furthermore $G$ be a Lie group, and $\Phi$ a globally Hamiltonian action on $\Ban$ with Ad$^*$-equivariant momentum map $F$. Suppose that,
\begin{equation}
\forall g\in G,\quad H\circ \Phi_g=H.\label{eq:HGinv}
\end{equation}
\begin{enumerate}[label=({\roman*})]
\item Then $G$ is an invariance group for $\Phi_t^H$.
\item For all $t\in\R$, $F\circ \Phi_t^H=F$.
\item $u$ is a relative $G$-equilibrium if and only if $u$ is a relative $G_\mu$-equilibrium.
\item Let $u\in \Dcal\subset \Ban$. If there exists $\xi\in \frak g$ so that 
\begin{equation}\label{eq:defrelequilibria}
D_uH-\xi\cdot D_u F=0,
\end{equation}
then $u$ is a relative $G_\mu$-equilibrium. Let $\mu= F(u)\in\R^m\simeq\frak g^*$; if $\mu$ is a regular value of $F$, then $u$ is a critical point of $H_\mu$ on $\Sigma_\mu$, where $H_\mu=H_{\mid_{\Sigma_\mu}}$. 
\end{enumerate}
\end{theorem}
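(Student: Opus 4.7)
I would prove the four items in order, since (iii) uses (ii) and (iv) uses (iii).

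\medskip\noindent\textbf{Plan for (i).} Fix $u\in\Dcal$ and $g\in G$. Set $u(t)=\Phi_t^H(u)$ and $v(t)=\Phi_g(u(t))\in\Dcal$. Then $v(t)$ is differentiable with $\dot v(t)=D_{u(t)}\Phi_g(\dot u(t))$. The plan is to show $v$ solves the Hamiltonian equation for $H$ with initial datum $\Phi_g(u)$, after which uniqueness (property (iii) of Hamiltonian flows) yields $\Phi_g\circ\Phi_t^H=\Phi_t^H\circ\Phi_g$. To identify $\Jcal\dot v(t)$ with $D_{v(t)}H$, I evaluate on an arbitrary $z\in E$, write $z=D_{u(t)}\Phi_g(w)$ (legitimate because $\Phi_g$ is a diffeomorphism, so its differential is an isomorphism), and combine the symplectic transformation property~\eqref{eq:sympltransfbis} with the chain-rule identity $D_{u(t)}H=D_{v(t)}H\circ D_{u(t)}\Phi_g$, itself obtained by differentiating $H\circ\Phi_g=H$.

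\medskip\noindent\textbf{Plan for (ii).} This is the Hamiltonian Noether computation. For $\xi\in\frak g$, differentiating $H\circ\Phi_{\exp(t\xi)}=H$ at $t=0$ gives $D_uH(X_\xi(u))=0$ for every $u\in\Dcal$. Using~\eqref{eq:xxiDF} and the antisymmetry of $\Jcal$, I compute
\[
\frac{\rd}{\rd t}F_\xi(u(t))=D_{u(t)}F_\xi(\dot u(t))=(\Jcal X_\xi(u(t)))(\dot u(t))=-(\Jcal\dot u(t))(X_\xi(u(t)))=-D_{u(t)}H(X_\xi(u(t)))=0.
\]
Since this holds for every $\xi\in\frak g$ and the map $\xi\mapsto F_\xi$ is linear, $F\circ\Phi_t^H=F$.

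\medskip\noindent\textbf{Plan for (iii).} Since $G_\mu\subset G$, every $G_\mu$-relative equilibrium is a $G$-relative equilibrium. Conversely, if $u$ is a $G$-relative equilibrium, then for each $t$ there is $g(t)\in G$ with $\Phi_t^H(u)=\Phi_{g(t)}(u)$. Applying $F$, using (ii), and the Ad$^*$-equivariance $F\circ\Phi_{g(t)}=\mathrm{Ad}^*_{g(t)}F$ gives $\mathrm{Ad}^*_{g(t)}\mu=\mu$, so $g(t)\in G_\mu$, which is exactly $\Phi_t^H(u)\in\Ocal_u=\Phi_{G_\mu}(u)$.

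\medskip\noindent\textbf{Plan for (iv).} Given $\xi\in\frak g$ with $D_uH=\xi\cdot D_uF=D_uF_\xi$, set $v(t)=\Phi_{\exp(t\xi)}(u)=\Phi_t^{F_\xi}(u)$. The idea is to show $v$ also solves $\Jcal\dot v=D_vH$, so by uniqueness $\Phi_t^H(u)=v(t)$, which lies in the $G$-orbit of $u$; part (iii) upgrades this to $G_\mu$-orbit membership. This reduces to proving that the auxiliary function $K:=H-F_\xi$ satisfies $D_{v(t)}K=0$ for all $t$. Both $H$ and $F_\xi$ are invariant under the flow $\Phi_{\exp(s\xi)}$: for $H$ by hypothesis~\eqref{eq:HGinv}, and for $F_\xi$ by Ad$^*$-equivariance, since $F_\xi\circ\Phi_{\exp(s\xi)}=F_{\mathrm{Ad}_{\exp(-s\xi)}\xi}=F_\xi$. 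Differentiating $K\circ\Phi_{\exp(s\xi)}=K$ at the point $u$ yields $D_{v(s)}K\circ D_u\Phi_{\exp(s\xi)}=D_uK=0$, and since $D_u\Phi_{\exp(s\xi)}$ is a linear isomorphism (its inverse is $D_{v(s)}\Phi_{\exp(-s\xi)}$), we conclude $D_{v(s)}K=0$ for every $s$. Finally, if $\mu=F(u)$ is a regular value then $T_u\Sigma_\mu=\Ker D_uF$; the relative-equilibrium identity $D_uH=\xi\cdot D_uF$ shows $D_uH$ annihilates $T_u\Sigma_\mu$, i.e., $u$ is a critical point of $H_\mu=H_{|\Sigma_\mu}$.

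\medskip\noindent\textbf{Expected obstacle.} The only delicate point is the bookkeeping in (i) and (iv): verifying that the identities $\Jcal\dot v=D_vH$ really hold as equalities in $E^*$ (not merely on a subspace). This is handled by exploiting that the differentials of the group elements are linear isomorphisms of $E$, which lets one test against arbitrary vectors $z\in E$. Everything else is a direct application of the chain rule, the antisymmetry of $\Jcal$, and the Ad$^*$-equivariance of $F$.
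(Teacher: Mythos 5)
Your proposal is correct, and I have no substantive objection to any of the four arguments: (i) is the standard symplecticity-plus-uniqueness argument, (ii) is the Hamiltonian Noether computation using the antisymmetry of $\Jcal$, (iii) follows from Ad$^*$-equivariance together with (ii), and (iv) correctly reduces to showing $D_{v(s)}(H-F_\xi)=0$ along the group-orbit curve via the invariance of both $H$ and $F_\xi$ under $\Phi_{\exp(s\xi)}$. Note that the paper itself gives no proof of this theorem but imports it from the reference on which Section~2 is based, and your route is precisely the standard one used there; the only cosmetic point is that in (i) and (ii) your computations are carried out for $u\in\Dcal$, so one should add a word about extending the identities $\Phi_g\circ\Phi_t^H=\Phi_t^H\circ\Phi_g$ and $F\circ\Phi_t^H=F$ to all of $\Ban$ by density of $\Dcal$ and continuity of the maps involved.
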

Equation~\eqref{eq:defrelequilibria} is referred to as the stationary equation in the PDE literature. The theorem states that its solutions determine relative $G$- and hence relative $G_\mu$-equilibria. 

We now turn to the stability analysis of those relative equilibria. Suppose we are given $\xi\in\frak g$ and $u_\xi$, solution of~\eqref{eq:defrelequilibria}. We first note that the fact that $u_\xi$ is a critical point of the restriction $H_{\mu_\xi}$ of the Hamiltonian $H$ to $\Sigma_{\mu_\xi}$ ($\mu_\xi=F(u_\xi)$)  is an immediate consequence of the observation that  $u_{\xi}$ is a critical point of the Lagrange function 
\begin{equation}
	\label{eq:lyapfunction}
\Lcal_{\xi}=H-\xi\cdot F:\Ban\to \R.
\end{equation} 
 The goal is now to prove that these relative equilibria are orbitally stable. As pointed out in \cite{debgenrot15}, the basic idea underlying the energy-momentum method is that, modulo technical problems, a relative equilibrium is expected to be stable if it is not only a critical point but actually a local minimum of $H_{\mu_\xi}$. To establish such a result, it is natural to use the second variation of the Lagrange multiplier theory and to establish that the Hessian of $\Lcal_\xi$ is positive definite when restricted to $T_{u_\xi}\Sigma_{\mu_\xi}\cap T_{u_\xi}\Ocal_{u_\xi}^\perp$. The precise statement is given in Proposition~\ref{prop:hessianestimate} below.

Let $\scalar$ be a scalar product on $\Ban$, which is continuous in the sense that 
\begin{equation}\label{eq:scalprodbis}
\forall v, w\in \Ban, \quad |\langle v, w\rangle|\leq \|v\|_\Ban \|w\|_\Ban,
\end{equation}
where $\|\cdot\|_\Ban$ is our notation for the Banach norm on $\Ban$. Note that $\Ban$ is not necessarily a Hilbert space for this inner product. In addition, even if $(\Ban, \|\cdot\|_E)$ is in fact a Hilbert space with the norm $\|\cdot\|_E$ coming from an inner product $\langle\cdot,\cdot\rangle_E$, the second inner product $\scalar$ is not necessarily equal to $\langle\cdot,\cdot\rangle_E$. 

Let $\|\cdot\|$ be the norm associated to the scalar product $\scalar$ and define $\hat\Ban$ to be the closure of $\Ban$ with respect to the $\|\cdot\|$-norm, that is 
\begin{equation}\label{eq:hatban}
\|\cdot\|=\sqrt{\scalar},\quad \hat \Ban = \overline\Ban^{\|\cdot\|}.
\end{equation} 
Note that $\hat \Ban$ is a Hilbert space and $\Ban \subset \hat \Ban$. As an example, one can think of $\Ban=H^1(\R^n)$ and $\scalar=\scalar_{L^2(\R^n)}$ so that $\hat\Ban=L^2(\R^n)$ in that case. This is the typical situation for the nonlinear Schr\"odinger equation; we refer to~\cite{debgenrot15} and Section~\ref{s:examples} for details and further examples.

For further reference, we collect our main structural assumptions in the following hypotheses:
\vskip 0.2cm
\noindent{\bf Hypothesis A.} Let $(E,\mathcal J, \mathcal D, \langle\cdot, \cdot\rangle, H, G, \Phi, F)$ and suppose:\\
(i) $(\Ban, \mathcal J, \mathcal D)$ is a symplectic Banach triple and $\scalar$ a continuous scalar product on $(\Ban, \|\cdot\|_E)$ as in~\eqref{eq:scalprodbis}. \\
(ii) $H\in C^2(\Ban,\R)\cap \JDif$ \\ 
(iii) $G$ is  a Lie group, and $\Phi$ a globally Hamiltonian $G$-action on $\Ban$ with Ad$^*$-equivariant momentum map $F\in C^2(\Ban, \R^m)$.\\
(iv) $H\circ \Phi_g=H$ for all $g\in G$.

\vskip0.2cm
\noindent{\bf Hypothesis B.}  $\Phi_g$ is linear and preserves both the structure $\scalar$ and the norm $\|\cdot\|_\Ban$ for all $g\in G$.
\vskip0.2cm
One then has:
\begin{proposition}\label{prop:hessianestimate} Suppose Hypotheses A and B hold. 
Let $\xi\in \frak g$ and suppose $u_{\xi}\in \Dom$ satisfies~\eqref{eq:defrelequilibria}, \emph{i.e.} $D_{u_{\xi}}\Lcal_{\xi}=0$, with $\Lcal_{\xi}=H-\xi\cdot F$. Let $\mu_\xi=F(u_\xi)\in \R^m\simeq \frak{g}^*$ and suppose $\mu_\xi$ is a regular value of $F$. 
Suppose in addition that 
\begin{enumerate}[label=({\roman*})]
\item\label{regularityPhi_g} $g\in G_{\mu_\xi}\to \Phi_g(u_\xi)\in \Ban$ is $C^2$.
\item $\forall j=1,\ldots,m$,
\begin{equation}
	\label{eq:hypXbis}
	 \exists \nabla F_j(u_{\xi})\in \Ban \text{ such that }  D_{u_{\xi}}F_j(w)=\scal{ \nabla F_j(u_{\xi})}{w}\ \forall w\in \Ban;
\end{equation}
\item There exists $C>0$ so that 
\begin{equation*}
\forall w\in \Ban,\  D^2_{u_\xi}\Lcal_{\xi}(w,w)\leq C\|w\|_\Ban^2;
\end{equation*}
\item There exists $c>0$ so that 
\begin{equation}\label{eq:hessiancondition}
\forall w\in T_{u_\xi}\Sigma_{\mu_\xi}\cap (T_{u_\xi}\Ocal_{u_{\mu_\xi}})^\perp, \ 
D^2_{u_\xi}\Lcal_{\xi}(w,w)\geq c\|w\|_\Ban^2,
\end{equation}
where
\begin{equation}\label{eq:perp}
\left(T_{u_\xi}\Ocal_{u_\xi}\right)^\perp=\{z\in \Ban\mid \scal{z}{y}=0, \forall y\in T_{u_\xi}\Ocal_{u_\xi}\}.
\end{equation}
\end{enumerate}
Then there exist $\epsilon>0$, $c>0$ so that
\begin{equation}
\label{eq:coercivityglobal}
\forall u\in\Ocal_{u_{\xi}}, \forall u'\in \Sigma_{\mu_\xi}, \quad \rd(u,u')\leq \epsilon\Rightarrow H(u')-H(u)\geq c\rd^2(u', \Ocal_{u_{\mu_\xi}}).
\end{equation}
\end{proposition}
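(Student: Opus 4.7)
The plan is to combine a Taylor expansion of $\Lcal_\xi$ at $u_\xi$ with the coercivity hypothesis~\eqref{eq:hessiancondition}, after reducing the general point $u\in\mathcal{O}_{u_\xi}$ to $u_\xi$ itself via the isometric group action. First, writing $u=\Phi_g(u_\xi)$ for some $g\in G_{\mu_\xi}$ and setting $v:=\Phi_{g^{-1}}(u')$, Hypothesis~B gives $v\in\Sigma_{\mu_\xi}$ and $\|v-u_\xi\|_\Ban=\|u'-u\|_\Ban$, while the $G$-invariance of $H$ together with $F(v)=\mu_\xi$ yields
\begin{equation*}
H(u')-H(u)=H(v)-H(u_\xi)=\Lcal_\xi(v)-\Lcal_\xi(u_\xi).
\end{equation*}
Moreover, applying the implicit function theorem to the $C^2$ map $g\in G_{\mu_\xi}\mapsto\Phi_g(u_\xi)$ (Hypothesis~(i)), one can choose $g$ so that $w:=v-u_\xi$ is $\scalar$-orthogonal to $T_{u_\xi}\mathcal{O}_{u_\xi}$; in any case one has $d(u',\mathcal{O}_{u_\xi})\le\|w\|_\Ban$, and $\|w\|_\Ban$ is of order $d(u',\mathcal{O}_{u_\xi})$.

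Next, Taylor expand. Since $D_{u_\xi}\Lcal_\xi=0$ by~\eqref{eq:defrelequilibria} and $D^2\Lcal_\xi$ is bounded by~(iii),
\begin{equation*}
\Lcal_\xi(v)-\Lcal_\xi(u_\xi)=\tfrac{1}{2}D^2_{u_\xi}\Lcal_\xi(w,w)+o(\|w\|_\Ban^2).
\end{equation*}
It remains to bound $D^2_{u_\xi}\Lcal_\xi(w,w)$ from below even though $w$ need not lie exactly in $T_{u_\xi}\Sigma_{\mu_\xi}\cap(T_{u_\xi}\mathcal{O}_{u_\xi})^\perp$, the subspace where~\eqref{eq:hessiancondition} is known to hold. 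The constraint $F(v)=\mu_\xi=F(u_\xi)$ combined with the $C^2$-regularity of $F$ yields $D_{u_\xi}F(w)=O(\|w\|_\Ban^2)$, so $w$ fails to lie in $T_{u_\xi}\Sigma_{\mu_\xi}=\Ker D_{u_\xi}F$ only by a quadratic correction. Using the surjectivity of $D_{u_\xi}F$ (regular value assumption) together with the gradient representation~\eqref{eq:hypXbis}, one builds a bounded projection onto $T_{u_\xi}\Sigma_{\mu_\xi}$ along a finite-dimensional complement spanned by suitable $\nabla F_j(u_\xi)$; combined with the $\scalar$-orthogonality to $T_{u_\xi}\mathcal{O}_{u_\xi}$ obtained above, this produces $\tilde w\in T_{u_\xi}\Sigma_{\mu_\xi}\cap(T_{u_\xi}\mathcal{O}_{u_\xi})^\perp$ with $\|w-\tilde w\|_\Ban=O(\|w\|_\Ban^2)$.

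Finally, applying~\eqref{eq:hessiancondition} to $\tilde w$ and using the boundedness of $D^2\Lcal_\xi$ to absorb cross terms,
\begin{equation*}
D^2_{u_\xi}\Lcal_\xi(w,w)=D^2_{u_\xi}\Lcal_\xi(\tilde w,\tilde w)+O(\|w\|_\Ban^3)\ge c\|\tilde w\|_\Ban^2-O(\|w\|_\Ban^3)\ge\tfrac{c}{2}\|w\|_\Ban^2
\end{equation*}
once $\|w\|_\Ban$ is small enough. Combining this with the $o(\|w\|_\Ban^2)$ Taylor remainder and $d(u',\mathcal{O}_{u_\xi})\le\|w\|_\Ban$ produces~\eqref{eq:coercivityglobal}. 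The main technical obstacle is the implicit function step: it requires the map $\eta\in\frak{g}_{\mu_\xi}\mapsto X_\eta(u_\xi)\in\Ban$ to have a closed range whose Gram matrix with respect to $\scalar$ is invertible (modulo the isotropy subalgebra of $u_\xi$, which can be factored out by parametrizing $G_{\mu_\xi}$ transverse to the stabilizer), something to be justified from the regularity of the $G_{\mu_\xi}$-orbit at $u_\xi$ and the continuity of $\scalar$.
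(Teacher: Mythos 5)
Your proposal is correct and follows essentially the same route as the paper's proof: reduce to a displacement $w$ from $u_\xi$ via the isometric group action, use the implicit function theorem (whose key input is the invertibility of the Gram matrix $\scal{X_{\eta_i}(u_\xi)}{X_{\eta_j}(u_\xi)}$, guaranteed by the regular-value hypothesis rather than by any separate transversality-to-the-stabilizer argument) to make $w$ orthogonal to $T_{u_\xi}\Ocal_{u_\xi}$, split off a quadratically small component along $\mathrm{span}\{\nabla F_j(u_\xi)\}$ using $F(v)=\mu_\xi$, and conclude by Taylor expansion together with the coercivity hypothesis~\eqref{eq:hessiancondition}. No further comment is needed.
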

This result constitutes a generalization of Proposition~5 in~\cite{debgenrot15}. In fact, if $G_\mu$ is commutative, the latter result applies immediately. If not, the mild regularity condition~(i) of the proposition suffices to obtain the result. We will give the proof of Proposition~\ref{prop:hessianestimate} in the next subsection. The basic message of this result is the following. If $G$ is an invariance group for $H$ that has a globally Hamiltonian action on $\Ban$ and if $u_\xi$ satisfies the stationary equation $D_{u_\xi}\mathcal L_\xi=0$ for some $\xi\in\frak g$, then,  modulo the technical conditions of the proposition, the coercive estimate~\eqref{eq:hessiancondition} on the Hessian of $\mathcal L_\xi$ implies that the restriction of the Hamiltonian $H$ to the constraint surface $\Sigma_{\mu_\xi}$ attains a local minimum on the $G_{\mu_\xi}$-orbit $\Ocal_{u_\xi}$.  As explained in Section 8 of~\cite{debgenrot15}, modulo some further technical conditions,~\eqref{eq:coercivityglobal} allows one to show that $u_\xi$ is $G_{\mu_\xi}$-orbitally stable. (See in particular Theorem~10 and Theorem~11 in~\cite{debgenrot15}). We will give details in the examples of Section~\ref{s:examples} below.

The difficulty in proving~\eqref{eq:hessiancondition} comes from the fact that, in general, the bilinear symmetric form $\HessL_\xi$ is not positive on $\Ban$, but has instead a non-trivial negative cone
$$
\calC_-=\{v\in \Ban\mid \HessL_\xi(v,v)<0\}.
$$
The estimate~\eqref{eq:hessiancondition} implies that $T_{u_\xi}\Sigma_{\mu_\xi}$ does not intersect $\calC_-$. To prove this, we shall show that there exists a maximally negative subspace of $\Ban$ for $\HessL_\xi$ which is $\HessL_\xi$-orthogonal to $T_{u_\xi}\Sigma_{\mu_\xi}$. 

The main goal of this paper is to give a condition (see Theorem~\ref{thm:localcoercivity} $(iv)$ and Theorem~\ref{thm:localcoercivitygen} $(iv)$), which is a generalization to the \emph{Vakhitov-Kolokolov slope condition}, that implies the coercivity estimate \eqref{eq:hessiancondition}. This condition is in general easier to verify than the coercivity estimate itself and allows one  to prove the orbital stability of relative equilibria of general Hamiltonian system. As an example of this method we study in Section~\ref{s:examples} several applications of our results to the stability analysis of relative equilibria of nonlinear Schr\"odinger and Manakov equations.

 \subsection{Proof of Proposition~\ref{prop:hessianestimate}}
   
 The general strategy of the proof is identical to the one of Proposition~5 in~\cite{debgenrot15}. First, we need some simple preliminary results.
 
 \begin{lemma}\label{lem:stuff}
Suppose the hypotheses of Proposition~\ref{prop:hessianestimate} hold.
Then $\forall u\in \Ocal_{u_{\xi}}$, for all $g\in G_{{\mu_\xi}}$, we have
\begin{enumerate}[label=({\alph*})]
\item $
\Phi_g(T_u\Ocal_{u_{\xi}})=T_v\Ocal_{u_{\xi}}, \quad \Phi_g\left(\left(T_u\Ocal_{u_{\xi}}\right)^\perp\right)=\left(T_v\Ocal_{u_{\xi}}\right)^\perp,
$
\item $
\Phi_g(T_u\Sigma_{\mu_\xi})=T_v\Sigma_{\mu_\xi}, \quad \Phi_g\left(\left(T_u\Sigma_{\mu_\xi}\right)^\perp\right)=\left(T_v\Sigma_{\mu_\xi}\right)^\perp,
$
\end{enumerate}
where $v=\Phi_g(u)$. In addition, defining
$V_u={\textrm span}\{\nabla_j F(u)\mid j=1\dots m\}$, we have
$$
V_u^\perp= T_u\Sigma_{\mu_\xi}\quad\text{and}\quad \Ban = T_u\Sigma_{\mu_\xi}\oplus  V_u.
$$
\end{lemma}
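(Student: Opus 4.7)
The lemma splits into two equivariance statements (a), (b) and the algebraic decomposition around $V_u$. The common mechanism is Hypothesis~B: $\Phi_g$ is linear, so $D_u\Phi_g=\Phi_g$, and it is an isometry for the scalar product $\scalar$.

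For~(a) my plan is to start from the group-action identity $\Phi_g\circ\Phi_{\exp(t\eta)}=\Phi_{\exp(t\,\mathrm{Ad}_g\eta)}\circ\Phi_g$, apply both sides at $u$ and differentiate at $t=0$. Using~\eqref{eq:xxi} and linearity of $\Phi_g$ this yields
\begin{equation*}
\Phi_g X_\eta(u)=X_{\mathrm{Ad}_g\eta}(v),\qquad v:=\Phi_g(u).
\end{equation*}
Since $g\in G_{\mu_\xi}$ forces $\mathrm{Ad}_g(\frak g_{\mu_\xi})=\frak g_{\mu_\xi}$ (a standard property of the stabilizer), comparison with~\eqref{eq:orbits} gives $\Phi_g(T_u\Ocal_{u_\xi})\subset T_v\Ocal_{u_\xi}$; the reverse inclusion follows by running the same argument for $g^{-1}$. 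The orthogonal statement in~(a) is then a one-line consequence: for $w\in(T_u\Ocal_{u_\xi})^\perp$ and any $y\in T_v\Ocal_{u_\xi}$, writing $y=\Phi_g y'$ with $y'\in T_u\Ocal_{u_\xi}$ yields $\scal{\Phi_g w}{y}=\scal{w}{y'}=0$.

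Part~(b) I would handle in the same spirit: differentiating the Ad$^*$-equivariance $F\circ\Phi_g=\mathrm{Ad}_g^*F$ at $u$ gives $D_vF\circ\Phi_g=\mathrm{Ad}_g^*\circ D_uF$. Since $\mathrm{Ad}_g^*$ is an automorphism of $\frak g^*$, $\Phi_g$ bijects $\mathrm{Ker}\,D_uF=T_u\Sigma_{\mu_\xi}$ onto $T_v\Sigma_{\mu_\xi}$, and the orthogonal statement again follows from the isometry argument used in~(a).

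For the $V_u$ assertion, hypothesis~(ii) of Proposition~\ref{prop:hessianestimate} gives $w\in V_u^\perp$ iff $\scal{\nabla F_j(u)}{w}=D_uF_j(w)=0$ for every $j$, i.e.\ iff $w\in\mathrm{Ker}\,D_uF=T_u\Sigma_{\mu_\xi}$. Regularity of $\mu_\xi$ makes $D_uF$ surjective onto $\R^m$, so $\{\nabla F_j(u)\}_{j=1}^m$ is linearly independent and $\dim V_u=m$. The Gram matrix $A_{ij}=\scal{\nabla F_i(u)}{\nabla F_j(u)}$ is then symmetric positive definite, and for any $w\in\Ban$ the choice $c=A^{-1}(D_uF(w))\in\R^m$ ensures $w-\sum_i c_i\nabla F_i(u)\in V_u^\perp=T_u\Sigma_{\mu_\xi}$; positive definiteness of $\scalar$ gives $V_u\cap V_u^\perp=\{0\}$, which makes the sum direct. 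I do not expect a genuine obstacle in this lemma; the only points requiring a little attention are the finite-dimensional invertibility of the Gram matrix and the use of $\mathrm{Ad}_g(\frak g_{\mu_\xi})=\frak g_{\mu_\xi}$ for $g\in G_{\mu_\xi}$ in~(a).
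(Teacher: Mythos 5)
Your proof is correct and takes essentially the same route as the paper, whose (very terse) argument is that (a) and (b) follow from $\Phi_g(\Ocal_{u_\xi})=\Ocal_{u_\xi}$, $\Phi_g(\Sigma_{\mu_\xi})=\Sigma_{\mu_\xi}$ and the fact that the linear map $\Phi_g$ preserves $\scalar$; your identities $\Phi_g X_\eta(u)=X_{\mathrm{Ad}_g\eta}(v)$ and $D_vF\circ\Phi_g=\mathrm{Ad}^*_g\circ D_uF$ are just the explicit infinitesimal version of that observation, and your Gram-matrix argument for $\Ban=T_u\Sigma_{\mu_\xi}\oplus V_u$ is the standard verification the paper leaves to the reader. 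The only cosmetic point is that Hypothesis (ii) of Proposition~\ref{prop:hessianestimate} asserts the existence of $\nabla F_j$ only at $u_\xi$, so for a general $u=\Phi_g(u_\xi)$ on the orbit one should note in passing that the gradients are transported by equivariance and the isometry $\Phi_g$ (the paper is equally silent on this).
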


\begin{proof}
(a), respectively (b), follows from the observation that $\Phi_g(\Ocal_{u_{\xi}})=\Ocal_{u_{\xi}}$, respectively  $\Phi_g(\Sigma_{\mu})=\Sigma_{\mu}$, and the fact that $\Phi_g$ preserves the inner product $\langle\cdot, \cdot\rangle$. That $V_u^\perp= T_u\Sigma_{\mu}$ follows from the definitions and the second statement is easily verified. 
\end{proof}
\begin{proof}\emph{(of Proposition~\ref{prop:hessianestimate})}
We start with some preliminaries. First of all, we prove that there exists $\delta>0$ so that, for all $u'\in \Sigma_{\mu_\xi}$ for which $\rd(u', \Ocal_{u_\xi})< \delta$, there exists $g\in G_{\mu_{\xi}}$ so that $\Phi_g(u')-u_\xi\in \left(T_{u_\xi}\Ocal_{u_\xi}\right)^\perp$.

Let $\delta >0$ to be fixed later. We start by remarking that, since $\rd(u', \Ocal_{u_\xi})< \delta$, there exists $v'\in \Ocal_{u_\xi}$ such that $\|u'-v'\|_{\Ban}<\delta$. Moreover, since $v'\in \Ocal_{u_\xi}$, there exists $g'\in G_{\mu_\xi}$ such that $u_\xi=\Phi_{g'}(v')$. It follows that $\|\Phi_{g'}(u')-u_\xi\|_{\Ban}<\delta$. Next, let $\eta_1,\ldots,\eta_{m}$ be a basis of $\frak g_{\mu_\xi}$ and $X_{\eta_i}(u_\xi)$ defined as in~\eqref{eq:xxi}. We define
\begin{align}\label{defFcalift}
	\mathcal F : &\,\Ban\times G_{\mu_{\xi}}\to \R^m\nonumber\\
	&(w,g)\mapsto \begin{pmatrix} <\Phi_{g}(w)-u_\xi,X_{\eta_1}(u_{\xi})>\\\cdots\\<\Phi_{g}(w)-u_\xi,X_{\eta_m}(u_{\xi})>\end{pmatrix}.
\end{align}
Using Hypothesis~B, we can write:
$$
<\Phi_{g}(w)-u_\xi,X_{\eta_i}(u_{\xi})>=<w-\Phi_{g^-1}(u_\xi),\Phi_{g^-1}\left(X_{\eta_i}(u_{\xi})\right)>,
$$
for all $i=1,\ldots,m$. It follows from hypothesis~\ref{regularityPhi_g} of Proposition~\ref{prop:hessianestimate} and the fact that $u_{\xi}\in \Dom$, that $\mathcal F$ is of class $\mathcal C^1$.

Next, remark that $\mathcal F(u_{\xi}, e)=0$. 

We now compute the partial derivative of $\mathcal F$ along $G_{\mu_\xi}$ at the point $(u_\xi,e)$ denoted by 
$$
\partial_g \mathcal F(u_\xi, e) : \frak g_{\mu_{\xi}}\to \R^m.
$$
Note that $\partial_g \mathcal F(u_\xi, e)$ is a $m\times m$ matrix and the ${i,j}$-coefficient is obtained by writing $g=\exp(t\eta_j)$ and computing
\begin{align*}
\frac{d}{dt}\mathcal F_i(u_\xi,\exp(t\eta_j))_{\mid t=0}&=\frac{d}{dt}<u_\xi-\Phi_{\exp(-t\eta_j)}(u_\xi), \Phi_{\exp(-t\eta_j)}\left(X_{\eta_i}(u_{\xi})\right)>_{\mid t=0}\\
&=-<u_\xi, X_{\eta_j}\left(X_{\eta_i}(u_{\xi})\right)>=<X_{\eta_j}(u_\xi), X_{\eta_i}(u_{\xi})>. 
\end{align*}
Here we used Hypothesis~B again.  Since $\mu_\xi$ is a regular value of $F$, it follows that the $X_{\eta_i}$ are linearly independent. Hence, $\partial_g \mathcal F(u_\xi, e)$ is invertible and we can apply the implicit function theorem to $\mathcal F$. As consequence, there exist $\mathcal V_{u_\xi}$ a neighbourhood of $u_\xi$ in $\Ban$, $\mathcal V_{e}$ a neighbourhood of $e$ in $G_{\mu_\xi}$ and a function $\Lambda: \mathcal V_{u_\xi}\to \mathcal V_{e}$ such that if $v\in \mathcal V_{u_\xi}$ then there exists a unique $g_v=\Lambda(v)\in \mathcal V_{e}\subset G_{\mu_\xi}$ such that $\mathcal F(v, g_v)=0$ which means $\Phi_{g_v}(v)-u_{\xi}\in \left(T_{u_\xi}\Ocal_{u_\xi}\right)^\perp$.

Hence, by taking $\delta$ sufficiently small, we have $\Phi_{g'}(u')\in \mathcal V_{u_\xi}$ and we can conclude that there exists $\tilde g=\Lambda(\Phi_{g'}(u'))$ such that $\Phi_{\tilde g}\left(\Phi_{g'}(u')\right)-u_{\xi}\in \left(T_{u_\xi}\Ocal_{u_\xi}\right)^\perp$. Finally, by defining $g=\tilde g g'$, we obtain  $\Phi_{g}(u')-u_{\xi}\in \left(T_{u_\xi}\Ocal_{u_\xi}\right)^\perp$.

Let $u''=\Phi_{g}(u')$. Then $u''-u_\xi\in \left(T_{u_\xi}\Ocal_{u_\xi}\right)^\perp$ and, thanks to Lemma~\ref{lem:stuff},  we can write
$$
u''-u_\xi=(u''-u_\xi)_1+(u''-u_\xi)_2,
$$
where $(u''-u_\xi)_1\in T_{u_\xi}\Sigma_{\mu}\cap \left(T_{u_\xi}\Ocal_{u_\xi}\right)^\perp$ and $(u''-u_\xi)_2\in  V_{u_\xi}$.
Since $u'', u_\xi\in \Sigma_{\mu}$, one has
$$
0=F(u'')-F(u_\xi)=D_{u_\xi}F((u''-u_\xi)_2)+{O}(\|u''-u_\xi\|_\Ban^2).
$$
As $D_{u_\xi}F$ is of maximal rank, it has no kernel in $V_{u_\xi}$, and we conclude there exists $c_0$ so that
$$
\|(u''-u_\xi)_2\|_\Ban\leq {O}(\|u''-u_\xi\|_\Ban^2).
$$
Hence,  
$$
\|(u''-u_\xi)_1\|_\Ban\geq C\|u''-u_\xi\|_\Ban
$$
since $\|u''-u_\xi\|_\Ban$ is small by construction.

We can then conclude the proof as follows. Let $\epsilon>0$ be small enough so that the previous inequalities hold. Then compute
 \begin{eqnarray*}
\Lcal_{\xi}(u')-\Lcal_{\xi}(u_\xi)&=&\Lcal_{\xi}(u'')-\Lcal_{\xi}(u_\xi)\\
&=&D_{u_\xi}\Lcal_{\xi}(u''-u_\xi)+\frac12 D_{u_\xi}^2\Lcal_{\xi}(u''-u_\xi, u''-u_\xi)\\
&\ &\qquad+{o}(\|u''-u_\xi\|_\Ban^2)\\
&=&\frac12 D_{u_\xi}^2\Lcal_{\xi}((u''-u_\xi)_1,(u'-u_\xi)_1)+ {O}(\|u''-u_\xi\|_\Ban^3) \\
&\ &\qquad+{o}(\|u''-u_\xi\|_\Ban^2)\\
&=&\frac12 D_{u_\xi}^2\Lcal_{\xi}((u''-u_\xi)_1,(u''-u_\xi)_1)+{o}(\|u''-u_\xi\|_\Ban^2)\\
&\geq& \frac{c}{2}\|(u''-u_\xi)_1\|_\Ban^2+{o}(\|u''-u_\xi\|_\Ban^2)\\
&\geq &\tilde c\|u''-u_\xi\|_\Ban^2\ge\tilde c\rd^2(u'',\Ocal_{u_\xi})=\tilde c\rd^2(u',\Ocal_{u_\xi}).
\end{eqnarray*}
Note that, in the first equality above, we used the observation that, for all $g\in G_{\mu_\xi}$, for all $u'\in\Sigma_{\mu_\xi}$, one has
$$
\Lcal_\xi(\Phi_g(u'))=\Lcal_\xi(u').
$$ 
This follows from the $G$-invariance of $H$ and from the fact that
$$
\xi\cdot F(\Phi_g(u'))=\xi\cdot \mu_\xi=\xi\cdot F(u')
$$
since both $\Phi_g(u')$ and $u'$ belong to $\Sigma_{\mu_\xi}$. 
\end{proof}

\section{Main result: the Hilbert space setting}\label{s:maintheoremhil}

In this section, we state our main result (Theorem~\ref{thm:localcoercivity}) in the setting where $\Ban$ is a Hilbert space, with inner product $\langle\cdot,\cdot\rangle_E$, and $\| \cdot\|_E=\sqrt{\scalar_E}$. In particular, the inner product $\langle\cdot, \cdot\rangle$ in~\eqref{eq:scalprodbis} and in Hypothesis~A is, in this section, equal to $\langle\cdot,\cdot\rangle_E$.  The Hilbert space structure will be used mainly to control the Hessian of $\mathcal L_\xi$ through the spectral analysis of the associated bounded self-adjoint operator $\nabla^2\mathcal L_\xi$ (see below).  This makes for a simpler statement and proof than in the more general setting of Section~\ref{s:generalcase}, and allows for an easier understanding  of the philosophy of the result.  We point out that the result we obtain in Theorem~\ref{thm:localcoercivity} may be of interest also in finite dimensional problems (dim$\Ban<+\infty$). Indeed, the usual orbital stability results in the literature on finite dimensional Hamiltonian dynamical systems reduce their proof to the coercivity estimate~\eqref{eq:hessiancondition} on the Hessian of $\Lcal_\xi$, which is of dimension dim$\Ban$.  We reduce the problem here to a control on the Hessian of the function $W$ (see~\eqref{eq:WhatF}), which is of dimension $m=\mathrm{dim}G$, typically much lower than dim$\Ban$, especially when the latter is high-dimensional. 

We start with some preliminaries. We will make use of the following hypothesis:
\vskip 0.2cm
\noindent{\bf Hypothesis C.} There exists an open set $\Omega\subset\frak g \simeq \R^m$ and a map $\tilde u\in C^1(\Omega\subset\frak g, \Ban)$
\begin{equation}\label{eq:deftildeu}
\begin{aligned}
\tilde u :\,& \xi\in\Omega\subset \frak{g} \to u_\xi\in\Dcal\subset \Ban\\
\end{aligned} 
\end{equation}
satisfying, for all $\xi\in\Omega$,
\begin{equation}\label{eq:statequation}
D_{u_\xi}H-\xi\cdot D_{u_\xi}F=0.
\end{equation}
\vskip0.2cm
As recalled in section \ref{s:enmommethod}, if $u_\xi$ is a solution to \eqref{eq:statequation}, then $u_\xi$ is $G_\mu$-relative equilibrium with $\mu=\mu_\xi=F(u_\xi)$.

So our starting point is equation~\eqref{eq:statequation}, which in PDE applications is often an elliptic partial differential equation and we suppose we have an $m$-parameter family of solutions, indexed by $\xi$. One of the major difficulties to apply the theory is of course to find such families of solutions. In many cases, one has one single such solution for $\xi_*\in\frak g$ and one needs to ensure there exists a neighbourhood $\Omega$ of $\xi_*$ for which such solutions exist. We will come back to this property of ``persistence of relative equilibria'' in Section~\ref{s:persistence} and present results ensuring Hypothesis~C is satisfied. For now, we will suppose this is the case. 
Next, consider the Lyapunov function $\Lcal_\xi$ defined by \eqref{eq:lyapfunction} and remark that each $u_\xi$ solution to \eqref{eq:statequation} is a critical point of $\Lcal_\xi$. Moreover, define for all $\xi \in \Omega\subset\frak{g}$, 
the map
\begin{equation}\label{eq:WhatF}
\begin{aligned}
W:&\,\xi\in\Omega\subset\frak{g}\to \Lcal_\xi(u_\xi)\in \R.
\end{aligned}
\end{equation}
Note that
$$
W(\xi)=H(u_\xi)-\xi\cdot \hat F(\xi),
$$
where 
\begin{equation}
\label{eq:fhat}
\hat F :\, \xi\in\Omega\subset \frak{g} \to F(u_\xi)\in \frak{g}^*\simeq \R^m.
\end{equation}
For each $\xi\in\Omega$, the Hessian $D^2_\xi W$ of $W$ is a bilinear form on $\R^m$. Hence, we can consider the following decomposition 
$$
\R^m=W_-\oplus W_0\oplus W_+,
$$
where $W_0$ is the kernel of $D^2_\xi W$ and where $D^2_\xi W$ is positive (negative) definite on $W_+$ ($W_-$).  Let $d_0(D^2_\xi W), p(D^2_\xi W), n(D^2_\xi W)$ be the dimensions of these spaces. Note that the decomposition is not unique, but the respective dimensions are. In other words, $W_\pm$ are maximal positive/negative definite spaces for $D_\xi^2W$. Also, in order not to burden the notation, we have not made the $\xi$-dependence of the spaces $W_0, W_\pm$ explicit. Recall that, given a symmetric bilinear form $B$ on a Banach space $\Ban$, a subspace $\mathcal{X}$ of $\Ban$ is said to be a positive (negative) definite subspace for $B$ on $\Ban$ if $B_{\mid_{\mathcal{X}\times \mathcal{X}}}$ is positive (negative) definite. A subspace is maximally positive (negative) definite if it is positive (negative) definite and it is not contained in any other positive (negative) definite subspace. 

Similarly, the Hessian $D^2_{u}\calL_{\xi}$ of $\calL_\xi$ is a symmetric bilinear form on $\Ban$. For each $u\in\Ban$, we define as usual the Morse index $n(D^2_{u}\Lcal_\xi)$  of $u$ for $\mathcal L_\xi$ as the dimension of a maximally negative definite subspace for $D^2_{u}\Lcal_\xi$ in $\Ban$. 

Finally, when $\Ban$ is a Hilbert space, we can define for each $u\in \Ban$ a bounded self-adjoint operator $\nabla^2\calL_\xi (u)$ by 
\begin{equation}
	\label{eq:defHessian}
	\scal{v}{\nabla^2\calL_\xi (u)w}_E=D^2_u\calL_{\xi}(v,w).
\end{equation}
As a consequence, we can consider the spectral decomposition of $\Ban$ for  $\nabla^2\calL_\xi (u_\xi)$
\begin{equation}	
	\label{eq:spectraldec}
	E=E_-\oplus E_0\oplus E_+
\end{equation}
with $E_0=\mathrm{Ker} \nabla^2\calL_\xi (u_\xi)=\mathrm{Ker}D^2_{u_\xi}\calL_\xi$, and $E_\pm$ the positive and negative spectral subspaces of $\nabla^2\calL_\xi (u)$. Clearly 
$E_\pm$ are maximally positive/negative subspaces for $\nabla^2_{u_\xi}\calL_\xi$ so that $n(D^2_{u_\xi}\Lcal_\xi)=\dim\,E_-$. 
We can now state our main result. 
\begin{theorem}
	\label{thm:localcoercivity}
	Suppose $(\Ban, \langle\cdot,\cdot\rangle_E)$ is a Hilbert space and that Hypotheses A and C hold. Let $\xi\in \Omega$ and suppose
	\begin{enumerate}
		\item[(i)] $D^2_\xi W$ is non-degenerate,
		\item[(ii)]  $\mathrm{Ker}D^2_{u_\xi}\calL_\xi= T_{u_\xi}\Ocal_{u_\xi}$, \label{hyp:kernel}
		\item[(iii)]  $\inf (\sigma(\nabla^2\calL_\xi (u_\xi))\cap (0,+\infty))>0$,
		\item[(iv)]  $p(D^2_\xi W)=n(D^2_{u_\xi}\Lcal_\xi)$.
	\end{enumerate}
	Then there exists $\delta>0$ such that
	\begin{equation}
		\label{eq:localcoercivity}
		\forall v\in T_{u_\xi}\Sigma_{\mu_\xi}\cap \left(T_{u_\xi}\Ocal_{u_\xi}\right)^{\perp},\ D^2_u\calL_{\xi}(v,v)\ge \delta \| v\|_E^2.
	\end{equation}
\end{theorem}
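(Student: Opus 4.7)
The plan is to reduce the infinite-dimensional coercivity estimate~\eqref{eq:localcoercivity} to a combination of finite-dimensional spectral information on $\frak g$ and the spectral gap supplied by~(iii). The engine is the identification of $-D^2_\xi W$ with the restriction of $\HessL$ to the $m$-dimensional space of infinitesimal variations $\Ucal:=\{\partial_\eta u_\xi:\eta\in\frak g\}\subset E$. Differentiating the stationary equation~\eqref{eq:statequation} along $\eta\in\frak g$ and using Hypothesis~C yields the \emph{key identity}
\begin{equation*}
\nabla^2\calL_\xi(u_\xi)\,\partial_\eta u_\xi \;=\; \eta\cdot\nabla F(u_\xi),
\end{equation*}
where $\nabla F_j(u_\xi)\in E$ is the Riesz representative of $D_{u_\xi}F_j$. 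Combining this with the stationarity identity $D_\xi W(\eta)=-\eta\cdot\hat F(\xi)$ gives the \emph{fundamental relation}
\begin{equation*}
D^2_\xi W(\eta,\eta')\;=\;-\,D^2_{u_\xi}\calL_\xi(\partial_\eta u_\xi,\partial_{\eta'}u_\xi),\qquad \eta,\eta'\in\frak g.
\end{equation*}
The non-degeneracy hypothesis~(i) then forces $\eta\mapsto\partial_\eta u_\xi$ to be injective, so $\dim\Ucal=m$, and makes $D_{u_\xi}F|_{\Ucal}$ an isomorphism onto $\R^m$, giving the algebraic direct sum $E=\Ucal\oplus T_{u_\xi}\Sigma_{\mu_\xi}$.

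Next, pick a maximally positive subspace $W_+\subset\frak g$ for $D^2_\xi W$ and set $N_-:=\{\partial_\eta u_\xi:\eta\in W_+\}\subset\Ucal$. The fundamental relation makes $\HessL$ negative definite on $N_-$ with $\dim N_-=p(D^2_\xi W)$; by~(iv) this equals $n(\HessL)=\dim E_-$, so $N_-$ is a \emph{maximally} $\HessL$-negative subspace of $E$. The key identity further produces the crucial orthogonality
\begin{equation*}
D^2_{u_\xi}\calL_\xi(\partial_\eta u_\xi,v)\;=\;\langle\eta\cdot\nabla F(u_\xi),v\rangle_E\;=\;\eta\cdot D_{u_\xi}F(v)\;=\;0\qquad\forall\, v\in T_{u_\xi}\Sigma_{\mu_\xi},
\end{equation*}
and $N_-\cap T_{u_\xi}\Sigma_{\mu_\xi}=\{0\}$ by the direct sum. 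A Sylvester-type argument then shows $\HessL(v,v)\ge 0$ for all $v\in T_{u_\xi}\Sigma_{\mu_\xi}$: any $v\neq 0$ with $\HessL(v,v)<0$ would make $\mathrm{span}(v)\oplus N_-$ an $(n(\HessL)+1)$-dimensional $\HessL$-negative subspace. If in addition $\HessL(v,v)=0$ with $v\in T_{u_\xi}\Sigma_{\mu_\xi}$, minimality forces $\HessL(v,\cdot)|_{T_{u_\xi}\Sigma_{\mu_\xi}}\equiv 0$, so $\nabla^2\calL_\xi(u_\xi)v\in V_{u_\xi}$; the key identity lets one subtract $\sum_j c_j\partial_{\xi_j}u_\xi\in\Ucal$ to land in $\mathrm{Ker}\,\nabla^2\calL_\xi(u_\xi)=E_0=T_{u_\xi}\Ocal_{u_\xi}$ by~(ii), and applying $D_{u_\xi}F$ together with~(i) forces $c=0$, yielding $v\in E_0$.

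The main obstacle is the final, infinite-dimensional step: upgrading positive-definiteness on $N:=T_{u_\xi}\Sigma_{\mu_\xi}\cap (T_{u_\xi}\Ocal_{u_\xi})^\perp$ to the coercive estimate~\eqref{eq:localcoercivity}. I argue by contradiction: suppose there exist $v_n\in N$ with $\|v_n\|_E=1$ and $\HessL(v_n,v_n)\to 0$, and split $v_n=v_n^-+v_n^+\in E_-\oplus E_+$ (the $E_0$-component vanishing since $v_n\in E_0^\perp$). Since $\dim E_-<\infty$ by~(iv), extract $v_n^-\to v_\infty^-$ strongly; by~(iii), $\nabla^2\calL_\xi(u_\xi)|_{E_+}$ is coercive, so $\{v_n^+\}$ is bounded and $v_n^+\rightharpoonup v_\infty^+$ along a subsequence. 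Weak lower-semicontinuity of the positive quadratic form $\langle\nabla^2\calL_\xi(u_\xi)\,\cdot\,,\,\cdot\rangle_E$ on $E_+$, strong convergence on $E_-$, weak continuity of $D_{u_\xi}F$, and weak closedness of $E_0^\perp$ yield $v_\infty:=v_\infty^-+v_\infty^+\in N$ with $\HessL(v_\infty,v_\infty)\le 0$; combined with non-negativity of $\HessL$ on $T_{u_\xi}\Sigma_{\mu_\xi}$ and the kernel identification, $v_\infty\in E_0\cap E_0^\perp=\{0\}$. Then $v_n^-\to 0$ strongly, hence $\langle\nabla^2\calL_\xi(u_\xi)v_n^+,v_n^+\rangle_E=\HessL(v_n,v_n)-\langle\nabla^2\calL_\xi(u_\xi)v_n^-,v_n^-\rangle_E\to 0$, and~(iii) then forces $\|v_n^+\|_E\to 0$, contradicting $\|v_n\|_E=1$.
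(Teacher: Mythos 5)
Your proposal is correct, and its first two thirds follow the paper's own route almost verbatim: the identity $D^2_\xi W(\eta,\eta')=-D^2_{u_\xi}\Lcal_\xi(\eta\cdot\nabla_\xi u_\xi,\eta'\cdot\nabla_\xi u_\xi)$, the $D^2_{u_\xi}\Lcal_\xi$-orthogonality of $\Ucalxi$ and $\Ker D_{u_\xi}F$, the direct sum $\Ban=\Ucalxi\oplus T_{u_\xi}\Sigma_{\mu_\xi}$ from hypothesis~(i), the maximality of the negative subspace from hypothesis~(iv), and the Sylvester-type and kernel arguments are exactly Lemmas~\ref{lem:transversality}, \ref{lem:prophessWhessL} and~\ref{lem:Borthopositive} together with the opening of Section~\ref{s:proofmaintheorem}. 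Where you genuinely diverge is the last step, upgrading strict positivity on $\Ycal=T_{u_\xi}\Sigma_{\mu_\xi}\cap(T_{u_\xi}\Ocal_{u_\xi})^\perp$ to the uniform bound: the paper introduces the projection $P$ onto $\Ycal$ inside $E_0^\perp$, notes that $P\nabla^2\Lcal_\xi(u_\xi)P$ differs from $\nabla^2\Lcal_\xi(u_\xi)_{\mid E_0^\perp}$ by a finite-rank (hence compact) operator, so the essential spectra agree, and concludes that $0$ lies neither in the essential spectrum (by~(iii) and~(iv)) nor in the point spectrum (by strict positivity on $\Ycal$), yielding a spectral gap; you instead run a minimizing-sequence contradiction, exploiting $\dim E_-<\infty$ (from~(iv)) to get strong convergence of the negative components, the gap from~(iii) together with weak lower semicontinuity of the positive form on $E_+$, and weak closedness of $T_{u_\xi}\Sigma_{\mu_\xi}$ and $E_0^\perp$. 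Both are sound; your final identity tacitly uses that the cross terms $D^2_{u_\xi}\Lcal_\xi(v_n^-,v_n^+)$ vanish because $E_\pm$ are mutually orthogonal spectral subspaces, which is worth stating. The trade-off is that the paper's operator-theoretic route carries over with only minor changes to the unbounded setting of Theorem~\ref{thm:localcoercivitygen}, where the form is defined only on $\Ban\subset\hat\Ban$ and the choice of topology for a weak-compactness argument becomes delicate, whereas your argument is arguably more elementary in the Hilbert space setting considered here.
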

We will say a relative equilibrium is \emph{non-degenerate} when $D^2_\xi W$ is non-degenerate. Since~\eqref{eq:localcoercivity} is the same as~\eqref{eq:hessiancondition}, one can then use Proposition~\ref{prop:hessianestimate} together with Theorems~10 and~11 of~\cite{debgenrot15} to show that $u_\xi$ is orbitally stable.  It is the fourth condition of the above theorem that generalizes the Vakhitov-Kolokolov slope condition, as we now explain. Suppose the group $G$ is $1$-dimensional, so that $m=1$. Then $W$ is a scalar function of $\xi\in\R\simeq \frak g$. We will see below (See~\eqref{eq:slopetrick}) that
$$
W''(\xi)=-\hat F'(\xi).
$$
Hence the proof of orbital stablity for $u_\xi$ reduces to verifying that the spectral conditions on $\nabla^2_\xi \mathcal{L}_\xi$ are satisfied and notably that dim$E_-=1$, and that
\begin{equation}
 \hat F'(\xi)<0.
\end{equation}
This is the Vakhitov-Kolokov slope condition. In applications to the Schr\"odinger equation, where $F(u)=\frac12\langle u, u\rangle$, it says that the norm of $u_\xi$ decreases as a function of $\xi$. In the case $m=1$, the above result is proven in~\cite{gssI} (Corollary~3.3.1) and in~\cite{stuart08} (Proposition~5.2). 

The setup of the Hamiltonian dynamics with higher dimensional symmetry on a Hilbert space we used in this section is similar to the one proposed in~\cite{gssII} where the decomposition~\eqref{eq:spectraldec} of the bounded self-adjoint operator $\nabla^2\Lcal_{\xi}(u_\xi)$ as well as condition~(iii) of Theorem~\ref{thm:localcoercivity} are also used to obtain a coercivity result of the type~\eqref{eq:localcoercivity}. Nevertheless, in~\cite{gssII} a different constraint surface and orbit are used and some of the arguments provided are incomplete: for a complete comparison between Theorem~\ref{thm:localcoercivity} and the coercivity results of~\cite{gssII}, we refer to Section~\ref{s:comparegss}. 

We finally note that, when $\Ban$ is infinite dimensional, and the equation under study a PDE, the more general formulation of Section~\ref{s:generalcase} is often considerably more pertinent than the simpler Hilbert space formulation proposed here.  Indeed, the operator 
$\nabla^2\mathcal L_\xi(u_\xi)$ introduced in~Theorem~\ref{thm:localcoercivity} is not a partial differential operator (it is bounded) making the analysis of its spectrum generally less convenient than for the operator $\nabla^2\mathcal L_\xi(u_\xi)$ in Theorem~\ref{eq:localcoercivitygen}, which in applications is a self-adjoint partial differential operator on a suitable auxiliary Hilbert space. We will come back to this point when treating examples in Section~\ref{s:examples}. 


\section{Useful lemmas}\label{s:lemmas}
The following lemmas collect some basic properties of the objects introduced above, that are essential in the further analysis of the Hessian of the Lyapunov function. We define, for $\xi\in\Omega$,
\begin{equation}\label{eq:Y}
\Ucalxi=\{\eta\cdot\nabla_\xi u_\xi\in E\mid \eta\in\R^m\},
\end{equation}
where we used the notation
\begin{equation}\label{eq:immersion}
\eta\cdot\nabla_\xi u_\xi:=D_\xi\tilde u(\eta).
\end{equation}
\begin{lemma}\label{lem:transversality}
Let $\Ban$ be a Banach space, $\Omega$ an open set in $\frak g$. Let $\tilde u\in C^1(\Omega\subset\frak g,  \Ban)$.  Let $\xi\in\Omega$ and consider the following statements:
\begin{enumerate}
\item $\hat F$ is a local diffeomorphism;
\item $D_\xi\tilde u$ is injective.
\item
$
\Ucalxi\cap \mathrm{Ker}D_{u_\xi}F=\{0\}.
$
\item  There is a neighbourhood of $u_\xi$ where  the moment map $F$ is regular (\emph{i.e.} $D_{u_\xi}F$ has maximal rank).
\item 
\begin{equation}\label{eq:transversality}
\Ucalxi\oplus \mathrm{Ker}D_{u_\xi}F=\Ban;
\end{equation}
\end{enumerate}
Then $(1) \Leftrightarrow \left( (2) \text{ and } (3) \right) \Leftrightarrow \left((4) \text{ and } (5)\right) $.   
\end{lemma}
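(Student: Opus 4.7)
The whole lemma hinges on one chain rule identity. Since $\hat F = F\circ\tilde u$, we have
\begin{equation}\label{eq:chainrule-plan}
D_\xi \hat F = D_{u_\xi}F\circ D_\xi\tilde u : \R^m\to \R^m,
\end{equation}
and the image of $D_\xi\tilde u$ is precisely $\Ucalxi$. The plan is to read each of the five statements through this factorization and then chase injectivity/surjectivity of a linear map $\R^m\to\R^m$, using the inverse function theorem at one point.

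First I would prove $(1)\Leftrightarrow (2)\wedge(3)$. For $(1)\Rightarrow (2)$, if $D_\xi\tilde u(\eta)=0$ then by \eqref{eq:chainrule-plan} $D_\xi\hat F(\eta)=0$, and invertibility gives $\eta=0$. For $(1)\Rightarrow(3)$, if $D_\xi\tilde u(\eta)\in \Ker D_{u_\xi}F$ then $D_\xi\hat F(\eta)=0$, so $\eta=0$ and hence $D_\xi\tilde u(\eta)=0$. Conversely, assume $(2)$ and $(3)$: if $D_\xi\hat F(\eta)=0$ then $D_\xi\tilde u(\eta)\in\Ucalxi\cap\Ker D_{u_\xi}F=\{0\}$ by $(3)$, and then $(2)$ yields $\eta=0$. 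Thus $D_\xi\hat F$ is an injective endomorphism of $\R^m$, hence an isomorphism, and the inverse function theorem (recall $\tilde u\in C^1$, $F\in C^2$ under Hypothesis~A, so $\hat F\in C^1$) promotes this to $\hat F$ being a local diffeomorphism at $\xi$.

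Next I would prove $(2)\wedge(3)\Leftrightarrow (4)\wedge(5)$. The direction $(4)\wedge(5)\Rightarrow(3)$ is immediate from directness of the sum. For $(4)\wedge(5)\Rightarrow(2)$, the quotient isomorphism $E/\Ker D_{u_\xi}F\simeq \Ucalxi$ combined with maximality of the rank of $D_{u_\xi}F$ gives $\dim\Ucalxi=m$, forcing $D_\xi\tilde u:\R^m\to\Ucalxi$ to be injective on dimensional grounds. For the converse $(2)\wedge(3)\Rightarrow(4)\wedge(5)$, observe that $D_\xi\tilde u$ is a linear isomorphism $\R^m\to\Ucalxi$ by $(2)$, so $\dim\Ucalxi=m$; then $D_{u_\xi}F|_{\Ucalxi}:\Ucalxi\to\R^m$ has trivial kernel by $(3)$ and therefore is an isomorphism. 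This gives surjectivity of $D_{u_\xi}F$, i.e.\ $(4)$ at $u_\xi$ (and hence in a neighbourhood by continuity of $u\mapsto D_uF$). For $(5)$, given $v\in\Ban$ set $\alpha=D_{u_\xi}F(v)\in\R^m$, let $w\in\Ucalxi$ be the unique preimage of $\alpha$ under the isomorphism $D_{u_\xi}F|_{\Ucalxi}$, and write $v=w+(v-w)$ with $v-w\in\Ker D_{u_\xi}F$; uniqueness of the decomposition is exactly $(3)$.

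I do not foresee a genuine obstacle: every implication reduces to finite‑dimensional linear algebra once \eqref{eq:chainrule-plan} is recorded. The only subtlety worth being explicit about is that $E$ may be infinite dimensional, so I will be careful never to invoke dimension counting on $E$ itself; instead I will use the two finite‑dimensional auxiliary objects $\Ucalxi\subset E$ (of dimension $\le m$) and $\mathrm{Range}(D_{u_\xi}F)\subset\R^m$, and transport information between them through $D_{u_\xi}F|_{\Ucalxi}$ and the quotient $E/\Ker D_{u_\xi}F$.
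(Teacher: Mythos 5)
Your proof is correct and follows essentially the same route as the paper: everything is reduced to the chain-rule factorization $D_\xi\hat F=D_{u_\xi}F\circ D_\xi\tilde u$ (the paper's identity~\eqref{eq:diffFdiffFhat}) and finite-dimensional linear algebra on $\Ucalxi$ and $\mathrm{Range}(D_{u_\xi}F)$. You are in fact a bit more explicit than the paper in two places it leaves terse — the appeal to the inverse function theorem to upgrade invertibility of $D_\xi\hat F$ to statement (1), and the implication $(4)\wedge(5)\Rightarrow(2)\wedge(3)$, which the paper dismisses as obvious — but the substance is identical.
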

Note that the lemma does not use the fact that the $u_\xi$ are solutions to the stationary equation: $\tilde u$ takes values in $\Ban$, without further condition. The lemma therefore strings together some useful facts on compositions of maps. 

It is easy to see that, whenever $u_\xi$ is a solution to  \eqref{eq:statequation} for every $\xi\in\Omega$, the map $\tilde u$ is injective provided the $u_\xi$ are regular relative equilibria.  Indeed, if $u_{\xi_1}=u_{\xi_2}$ are both solutions of~\eqref{eq:statequation}, then
$$
(\xi_1-\xi_2)\cdot D_{u_{\xi_1}}F=0.
$$
Hence, if the $u_\xi$ are regular relative equilibria (see Definition~\ref{def:regular}), one has $\xi_1=\xi_2$. 
It is natural in that context to assume it is in fact an immersion, meaning that its derivative is injective, as in condition~(2) of Lemma~\ref{lem:transversality}.  One can then think of $\tilde u(\Omega)$ as an $m$-parameter surface in $\Ban$. In applications, this additional condition often arises naturally from the construction of $\tilde u$, as seen in Section~\ref{s:persistence}. It is also a consequence of assumption (i) in Theorem~3.1, as a result of Lemma~\ref{lem:prophessWhessL}~(1) together with  Lemma~\ref{lem:transversality}~(1).
\begin{proof}  First note that, for all $\eta_1, \eta_2\in\frak g$, 
\begin{equation}\label{eq:diffFdiffFhat}
 D_\xi (\eta_2\cdot \hat F)(\eta_1)= D_{u_\xi}(\eta_2\cdot F) (\eta_1\cdot \nabla_\xi u_\xi).
\end{equation}
{\boldmath $(1) \Rightarrow \left((2) \text{\bf\,and\,} (3)\right)$} Let $\eta_1\in\frak g$ and suppose $\eta_1\cdot\nabla_\xi u_\xi=0$. It follows from~\eqref{eq:diffFdiffFhat} that $D_\xi\hat F(\eta_1)=0$. But since $\hat F$ is a local diffeomorphism at $\xi$, this implies $\eta_1=0$. Hence $D_\xi \tilde u$ is injective, which shows (2). 
To show (3), let $\eta_1\in\frak g$ and suppose $\eta_1\cdot\nabla_\xi u_\xi\in \mathrm{Ker}D_{u_\xi}F$. Then, by definition, $D_{u_\xi}(\eta_2\cdot F) (\eta_1\cdot \nabla_\xi u_\xi)=0$ for all $\eta_2\in\frak g$.  It follows from~\eqref{eq:diffFdiffFhat} that $\eta_1\in \mathrm{Ker} D_\xi\hat F$ so that, by (1), $\eta_1=0$. This proves~(3).

\noindent{\boldmath $\left((2) \text{\bf\,and\,} (3)\right) \Rightarrow (1) $} Let $\eta_1\in \mathrm{Ker}D_\xi\hat F$. Then according to the above equality, $\eta_1\cdot \nabla_\xi u_\xi\in \mathrm{Ker} D_{u_\xi} F$. So, by~(3), $\eta_1\cdot\nabla_\xi u_\xi=0$ and by~(2), $\eta_1=0$. This proves $D_\xi\hat F$ is injective, hence surjective, which proves~(1). \\
{\boldmath $\left((2) \text{\bf\,and\,} (3)\right)\Rightarrow \left((4) \text{\bf\,and\,} (5)\right)$}
According to~(3), the map
$$
D_{u_\xi}F: \Ucal_\xi \to \frak g^*\simeq\R^m
$$ 
is injective. But since by~(2), $D_{\xi}\tilde u$ is injective, the dimension of $\Ucal_\xi$ is $m$. Hence this map is a bijection. The rank of $D_{u_\xi}F$ is therefore maximal. By continuity of $D_uF$ in $u$, this remains true in a neigbhourhood of $u_\xi$, which proves~(4).
It follows from~(4) that locally, $\Sigma_{\mu_\xi}$ is a co-dimension $m$ submanifold of $\Ban$. Since, by definition, $T_{u_\xi}\Sigma_{\mu_\xi}=\mathrm{Ker}D_{u_\xi}F$,  we know from~(3) that  $\Ucalxi\cap T_{u_\xi}\Sigma_{\mu_\xi}=\{0\}$. Since, by~(2), the dimension of $\Ucalxi$ is $m$, (5) follows. \\
{\boldmath $\left((4) \text{\bf\,and\,} (5)\right)\Rightarrow \left((2) \text{\bf\,and\,} (3)\right)$}. This is obvious and concludes the proof of the lemma.
\end{proof}
We introduce
\begin{equation}\label{eq:gxi}
G_\xi=\{g\in G\mid \mathrm{Ad}_g\xi=\xi\},
\end{equation}
which is the subgroup of $G$ for which $\xi$ is a fixed point under the adjoint action. We will write $\frak g_\xi$ for its Lie-algebra. We furthermore need ($u_\xi\in\Dcal$)
\begin{equation}\label{eq:Zksi}
Z_\xi=\{X_\eta(u_\xi)\mid \eta\in\frak g_\xi\}\subset E
\end{equation}
where $X_\eta(u_\xi)$ is defined in~\eqref{eq:xxi}.

\begin{lemma}\label{lem:prophessWhessL}  Suppose Hypotheses A and~C hold.
Let $\xi\in\Omega$. 
Then, one has:
\begin{enumerate}[label=(\arabic*),ref=(\arabic*)]
\item For all $\eta\in\R^m$,
\begin{equation}\label{eq:equivker}
\eta\in W_0=\Ker D_\xi^2 W \Leftrightarrow \eta\in \mathrm{Ker}D_\xi\hat F \Leftrightarrow\eta\cdot \nabla_\xi u_\xi\in \Ker D_{u_\xi}F.
\end{equation}
In particular, $D_\xi^2 W$ is non-degenerate if and only if $\hat F$ is a local diffeomorphism at $\xi$ \label{eq:nondegeneracyW}. 
\item For all $\eta_1, \eta_2\in\R^m$,
\begin{equation}
\label{eq:hessWhessL}
D_{u_\xi}^2\Lcal_\xi(\eta_1\cdot \nabla_\xi u_\xi, \eta_2\cdot\nabla_\xi u_\xi)=-D^2_\xi W(\eta_1, \eta_2).
\end{equation}
\item For all $v\in \Ker D_{u_\xi}F$, for all $\eta\in\frak g$,
\begin{equation}
\label{eq:ortho}
D_{u_\xi}^2\Lcal(\eta\cdot\nabla_\xi u_\xi, v)=0.
\end{equation}
\item $T_{u_\xi}\Ocal_{u_\xi}$ is a subspace of the kernel of $(D^2_{u_\xi}\Lcal_\xi\mid \Ker D_{u_\xi}F)$, which is the restriction of $D^2_{u_\xi}\Lcal_\xi$ to $\Ker D_{u_\xi}F \times \Ker D_{u_\xi}F$ \label{prop:orbitinkernel}.
\item $Z_\xi\subset \mathrm{Ker}(D_{u_\xi}^2\Lcal_\xi)\subset \Ker(D_{u_\xi}F).$
\item $\frak g_\xi\subset \frak g_{\mu_\xi}$.
\end{enumerate}
\end{lemma}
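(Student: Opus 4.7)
\emph{Plan.} The proof decomposes naturally into two groups: items (1)--(3) follow from differentiating the stationary equation~\eqref{eq:statequation} and the function $W$ with respect to the parameter $\xi$, while items (4)--(6) follow from a twisted invariance of $\Lcal_\xi$ under the $G$-action.

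For (1), because $u_\xi$ is a critical point of $\Lcal_\xi$, an envelope-theorem type cancellation eliminates the terms containing $\nabla_\xi u_\xi$ in $D_\xi W(\eta)$, giving the slope identity $D_\xi W = -\hat F(\xi)$ and hence
\begin{equation*}
D^2_\xi W(\eta_1,\eta_2) = -\eta_1\cdot D_\xi\hat F(\eta_2) = -\eta_1\cdot D_{u_\xi}F(\eta_2\cdot\nabla_\xi u_\xi),
\end{equation*}
where the last equality is the chain rule applied to $\hat F(\xi) = F(u_\xi)$. All three kernel characterizations in~\eqref{eq:equivker} are read off directly from this bilinear form; non-degeneracy of $D^2_\xi W$ amounts to injectivity (hence bijectivity) of $D_\xi\hat F:\R^m\to\R^m$, which is exactly the condition that $\hat F$ be a local diffeomorphism at $\xi$. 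For (2) and (3), I would differentiate the stationary equation $D_{u_\xi}H - \xi\cdot D_{u_\xi}F = 0$ in $\xi$ along $\eta$ to obtain the fundamental identity
\begin{equation*}
D^2_{u_\xi}\Lcal_\xi(\eta\cdot\nabla_\xi u_\xi, w) = \eta\cdot D_{u_\xi}F(w)\qquad \forall w\in E.
\end{equation*}
Setting $w = \eta_2\cdot\nabla_\xi u_\xi$ and comparing with the formula above proves (2); restricting $w\in\Ker D_{u_\xi}F$ makes the right-hand side vanish and proves (3).

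For (4), (5) and (6), the central tool is the twisted invariance $\Lcal_\xi\circ\Phi_g = \Lcal_{\mathrm{Ad}_{g^{-1}}\xi}$, itself an immediate consequence of the $G$-invariance of $H$ and the Ad$^*$-equivariance of $F$. Differentiating this identity in $u$ at $u_\xi$ and using the stationary equation yields, for every $g\in G$ and $w\in E$,
\begin{equation*}
D_{\Phi_g(u_\xi)}\Lcal_\xi\bigl(D_{u_\xi}\Phi_g(w)\bigr) = (\xi - \mathrm{Ad}_{g^{-1}}\xi)\cdot D_{u_\xi}F(w).
\end{equation*}
Differentiating once more in $g$ at $g=e$ along $\eta\in\frak g$, and invoking $D_{u_\xi}\Lcal_\xi = 0$ to absorb the cross term, produces the master formula
\begin{equation*}
D^2_{u_\xi}\Lcal_\xi(X_\eta(u_\xi), w) = [\eta,\xi]\cdot D_{u_\xi}F(w).
\end{equation*}
For (4), take $\eta\in\frak g_{\mu_\xi}$: differentiating $F(\Phi_{\exp(t\eta)}(u_\xi)) = \mathrm{Ad}^*_{\exp(t\eta)}\mu_\xi$ at $t=0$ shows $X_\eta(u_\xi)\in\Ker D_{u_\xi}F$, and the master formula then vanishes on any $w\in\Ker D_{u_\xi}F$. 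For the first inclusion of (5), take $\eta\in\frak g_\xi$: then $[\eta,\xi]=0$, so the master formula vanishes for \emph{every} $w$, giving $X_\eta(u_\xi)\in\Ker D^2_{u_\xi}\Lcal_\xi$; the second inclusion is immediate from the fundamental identity of the previous paragraph, since for $v\in\Ker D^2_{u_\xi}\Lcal_\xi$ we have $\eta\cdot D_{u_\xi}F(v) = D^2_{u_\xi}\Lcal_\xi(\eta\cdot\nabla_\xi u_\xi, v) = 0$ for every $\eta\in\R^m$, forcing $v\in\Ker D_{u_\xi}F$. Finally, (6) is a short corollary of (5): for $\eta\in\frak g_\xi$, (5) places $X_\eta(u_\xi)\in\Ker D_{u_\xi}F$, while Ad$^*$-equivariance identifies $D_{u_\xi}F(X_\eta(u_\xi))$ (up to sign) with $\mathrm{ad}^*_\eta\mu_\xi$, so $\mathrm{ad}^*_\eta\mu_\xi=0$ and hence $\eta\in\frak g_{\mu_\xi}$.

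The main obstacle I anticipate is the bookkeeping in the twisted-invariance step: one must simultaneously differentiate $D_{\Phi_g(u_\xi)}\Lcal_\xi$ and $D_{u_\xi}\Phi_g$ as functions of $g$, verify that the stray contribution $D_{u_\xi}\Lcal_\xi(\partial_t D_{u_\xi}\Phi_{\exp(t\eta)}(w)|_{t=0})$ is killed by the stationary equation, and correctly apply the Lie-algebra identity $\frac{d}{dt}|_{t=0}\mathrm{Ad}_{\exp(-t\eta)}\xi = -[\eta,\xi]$ with the sign convention adopted in Section~\ref{s:enmommethod}.
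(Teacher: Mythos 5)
Your proof is correct and follows essentially the same route as the paper: the slope identity $D_\xi W=-\hat F(\xi)$ and the key relation $D^2_{u_\xi}\Lcal_\xi(\eta\cdot\nabla_\xi u_\xi,\cdot)=\eta\cdot D_{u_\xi}F$ for (1)--(3), and the identity $D^2_{u_\xi}\Lcal_\xi(X_\eta(u_\xi),v)=[\eta,\xi]\cdot D_{u_\xi}F(v)$ for (4)--(6). The paper obtains the latter by differentiating $D_{\Phi_g(u_\xi)}(H-\xi\cdot F)=D_{\Phi_g(u_\xi)}((\mathrm{Ad}_g\xi-\xi)\cdot F)$ at $g=e$, which is just a repackaging of your twisted-invariance computation, and the cross term is killed by $D_{u_\xi}\Lcal_\xi=0$ exactly as you anticipate.
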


Note that, combining~\eqref{eq:equivker} with Lemma~\ref{lem:transversality}, we can conclude that the directions $\eta\cdot\nabla_\xi u_\xi$ form a complementary subspace to $ \Ker D_{u_\xi}F$ when $D^2_\xi W$ is non-degenerate. Also, $u_\xi$ is a regular relative equilibrium, and the subspace  $\Ucal_\xi$ is complementary to the tangent space $T_{u_\xi}\Sigma_{\mu_\xi}$.

Expression~\eqref{eq:hessWhessL} is of interest since it identifies part of the Hessian of the Lyapunov function $\calL_\xi$ in terms of the Hessian of $W$. More precisely, it is useful to determine a subspace of negative directions of 
$D^2_{u_\xi}\Lcal_\xi$. Indeed, if $n_+=p(D^2_\xi W)$ and if $\{\eta_1,...,\eta_{n_+}\}$ is a family of linearly independent elements of $\R^m$ such that $\mathrm{span\,}\{\eta_1,...,\eta_{n_+}\}$ is a positive definite subspace for $D^2_\xi W$, then $\mathrm{span\,}\{\eta_1\cdot \nabla_\xi u_\xi,...,\eta_{n_+}\cdot \nabla_\xi u_\xi\}$ is a negative definite subspace for $D^2_{u_\xi}\Lcal_\xi$ (see~\eqref{eq:hessWhessL}). Thus, the dimension of a maximally negative definite subspace for $D^2_{u_\xi}\Lcal_\xi$ in $\Ban$ is at least $p(D^2_\xi W)$:
\begin{equation}\label{eq:nLpW}
n(\HessL_\xi)\geq p(D^2_\xi W).
\end{equation}
 
Expression~\eqref{eq:ortho} turns out to be crucial in what follows: it expresses the fact that $\mathcal U_\xi=
\{\eta\cdot\nabla_\xi u_\xi\mid \eta\in\R^m\}$
is $D^2_{u_\xi}\Lcal_\xi$-orthogonal to $\Ker D_{u_\xi}F$.
\begin{proof}
First of all, note that, since $u_{\xi'}$ is a solution to the stationary equation \eqref{eq:statequation} for all $\xi'\in \Omega$, for all $\eta \in \R^m$
\begin{equation}
D_\xi W(\eta)=-\eta\cdot F(u_\xi).
\end{equation}
Then a straightforward calculation gives, for all $\eta_1, \eta_2\in\R^m$, 
\begin{equation}\label{eq:hessWdiffF}
D_\xi^2W(\eta_1,\eta_2)=-D_{u_\xi}\eta_1\cdot F(\eta_2\cdot\nabla_\xi u_\xi).
\end{equation}
In other words, 
\begin{equation}\label{eq:slopetrick}
D_\xi^2W=-D_\xi\hat F.
\end{equation}
Note that, as $\hat F$ is a map from $\R^m\simeq \frak g$ to $\R^m\simeq\frak g^*$, $D_\xi\hat F$ is linear from $\R^m=\frak g$ to $\R^{m}=\frak g^*$. It therefore naturally defines a bilinear map on $\R^m\simeq \frak g$. In our notation here, we identify $\frak g$ with $\frak g^*$ using an Euclidean structure, but even without the latter, the above is natural. 

The first statement of \eqref{eq:equivker} is now obvious and for the second, note that $\eta\cdot\nabla_\xi u_\xi\in \mathrm{Ker}\,D_{u_\xi}F$ if and only if, for all $\eta'\in \R^m$,
$D_{u_\xi}\eta'\cdot F(\eta\cdot\nabla_\xi u_\xi)=0$, which yields the conclusion, thanks to \eqref{eq:diffFdiffFhat} and \eqref{eq:hessWdiffF}.

To obtain~\eqref{eq:hessWhessL}, it is sufficient to take the derivative of the stationary equation~\eqref{eq:statequation} with respect to $\xi\in\frak g$ and to use~\eqref{eq:hessWdiffF}. More precisely, by taking this derivative with respect to $\xi$ in the direction $\eta$, we obtain for all $\eta\in\frak g$, 
\begin{equation}\label{eq:hessX*}
D_{u_\xi}^2\Lcal_\xi(\eta\cdot \nabla_\xi u_\xi)=D_{u_\xi} \eta\cdot  F\in \Ban^*.
\end{equation}
Hence, using~\eqref{eq:hessWdiffF},
\[
D_{u_\xi}^2\Lcal_\xi(\eta_1\cdot \nabla_\xi u_\xi,\eta_2\cdot \nabla_\xi u_\xi)=D_{u_\xi} \eta_1\cdot  F(\eta_2\cdot \nabla_\xi u_\xi)=-D_\xi^2W(\eta_1,\eta_2).
\] 
Next, \eqref{eq:ortho} follows directly from \eqref{eq:hessX*}. Indeed, for all $v\in \Ker D_{u_\xi}F$ and for all $\eta\in\frak g$, $D_{u_\xi}^2\Lcal_\xi(\eta\cdot \nabla_\xi u_\xi,v)=D_{u_\xi} \eta\cdot  F(v)=0$.

From $F=(F\circ \Phi_{g^{-1}})\circ \Phi_g$ and $H=H\circ \Phi_g$ one finds, for all $u\in \Ban, g\in G$,
$$
D_u F=\left(D_{\Phi_g(u)}(F\circ\Phi_{g^{-1}})\right)D_u\Phi_g, \quad D_uH=D_{\Phi_g(u)}H D_u\Phi_g.
$$
Hence, by~\eqref{eq:statequation},
\begin{align*}
D_{\Phi_g(u_\xi)}H =&\, \xi\cdot D_{u_\xi}F\left(D_{u_\xi}\Phi_g\right)^{-1}=D_{\Phi_g(u_\xi)}(\xi\cdot F\circ\Phi_{g^{-1}})\\
=&\,D_{\Phi_g(u_\xi)}(Ad_g\xi\cdot F)
\end{align*}
and therefore
\begin{equation*}
D_{\Phi_g(u_\xi)}\left(H-\xi\cdot F\right) = D_{\Phi_g(u_\xi)}((Ad_g\xi-\xi)\cdot F).
\end{equation*}
Now let $\eta\in\frak g$, consider $g=\exp(t\eta)$ and take the derivative at $t=0$ in the previous relation. One finds, for all $v\in \Ban$,
\begin{equation}
D^2_{u_\xi}(H-\xi\cdot F)(X_\eta (u_\xi), v)=D_{u_\xi}[\eta,\xi]\cdot F(v).
\end{equation}
Taking $v\in \Ker D_{u_\xi}F$, the right hand side above vanishes for any $\eta\in \frak g$, and one finds \ref{prop:orbitinkernel} follows. 
 
To prove~(5), note that, taking $\eta\in \frak g_\xi$ so that $[\eta, \xi]=0$, we see that $Z_\xi\subset \Ker\HessL_\xi$. Finally, let $v\in\Ban$, then~\eqref{eq:hessX*} yields, for all $\eta\in\frak g$
$$
D_{u_\xi}^2\Lcal_\xi(\eta\cdot \nabla_\xi u_\xi, v)=\eta\cdot D_{u_\xi}F(v).
$$
Hence, if $v\in\Ker(\HessL_\xi)$, it follows that $v\in \Ker(D_{u_\xi}F)$. 

To prove~(6) note first that~(5) implies that, if $\eta\in \frak g_\xi$, we have 
$$
0=D_{u_\xi}F(X_\eta(u_\xi))=\frac{\rd}{\rd t} F\circ\Phi_{\exp(\eta t)}(u_\xi)_{\mid t=0}=\frac{\rd}{\rd t} {\mathrm Ad}^*_{\exp(\eta t)}F(u_\xi)_{\mid t=0}=\mathrm{ad}^*_\eta\mu_\xi.
$$
It follows that $\eta \in \frak g_{\mu_\xi}$. Hence $\frak g_\xi\subset \frak g_{\mu_\xi}$.
\end{proof}

Finally, we state some properties of symmetric bilinear forms and their associated quadratic forms in the form of a short lemma. In what follows, if $B$ is a bilinear form on some vectorspace $\Ban$, and $\mathcal Y$ is a subspace of $\Ban$, then we write $(B\mid \mathcal Y)$ for the restriction of $B$ to $\mathcal Y\times\mathcal Y$.
\begin{lemma}\label{lem:Borthopositive} Let $\Ban$ be a vector space and $B$ a symmetric bilinear form on $\Ban$. 
\begin{enumerate}[label=(\roman*)]
\item Let $\mathcal X_-$ be a maximally negative definite subspace for $B$ in $\Ban$. Suppose $\mathcal Y\subset \Ban$ is a subspace of $\Ban$ with the property that $\mathcal X_-\cap \mathcal Y=\{0\}$ and such that $B(\mathcal X_-, \mathcal Y)=0$. Then $\mathcal Y$ is a positive subspace for $B$. 
\item Let $\Ycal_1, \Ycal_2$ be two subspaces of $\Ycal$, such that $B(\mathcal Y_1, \Ycal_2)=0$. Then
\begin{equation}\label{eq:kernelsum}
\Ker (B\mid \Ycal_1+\Ycal_2)=\Ker (B\mid \Ycal_1) +\Ker(B\mid \Ycal_2).
\end{equation}
\item Let $\mathcal Y$ be a positive subspace for $B$. Suppose $u\in\mathcal Y$ satisfies $B(u,u)=0$. Then $u\in \mathrm{Ker} (B\mid \mathcal Y)$. 
\end{enumerate}
\end{lemma}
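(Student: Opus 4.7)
The plan is to prove each of the three items separately: they are self-contained algebraic statements about a symmetric bilinear form on an abstract vector space, requiring no analytic input and relying only on the maximality hypothesis in~(i), the bilinear expansion in~(ii), and the Cauchy--Schwarz-type trick in~(iii).

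For~(i), I would argue by contradiction. Suppose some $y\in\Ycal$ satisfies $B(y,y)<0$; then $y\notin\Xcal_-$ because $\Xcal_-\cap\Ycal=\{0\}$ forces $y=0$ otherwise, and $y=0$ is incompatible with $B(y,y)<0$. Hence $\Xcal_-+\mathrm{span}(y)$ is an honest direct sum strictly containing $\Xcal_-$. For $x\in\Xcal_-$ and $t\in\RR$, the hypothesis $B(\Xcal_-,\Ycal)=0$ kills the cross-term and gives $B(x+ty,x+ty)=B(x,x)+t^2B(y,y)$, which is a sum of two non-positive numbers vanishing only when $x=0$ and $t=0$. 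Thus $\Xcal_-+\mathrm{span}(y)$ is a negative definite subspace strictly larger than $\Xcal_-$, contradicting maximality.

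For~(ii), the inclusion $\Ker(B|_{\Ycal_1})+\Ker(B|_{\Ycal_2})\subset\Ker(B|_{\Ycal_1+\Ycal_2})$ is immediate: given $u_i\in\Ker(B|_{\Ycal_i})$ and a test element $v_1+v_2$ with $v_i\in\Ycal_i$, the four cross-terms in the expansion of $B(u_1+u_2,v_1+v_2)$ all vanish, two by definition of the respective kernels and two by $B(\Ycal_1,\Ycal_2)=0$. Conversely, if $u=u_1+u_2\in\Ker(B|_{\Ycal_1+\Ycal_2})$ with $u_i\in\Ycal_i$, pairing with an arbitrary $v_1\in\Ycal_1$ gives $0=B(u,v_1)=B(u_1,v_1)+B(u_2,v_1)=B(u_1,v_1)$, so $u_1\in\Ker(B|_{\Ycal_1})$, and symmetrically for $u_2$. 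Item~(iii) follows from the standard Cauchy--Schwarz argument: for any $v\in\Ycal$ and $t\in\RR$, expand $0\leq B(u+tv,u+tv)=2tB(u,v)+t^2B(v,v)$; if $B(u,v)\neq 0$, choosing $t$ of the sign opposite to $B(u,v)$ and of sufficiently small absolute value makes the right-hand side negative, a contradiction, so $B(u,v)=0$ for every $v\in\Ycal$.

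None of the three steps presents a genuine obstacle; each is a one- or two-line argument. The only place where I would be slightly careful is~(i), to check that the enlarged subspace is negative definite on \emph{all} of itself and not merely on each summand separately. The hypothesis $B(\Xcal_-,\Ycal)=0$ precisely eliminates the cross-term in $B(x+ty,x+ty)$, so this potential subtlety resolves itself and no auxiliary structure on $\Ban$ (inner product, topology, or finite dimensionality) is needed at any point.
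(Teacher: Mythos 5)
Your proof is correct and follows essentially the same route as the paper: the same maximality contradiction for (i), the same bilinear expansion for (ii) (which the paper dismisses as ``immediate''), and the same Cauchy--Schwarz-type expansion for (iii). No gaps.
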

We say $\mathcal Y$ is a positive subspace for $B$ if for all $y\in\mathcal Y$, $B(y,y)\geq 0$. Note that the $B$-orthogonality of the subspaces is crucial in parts~(i) and~(ii). 
\begin{proof} (i) Suppose the statement is false, then there exists $y\in\mathcal Y$, so that $B(y,y)<0$. Clearly, $y\not=0$ and hence, by assumption, $y\not\in\mathcal X_-$. Now consider $\mathcal Z=\mathrm{span}\{y, \mathcal X_-\}$. Let $0\not=z\in \mathcal Z$. Then, there exist $\lambda\in\R$ and $z_-\in\mathcal X_-$, not both zero, so that  $z=\lambda y+z_-$. It follows from the $B$ orthogonality of $\mathcal X_-$ and $\mathcal Y$ that
$$
B(z,z)=\lambda^2B(y,y)+B(z_-, z_-)<0.
$$
Hence $B$ is negative definite on $\mathcal Z$. Since dim$\mathcal X_- \subsetneq \mathcal Z$ this is a contradiction. \\
(ii) Immediate.\\
(iii) One has, for all $v\in \mathcal Y$, and for all $\lambda\in\R$, 
$$
0\leq B(\lambda u+ v, \lambda u+v)=2\lambda B(u,v) +B(v,v).
$$
If $B(u,v)\not=0$, this is a contradiction. 
\end{proof}

\section{Proof of Theorem~\ref{thm:localcoercivity}}\label{s:proofmaintheorem}
Let $n_-=\dim\Ban_-$ and $\{\eta_1,...,\eta_{n_-}\}\subset \R^m$ a family of linearly independent elements of $\R^m$ such that $D^2_\xi W(\eta,\eta)>0$ for all $\eta\in \mathrm{span}\{\eta_1,...,\eta_{n_-}\}$. As a consequence of \eqref{eq:hessWhessL}, $\Xcal_-:={\rm span}\,\left\{\eta_1\cdot \nabla_\xi u_\xi,..., \eta_{n_-}\cdot \nabla_\xi u_\xi\right\}$ is a negative definite subspace for $D^2_{u_\xi}\Lcal_\xi$ in $\Ban$. Moreover, since $\dim \Xcal_-=n_-$, $\Xcal_-$ is a maximally negative definite subspace. 

Next, 
since $D^2_\xi W$ is non-degenerate by hypothesis, Lemma~\ref{lem:prophessWhessL}~\ref{eq:nondegeneracyW} implies that $\hat F$ is a local diffeomorphism. Hence, thanks to Lemma~\ref{lem:transversality}, $\Xcal_-\cap  T_{u_\xi}\Sigma_{\mu_\xi}=\{0\}$. Moreover, thanks to \eqref{eq:ortho}, $\Xcal_-$ and $T_{u_\xi}\Sigma_{\mu_\xi}$ are $D^2_{u_\xi}\Lcal_{\xi}$-orthogonal. As a consequence, we can apply Lemma~\ref{lem:Borthopositive}~(i) and conclude that $T_{u_\xi}\Sigma_{\mu_\xi}$ is a positive subspace for $D^2_{u_\xi}\Lcal_\xi$. 

Furthermore, since by hypothesis~(ii) of the theorem, $E_0=\Ker D^2_{u_\xi}\Lcal_\xi= T_{u_\xi}\Ocal_{u_\xi}$, it follows from Lemma~\ref{lem:Borthopositive}~(ii)-(iii) that
\begin{equation}\label{eq:Yspacesimple}
\Ycal:=T_{u_\xi}\Sigma_{\mu_\xi} \cap \left(T_{u_\xi}\Ocal_{u_\xi}\right)^{\perp}
\end{equation} 
is a positive definite subspace for $D^2_{u_\xi}\Lcal_\xi$, meaning that
\begin{equation}\label{eq:strictpos}
D^2_{u_\xi}\Lcal_\xi(v,v)>0, \quad \forall v\in \mathcal Y= T_{u_\xi}\Sigma_{\mu_\xi} \cap \left(T_{u_\xi}\Ocal_{u_\xi}\right)^{\perp}, v\neq 0.
\end{equation}
Here, and in the rest of the proof, the $\perp$ denotes orthogonality with respect to the inner product $\scalar_E$.

To obtain the desired coercive bound, we now use the spectral information on $\nabla^2\Lcal_{\xi}(u_{\xi})$ provided by the hypotheses of the theorem. Note first that, since $\nabla^2\Lcal_{\xi}(u_\xi)$ is self-adjoint, $E_0^{\perp}=\left(T_{u_\xi}\Ocal_{u_\xi}\right)^{\perp}\subset \Ban$ is an invariant subspace for $\nabla^2\Lcal_{\xi}(u_\xi)$: it is the spectral subspace of $\nabla^2\Lcal_{\xi}(u_\xi)$ corresponding to $\R\setminus\{0\}$. 

Let $\nabla F_j(u_\xi)\in\Ban$ be defined by $D_{u_\xi} F_j(v)=\scalbb{\nabla F_j(u_\xi)}{v}$ for $j=1,...,m$. Then, since $\mu_\xi$ is a regular value,
$$
\left(T_{u_\xi}\Sigma_{\mu_\xi}\right)^{\perp}={\rm span}\,\{\nabla F_j(u_\xi)\}_{j=1,...,m}.
$$
and $\dim \left(T_{u_\xi}\Sigma_{\mu_\xi}\right)^{\perp}=m$. Moreover, since $T_{u_\xi}\Ocal_{u_\xi}\subset T_{u_\xi}\Sigma_{\mu_\xi}$, one obtains the following orthogonal decomposition of $E_0^\perp$: 
$$
E_0^{\perp}=\left(T_{u_\xi}\Ocal_{u_\xi}\right)^{\perp}=\Ycal\oplus \left(T_{u_\xi}\Sigma_{\mu_\xi}\right)^{\perp}.
$$
Note that $\nabla^2\Lcal_{\xi}(u_{\xi})$ does not leave this decomposition invariant: we are interested in controlling it on $\mathcal Y$. 

For that purpose, let $P$ be the projection on $\Ycal\subset E_0^\perp$ and consider the following decomposition of the restriction of the operator $\nabla^2\Lcal_{\xi}(u_\xi)$ to $E_0^{\perp}$,
\begin{align*}
\nabla^2\Lcal_{\xi}(u_\xi)_{\mid E_0^{\perp}}=& P\nabla^2\Lcal_{\xi}(u_\xi)P+P\nabla^2\Lcal_{\xi}(u_\xi)(I_{E_0^\perp}-P)\\
&+(I_{E_0^\perp}-P)\nabla^2\Lcal_{\xi}(u_\xi)P+(I_{E_0^\perp}-P)\nabla^2\Lcal_{\xi}(u_\xi)(I_{E_0^\perp}-P).
\end{align*}
Since $\left(T_{u_\xi}\Sigma_{\mu_\xi}\right)^{\perp}$ is finite dimensional, the projector $I_{E_0^\perp}-P$ is finite rank. Hence $\nabla^2\Lcal_{\xi}(u_\xi)_{\mid E_0^{\perp}}= P\nabla^2\Lcal_{\xi}(u_\xi)P+K$ with $K$ a compact operator and it follows that $\sigma_{\rm ess}(P\nabla^2\Lcal_{\xi}(u_\xi)P)=\sigma_{\rm ess}\left(\nabla^2\Lcal_{\xi}(u_\xi)_{\mid E_0^{\perp}}\right)$. Here $\sigma_{\rm ess}(\cdot)$ designates the essential spectrum. 
In particular, $0\not\in\sigma_{\rm ess}(P\nabla^2\Lcal_{\xi}(u_\xi)P)$ by hypotheses $(iii)$ and $ (iv)$ of Theorem~\ref{thm:localcoercivity}.

Now, note that Ker$P\nabla^2\Lcal_{\xi}(u_\xi)P=\left(T_{u_\xi}\Sigma_{\mu_\xi}\right)^{\perp}\subset E_0^\perp$. Indeed, let $u\in E_0^\perp$ be such that $P\nabla^2\Lcal_{\xi}(u_\xi)Pu=0$. Then
$$
D^2_{u_\xi}\mathcal L(u_\xi) (Pu, Pu)=\langle u, P\nabla^2\Lcal_{\xi}(u_\xi)Pu\rangle_\Ban=0.
$$
Since $D^2_{u_\xi}\mathcal L(u_\xi)$ is strictly positive on $\mathcal Y$ (see~\eqref{eq:strictpos}), it follows that $Pu=0$, which means that $u\in \left(T_{u_\xi}\Sigma_{\mu_\xi}\right)^{\perp}$. 

We now consider $P\nabla^2\Lcal_{\xi}(u_\xi)P$ as an operator on $\mathcal Y$. We have just shown that $0$ is not an eigenvalue of $P\nabla^2\Lcal_{\xi}(u_\xi)P$, nor in its essential spectrum. It is therefore not in its spectrum. In addition, we showed $P\nabla^2\Lcal_{\xi}(u_\xi)P$ is a strictly positive operator on $\mathcal Y$ (see~\eqref{eq:strictpos}). It therefore has a spectral gap:
$$
\delta=\inf\limits_{v\in\Ycal\smallsetminus \{0\}}\frac{\scal{P\nabla^2\Lcal_{\xi}(u_\xi)Pv}{v}_\Ban}{\scal{v}{v}_\Ban}>0.
$$ 
Finally, for all $v\in\mathcal Y$, $v\not=0$, one finds
\[
D^2_{u_\xi}\Lcal_\xi(v,v)=\scal{\nabla^2\Lcal_{\xi}(u_\xi)v}{v}_\Ban=\scal{P\nabla^2\Lcal_{\xi}(u_\xi)Pv}{v}_\Ban\ge \delta \|v\|_\Ban^2
\]
which is the desired estimate.


\section{Main result: a more general setting}\label{s:generalcase}

In this section, we extend Theorem~\ref{thm:localcoercivity} to a more general setting that we now describe.
In order to state our main result, we first need to associate to $D^2_{u_\xi}\Lcal_\xi$ a (typically unbounded) self-adjoint operator on $\hat \Ban$. 
\begin{lemma}\label{lem:existenceHessian} 
Suppose Hypothesis~A holds. Let $\xi\in \frak g$ and $\calL_{\xi}$ as in~\eqref{eq:lyapfunction} and let $u\in\Ban$. If there exist $\varepsilon, C>0$ such that  
		\begin{equation}
			\label{eq:condGgen}
			D^2_{u}\calL_{\xi}(v,v)\ge \varepsilon \|v\|^2_{\Ban}-C\|v\|^2
		\end{equation}
		for all $v\in \Ban$, then there is a self-adjoint operator $\nabla^2\calL_\xi (u):\Dcal(\nabla^2\mathcal L_\xi(u))\subset \hat\Ban\to \hat\Ban$ defined by 
		\begin{equation}
			\label{eq:defHessiangen}
			\begin{aligned}
			&\Dcal(\nabla^2\mathcal L_\xi(u))=\{z\in \Ban\mid \exists w\in \hat\Ban\text{ such that } D^2_{u}\calL_{\xi}(z,v)=\scal{w}{v}\text{ for all }v\in \Ban\},\\
			&\nabla^2\calL_\xi (u)z=w\text{ for all } z\in \Dcal(\nabla^2\mathcal L_\xi(u)) . 
			\end{aligned}
		\end{equation}
		Moreover, $\DomH$ is a form core for $D^2_{u}\calL_{\xi}$.
\end{lemma}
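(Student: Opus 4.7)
The plan is to realize $\nabla^2\mathcal L_\xi(u)$ as the self-adjoint operator canonically associated, via the first representation theorem of Kato--Friedrichs for closed semibounded symmetric forms, to the bilinear form $D^2_u\mathcal L_\xi$ promoted to a form on the Hilbert space $\hat E$. Accordingly the strategy is a standard form-theoretic construction, and the only non-trivial point is to verify that the natural form domain of the closure coincides with $E$ itself.

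First I would observe that, since $\mathcal L_\xi\in C^2(E,\R)$ by Hypothesis~A, the bilinear form $D^2_u\mathcal L_\xi$ is bounded on $E\times E$: there exists $M>0$ with $|D^2_u\mathcal L_\xi(v,w)|\le M\|v\|_E\|w\|_E$. Combining this upper bound with the assumed lower bound~\eqref{eq:condGgen} and the inequality $\|v\|\le\|v\|_E$ (a consequence of~\eqref{eq:scalprodbis}), the shifted form
\begin{equation*}
q(v,w):=D^2_u\mathcal L_\xi(v,w)+(C+1)\langle v,w\rangle
\end{equation*}
satisfies $\varepsilon\|v\|_E^2\le q(v,v)\le (M+C+1)\|v\|_E^2$ for all $v\in E$. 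Hence the form norm $\sqrt{q(\cdot,\cdot)}$ is equivalent to $\|\cdot\|_E$, and in particular $E$ is complete in it.

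Next I would view $q$ as a densely defined nonnegative quadratic form on $\hat E$ with form domain $E$, and verify that it is closed. This reduces to a routine check: if $(v_n)\subset E$ is $q$-Cauchy and $v_n\to 0$ in $\hat E$, the norm equivalence forces $(v_n)$ to converge in $\|\cdot\|_E$ to some $v\in E$; the continuous embedding $E\hookrightarrow\hat E$ then forces $v=0$, and continuity of $q$ on $E\times E$ gives $q(v_n,v_n)\to 0$. The first representation theorem then provides a unique nonnegative self-adjoint operator $T$ on $\hat E$ whose domain consists precisely of those $z\in E$ for which $v\mapsto q(z,v)$ extends to a $\|\cdot\|$-continuous functional on $\hat E$, with $Tz$ its Riesz representative. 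Setting $\nabla^2\mathcal L_\xi(u):=T-(C+1)\mathrm{Id}$ yields an operator whose domain and action match~\eqref{eq:defHessiangen} verbatim; the form-core statement is a standard corollary, since the operator domain of a closed semibounded form is always dense in the form domain with respect to the form norm.

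The step that would normally be most delicate --- namely identifying the abstract form domain produced by the closure with $E$ itself --- is here trivialized by the norm equivalence of the first step. In the concrete PDE applications of Section~\ref{s:examples} this would correspond to an elliptic-regularity statement; in the abstract setting it follows for free, and once this identification is in hand the representation theorem delivers existence, self-adjointness and the form-core property at one stroke.
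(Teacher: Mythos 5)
Your proof is correct and follows essentially the same route as the paper, which simply invokes Kato's First Representation Theorem and defers the verification of its hypotheses (via the shift by $C$ and the equivalence of the form norm with $\|\cdot\|_\Ban$ guaranteed by~\eqref{eq:condGgen} and the $C^2$ regularity in Hypothesis~A) to the cited reference of Stuart. You have merely written out explicitly the verification that the paper delegates, including the closedness of the shifted form and the form-core property, and these details check out.
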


\begin{remark} Note that 
\begin{enumerate}
\item $\Ban$ is the form domain of the operator $\nabla^2\calL_\xi (u)$,
\item Since $\Dcal(\nabla^2\mathcal L_\xi(u))$ is a form core for $D^2_{u}\calL_{\xi}$ and condition \eqref{eq:condGgen} holds,  $\Dcal(\nabla^2\mathcal L_\xi(u))$ is dense in $\Ban$ (see \cite[Chapter VI]{kato1980} for details). 
\end{enumerate}
\end{remark}
The existence and the uniqueness of the operator $\nabla^2\calL_\xi (u)$ is a consequence of the First Representation theorem in Kato \cite[Theorem 2.1 and 2.6 in Chapter VI]{kato1980}. Condition~\eqref{eq:condGgen}  ensures that the hypotheses of the First Representation theorem are satisfied (see \cite[Lemma 3.3]{stuart08}). See also \cite{reesim1980,teschl2009} for more details.

We can now state our main result. We define $p(D^2_\xi W)$, $n(D^2_\xi W)$, $W$, and $n(D^2_{u_\xi}\Lcal_\xi)$ as in Section~\ref{s:maintheoremhil}.
\begin{theorem}
	\label{thm:localcoercivitygen}
	Suppose Hypotheses~A,B,C hold. Let $\xi\in\Omega$ and suppose 
	\begin{equation}\label{eq:condGgen2}
	D^2_{u_\xi}\calL_{\xi}(v,v)\ge \varepsilon \|v\|^2_{\Ban}-C\|v\|^2, \forall v\in\Ban.
	\end{equation}
	Suppose in addition that
	\begin{enumerate}[label=(\roman*)]
		\item $D^2_\xi W$ is non-degenerate,
		\item $\Ker D^2_{u_\xi}\calL_\xi = T_{u_\xi}\Ocal_{u_\xi}$, \label{hyp:kernel}
		\item $\inf (\sigma(\nabla^2\calL_\xi (u_\xi))\cap (0,+\infty))>0$,
		\item $p(D^2_\xi W)=n(D^2_{u_\xi}\Lcal_\xi)$,
		\item for all $u\in \Ocal_{u_\xi}$ and for all $j=1,...,m$, there exists $\nabla F_j (u)\in \DomH\subset \Ban$ such that
\begin{equation}
	\label{eq:defgradient}
	D_{u} F_j(w)=\scal{\nabla F_j (u)}{w},\quad \forall w\in \Ban.
\end{equation}
	\end{enumerate}
	Then there exists $\delta>0$ such that
	\begin{equation}
		\label{eq:localcoercivitygen}
		\forall v\in T_{u_\xi}\Sigma_{\mu_\xi}\cap \left(T_{u_\xi}\Ocal_{u_\xi}\right)^{\perp},\ D^2_{u_\xi}\calL_{\xi}(v,v)\ge \delta \| v\|^2_\Ban
	\end{equation}
	with $ \left(T_{u_\xi}\Ocal_{u_\xi}\right)^{\perp}=\{v\in \hat\Ban\,|\,\scal{v}{w}=0\  \forall w\in T_{u_\xi}\Ocal_{u_\xi}\}$.
\end{theorem}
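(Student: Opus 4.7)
The plan is to mimic the proof of Theorem~\ref{thm:localcoercivity} step by step, working throughout at the level of quadratic forms, since the self-adjoint operator $\nabla^2\Lcal_\xi(u_\xi)$ is now unbounded on $\hat\Ban$ with form domain $\Ban$. The algebraic part (steps producing a maximally negative-definite subspace and establishing strict positivity on $\Ycal$) transfers verbatim. Indeed, hypothesis~(i) together with Lemma~\ref{lem:prophessWhessL}~\ref{eq:nondegeneracyW} makes $\hat F$ a local diffeomorphism at $\xi$, so by Lemma~\ref{lem:transversality} the space $\Ucalxi$ has dimension $m$ and is transverse to $\Ker D_{u_\xi}F = T_{u_\xi}\Sigma_{\mu_\xi}$. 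Using~\eqref{eq:hessWhessL} and hypothesis~(iv), one constructs inside $\Ucalxi$ a maximally negative-definite subspace $\Xcal_-$ of dimension $n_-:=n(D^2_{u_\xi}\Lcal_\xi)=p(D^2_\xi W)$, which by~\eqref{eq:ortho} is $D^2_{u_\xi}\Lcal_\xi$-orthogonal to $T_{u_\xi}\Sigma_{\mu_\xi}$. Lemma~\ref{lem:Borthopositive}~(i) then renders $T_{u_\xi}\Sigma_{\mu_\xi}$ positive for $D^2_{u_\xi}\Lcal_\xi$, and combined with hypothesis~(ii) and Lemma~\ref{lem:Borthopositive}~(ii)--(iii) yields strict positivity of the form on $\Ycal=T_{u_\xi}\Sigma_{\mu_\xi}\cap (T_{u_\xi}\Ocal_{u_\xi})^\perp$.

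The main work is to upgrade strict positivity into the quantitative coercive estimate~\eqref{eq:localcoercivitygen}. I first obtain a bound in the weak norm $\|\cdot\|$, then interpolate to $\|\cdot\|_\Ban$ using~\eqref{eq:condGgen2}. For the weak-norm bound, hypothesis~(v) is crucial: it ensures that $N:=(T_{u_\xi}\Sigma_{\mu_\xi})^\perp=\mathrm{span}\{\nabla F_j(u_\xi)\}_{j=1,\dots,m}$ lies in $\DomH\subset \Ban$. Together with the already-known inclusion $T_{u_\xi}\Ocal_{u_\xi}\subset \Ban$, this means the $\hat\Ban$-orthogonal projection $P:\hat\Ban\to\overline{\Ycal}^{\|\cdot\|}$ preserves the form domain $\Ban$ and the operator domain. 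Consequently the restricted form $q(v,v):=D^2_{u_\xi}\Lcal_\xi(v,v)$ on $\Ycal$ is a closed semibounded form on the Hilbert space $\overline\Ycal^{\|\cdot\|}$, defining a self-adjoint operator $A_{\Ycal}$.

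The spectral analysis of $A_{\Ycal}$ then proceeds as follows. By hypotheses~(ii)--(iv), $\sigma(\nabla^2\Lcal_\xi(u_\xi))\subset (-\infty,0]\cup[\alpha,+\infty)$ for some $\alpha>0$, with $0$ an isolated eigenvalue whose eigenspace is $T_{u_\xi}\Ocal_{u_\xi}$ and with finite-dimensional negative spectral subspace $\Ban_-$. Since $\Ycal$ differs from the invariant subspace $E_0^\perp$ only by the finite-dimensional piece $N$ that lies in the operator domain, a Weyl-type argument (resolvent difference of finite rank) gives
$$\sigma_{\mathrm{ess}}(A_{\Ycal})\subset \sigma_{\mathrm{ess}}(\nabla^2\Lcal_\xi(u_\xi))\cap [\alpha,+\infty).$$
The only possible spectrum of $A_{\Ycal}$ in $[0,\alpha)$ is therefore discrete eigenvalues of finite multiplicity; strict positivity of $q$ on $\Ycal$ rules out $0$ as such an eigenvalue (a null eigenvector $y$ would satisfy $q(y,y)=\scal{A_{\Ycal}y}{y}=0$). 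Hence $\delta_0:=\inf\sigma(A_{\Ycal})>0$, yielding $D^2_{u_\xi}\Lcal_\xi(v,v)\ge \delta_0\|v\|^2$ for all $v\in\Ycal$.

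The norm exchange is then immediate: for any $\theta\in(0,1)$ and $v\in\Ycal$, combining the previous bound with~\eqref{eq:condGgen2} gives
$$D^2_{u_\xi}\Lcal_\xi(v,v)\ge \theta\delta_0\|v\|^2+(1-\theta)\bigl(\varepsilon\|v\|_\Ban^2-C\|v\|^2\bigr),$$
and choosing $\theta$ close enough to $1$ that $\theta\delta_0>(1-\theta)C$ produces~\eqref{eq:localcoercivitygen} with $\delta=(1-\theta)\varepsilon$. The delicate point, where care is most required, is the Weyl-type invariance of the essential spectrum when passing from $\nabla^2\Lcal_\xi(u_\xi)|_{E_0^\perp}$ to $A_{\Ycal}$: unlike the bounded setting of Section~\ref{s:proofmaintheorem}, here one must argue at the level of forms or, equivalently, compare resolvents; hypothesis~(v) is precisely what makes this rigorous by placing the finite-rank perturbation inside the operator domain.
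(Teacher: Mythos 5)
Your proposal is correct and takes essentially the same route as the paper's proof: the same algebraic reduction (via Lemmas~\ref{lem:transversality}, \ref{lem:prophessWhessL} and~\ref{lem:Borthopositive}) to strict positivity of $D^2_{u_\xi}\Lcal_\xi$ on $\Ycal$, the same finite-rank perturbation argument for the essential spectrum made possible by hypothesis~(v), the same exclusion of $0$ from the spectrum of the projected operator using strict positivity, and the same interpolation between $\|\cdot\|$ and $\|\cdot\|_\Ban$ via~\eqref{eq:condGgen2} at the end. The only steps you gloss over, which the paper handles through Lemmas~\ref{lem:densityortho} and~\ref{lem:propdom}, are the identification $\overline{\Ycal}^{\|\cdot\|}=E_0^\perp\cap V_{u_\xi}^\perp$ (needed so that $\Ycal$ really does differ from $E_0^\perp$ only by the finite-dimensional $V_{u_\xi}$) and the fact that the projection $P$ maps $\DomH\cap E_0^\perp$ into $\Ycal$; both are true and routine given hypothesis~(v).
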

Note that here, and in the rest of this section, the orthogonality is with respect to the inner product $\langle\cdot,\cdot\rangle$.

Let us point out that the hypotheses on the bilinear form $D^2_{u_\xi}\calL_\xi $ in Theorem \ref{thm:localcoercivitygen} can be re-expressed in terms of spectral hypotheses on the (unbounded) self-adjoint operator $\nabla^2\calL_\xi (u_\xi)$, as shown in the following lemma. This is important in applications, since it allows one to use the tools of spectral analysis for partial differential operators to establish those conditions. 

\begin{lemma}\label{lem:morseindex} Under the hypotheses of Lemma \ref{lem:existenceHessian}, with $u=u_\xi$, $\Ker \nabla^2\calL_\xi (u_\xi)=\Ker D^2_{u_\xi}\calL_{\xi}$. 

If, in addition $\dim \Ker \nabla^2\calL_\xi (u_\xi)<+\infty$, the negative spectral subspace of $\nabla^2\calL_\xi (u_\xi)$ is finite dimensional, and hypothesis~(iii) of Theorem~\ref{thm:localcoercivitygen} is satisfied, 
then the dimension of the negative spectral subspace of $ \nabla^2\calL_\xi (u_\xi)$ in $\hat \Ban$ is equal to the Morse index $n(D^2_{u_\xi}\Lcal_\xi)$ of $u_\xi\in \Ban$ for $\Lcal_\xi$.
\end{lemma}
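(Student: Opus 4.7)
The first assertion $\Ker\nabla^2\calL_\xi(u_\xi)=\Ker D^2_{u_\xi}\calL_\xi$ should drop out immediately from the definition \eqref{eq:defHessiangen}. Indeed, if $z\in \Ker D^2_{u_\xi}\calL_\xi$ in the sense that $D^2_{u_\xi}\calL_\xi(z,v)=0$ for all $v\in\Ban$, then in the defining relation of $\Dcal(\nabla^2\calL_\xi(u_\xi))$ one may take $w=0\in\hat\Ban$, so that $z\in\Dcal(\nabla^2\calL_\xi(u_\xi))$ and $\nabla^2\calL_\xi(u_\xi)z=0$. Conversely, any $z\in\Ker\nabla^2\calL_\xi(u_\xi)$ trivially satisfies $D^2_{u_\xi}\calL_\xi(z,v)=\scal{0}{v}=0$. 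This argument does not depend on the extra finite-dimensionality or gap hypotheses.

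For the second (harder) assertion, I will work with the spectral decomposition of the self-adjoint operator $T:=\nabla^2\calL_\xi(u_\xi)$ on $\hat\Ban$. Under the given hypotheses, $T$ has a spectral gap at $0$: the negative spectrum consists of finitely many isolated eigenvalues of finite multiplicity, $0$ is an isolated eigenvalue of finite multiplicity (by hypothesis on $\dim\Ker T$ and (iii)), and the rest of the spectrum lies in $[a,+\infty)$ with $a>0$. Let $\pi_-,\pi_0,\pi_+$ be the corresponding orthogonal spectral projectors on $\hat\Ban$ and set $N:=\pi_-\hat\Ban$, $K:=\pi_0\hat\Ban$, $P:=\pi_+\hat\Ban$. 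Since $N$ and $K$ are finite-dimensional, they are spanned by eigenvectors, hence lie in $\Dcal(T)\subset\Ban$. Consequently, for any $v\in\Ban$ one has $\pi_-v,\pi_0v\in\Ban$ and therefore $\pi_+v=v-\pi_-v-\pi_0v\in\Ban$ as well.

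Next, I will show that the three spectral components are mutually $D^2_{u_\xi}\calL_\xi$-orthogonal. For $v\in\Ban$ and $w\in\{\pi_-v,\pi_0v,\pi_+v\}$, since $\pi_-v,\pi_0v\in\Dcal(T)$, one can write $D^2_{u_\xi}\calL_\xi(\pi_\star v,w)=\scal{T\pi_\star v}{w}$ for $\star\in\{-,0\}$. Using $T$-invariance of the spectral subspaces and the $\hat\Ban$-orthogonality of $N,K,P$, every cross term vanishes. Therefore
\begin{equation*}
D^2_{u_\xi}\calL_\xi(v,v)=D^2_{u_\xi}\calL_\xi(\pi_-v,\pi_-v)+D^2_{u_\xi}\calL_\xi(\pi_+v,\pi_+v),
\end{equation*}
with the first term $\le -\lambda_1\|\pi_-v\|^2\le 0$ (where $-\lambda_1<0$ is the largest negative eigenvalue), and the second term $\ge a\|\pi_+v\|^2\ge 0$ by hypothesis (iii).

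The two Morse-index inequalities then follow quickly. The lower bound $n(D^2_{u_\xi}\calL_\xi)\ge\dim N$ holds because $N\subset\Ban$ and $D^2_{u_\xi}\calL_\xi$ is strictly negative on $N\setminus\{0\}$. For the upper bound, suppose $\mathcal X\subset\Ban$ is negative definite for $D^2_{u_\xi}\calL_\xi$ with $\dim\mathcal X>\dim N$; then $\pi_-|_{\mathcal X}$ has nontrivial kernel, giving $0\neq x\in\mathcal X$ with $\pi_-x=0$, and the display above forces $D^2_{u_\xi}\calL_\xi(x,x)\ge 0$, a contradiction. Hence $n(D^2_{u_\xi}\calL_\xi)=\dim N$, as claimed.

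\textbf{Expected main obstacle.} The only delicate point is verifying that the spectral decomposition is compatible with the form domain: one must check that $\pi_-v,\pi_0v,\pi_+v$ all lie in $\Ban$ (not merely in $\hat\Ban$) and that the cross terms of the extended quadratic form vanish. This is where the finite dimensionality of $N\oplus K$ and the fact that these subspaces are spanned by eigenvectors (which lie in $\Dcal(T)\subset\Ban$) are essential, together with the identity $D^2_{u_\xi}\calL_\xi(z,v)=\scal{Tz}{v}$ valid for $z\in\Dcal(T)$ and $v\in\Ban$.
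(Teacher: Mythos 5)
Your argument is correct and follows essentially the same route as the paper's: both identify the two kernels directly from the definition \eqref{eq:defHessiangen}, and both compute the Morse index by splitting $E$ along the spectral projections of $\nabla^2\calL_\xi(u_\xi)$, showing the quadratic form is nonnegative on the complement of the (finite-dimensional, eigenvector-spanned) negative spectral subspace and concluding maximality by the rank argument on $\pi_-|_{\mathcal X}$. The one step you assert rather than prove --- that $D^2_{u_\xi}\calL_\xi(\pi_+v,\pi_+v)\ge a\|\pi_+v\|^2$ when $\pi_+v$ lies only in the form domain $E$ and not necessarily in $\DomH$ --- is exactly where the paper spends most of its effort, extending the inequality from $\DomH$ to $E$ via the form-core property and \eqref{eq:condGgen2}; your appeal to the spectral representation of the closed form covers the same ground, so this is a presentational rather than a substantive difference.
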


This lemma constitutes a slight generalization of   Lemma 5.4 in \cite{stuart08} and its proof follows along the same lines. We give it for completeness.

\begin{proof}
By definition  $\Ker \nabla^2\calL_\xi (u_\xi)=\{v\in \DomH \mid \nabla^2\calL_\xi (u_\xi)v=0\}$ and a straightforward calculation leads to $\Ker \nabla^2\calL_\xi (u_\xi)=\Ker D^2_{u_\xi}\calL_{\xi} \cap \DomH$. Moreover, using the definition \eqref{eq:defHessiangen}, it easy to see that $\Ker D^2_{u_\xi}\calL_{\xi} \subset \DomH$. As a consequence, $\Ker \nabla^2\calL_\xi (u_\xi)=\Ker D^2_{u_\xi}\calL_{\xi}$.

Now, we know that 
\begin{align*}
\dim \Ker \nabla^2\calL_\xi (u_\xi)=n_0<+\infty,\qquad\inf (\sigma(\nabla^2\calL_\xi (u_\xi))\cap (0,+\infty))>0 
\end{align*}
and we denote by $0\leq n_-<+\infty$ the dimension of the negative spectral subspace of $ \nabla^2\calL_\xi (u_\xi)$ in $\hat \Ban$. It follows that there exists $\Gamma>0$ such that $\sigma(\nabla^2\calL_\xi (u_\xi))\cap (0,\Gamma]=\emptyset$. Let $P_{(-\infty, 0]}:\hat E\to\hat E$
denote the orthogonal projection onto the finite dimensional span of all the eigenvectors of $\nabla^2\calL_\xi (u_\xi)$ corresponding to the eigenvalues in $(-\infty,0]$, and let $Q=I-P_{(-\infty, 0]}$. We have that $P_{(-\infty, 0]}(\hat \Ban)\subset\DomH$ and that $Qz\in \DomH$ if and only if $z\in \DomH$. Also $\dim P_{(-\infty, 0]}(\hat\Ban)=n_0+n_-$ and $\scal{\nabla^2\Lcal_{\xi}(u_\xi)Qz}{Qz}\ge \Gamma \|Qz\|^2$ for all $z\in \DomH$. 
Thus, for any $z\in \DomH$
\begin{align*}
	D^2_{u_\xi}\calL_{\xi} (z,z)=&\,\scal{\nabla^2\Lcal_{\xi}(u_\xi)z}{z}=\scal{Q\nabla^2\Lcal_{\xi}(u_\xi)z}{z}+\scal{P_{(-\infty, 0]}\nabla^2\Lcal_{\xi}(u_\xi)z}{z}\\
	=&\,\scal{\nabla^2\Lcal_{\xi}(u_\xi)Qz}{Qz}+\scal{P_{(-\infty, 0]}\nabla^2\Lcal_{\xi}(u_\xi)z}{z}\\
	\ge&\, \Gamma \|Qz\|^2+\scal{P_{(-\infty, 0]}\nabla^2\Lcal_{\xi}(u_\xi)z}{z}\\
	\geq&\, \Gamma \|z\|^2- \Gamma \|P_{(-\infty, 0]}z\|^2+\scal{\nabla^2\Lcal_{\xi}(u_\xi)P_{(-\infty, 0]}z}{z}\\
	\geq&\,\Gamma \|z\|^2+\scal{(\nabla^2\Lcal_{\xi}(u_\xi)-\Gamma I)P_{(-\infty, 0]}z}{z}.
\end{align*}
As a consequence, using \eqref{eq:condGgen2} and the fact that $\DomH$ is dense in $\Ban$, we obtain 
\begin{equation*}
	\left(1+\frac{\Gamma}{C}\right)D^2_{u_\xi}\calL_{\xi} (z,z)-\scal{(\nabla^2\Lcal_{\xi}(u_\xi)-\Gamma I)P_{(-\infty, 0]}z}{z}\ge \frac{\varepsilon\Gamma}{C} \|z\|_{\Ban}^2
\end{equation*}
for all $z\in\Ban$, which implies
\begin{equation*}
	D^2_{u_\xi}\calL_{\xi} (z,z)-\frac{C}{\Gamma+C}\scal{(\nabla^2\Lcal_{\xi}(u_\xi)-\Gamma I)P_{(-\infty, 0]}z}{z}\ge \frac{\varepsilon\Gamma}{\Gamma+C} \|z\|_{\Ban}^2
\end{equation*}
for all $z\in\Ban$. 
Moreover, for all $z\in Q(\Ban)$,
\begin{equation}\label{eq:Qpos}
	D^2_{u_\xi}\calL_{\xi} (z,z)\ge \frac{\varepsilon\Gamma}{\Gamma+C} \|z\|_{\Ban}^2
\end{equation}
since $P_{(-\infty, 0]}z=0$. But $Q(\Ban)\subset \Ban$ since $P_{(-\infty, 0]}(\Ban)\subset P_{(-\infty, 0]}(\hat \Ban)\subset \DomH\subset \Ban$. 
So we have shown that $Q(E)$ is a positive subspace of $E$, for $D^2_{u_\xi}\mathcal L$. Now consider the direct sum decomposition of $E$ given by
$$
E=Q(E)\oplus P_{(-\infty, 0)}(E)\oplus \mathrm{Ker}(D^2\mathcal L_\xi).
$$
Here $P_{(-\infty, 0)}$ is the projector onto the $n_-$-dimensional space spanned by the eigenvectors of $\nabla^2\mathcal L_\xi(u_\xi)$ with strictly negative eigenvalue. Clearly, $P_{(-\infty, 0)}(E)$ is a negative definite space for $D^2_{u_\xi}\mathcal L_\xi$. We now show it is maximal. For that purpose, suppose $z_*\in E, z_*\not\in P_{(-\infty, 0)}(E)$ and suppose span$\{z_*, P_{(-\infty, 0)}(E)\}$ is a negative definite subspace of $E$ for $D^2_{u_\xi}\mathcal L_\xi$ of dimension $n_-+1$. We can suppose, without loss of generality, that $z_*\in \mathrm{Ker}(D^2\mathcal L_\xi)\oplus Q(E)$.  Writing $z_*=z_{*,0}+z_{*, 1}$ with $z_{*,0}\in \mathrm{Ker}(D^2\mathcal L_\xi)$ and $z_{*, 1}\in Q(E)$, we see
$$
D^2_{u_\xi}\mathcal L_\xi(z_*, z_*)=D^2_{u_\xi}\mathcal L_\xi(z_{*,1}, z_{*,1})\geq 0,
$$
where we used~\eqref{eq:Qpos}.
This contradicts the fact that span$\{z_*, P_{(-\infty, 0)}(E)\}$ is negative definite subspace for $D^2_{u_\xi}\mathcal L_\xi$ and shows that $P_{(-\infty, 0)}(E)$ is a maximally negative definite subspace for $D^2_{u_\xi}\mathcal L_\xi$. Thus $n(D^2_{u_\xi}\Lcal_\xi)=n_-$.\\
\end{proof}

For the proof of Theorem \ref{thm:localcoercivitygen}, we will need the following two lemmas.

\begin{lemma}\label{lem:densityortho} Let $(\mathcal H,\|\cdot\|)$ be a Hilbert space and $\mathcal M\subset \mathcal K\subset \mathcal H$ with $\mathcal M$ a closed subspace of $\mathcal H$ and $\mathcal K$ a dense subspace of $\mathcal H$ ($\overline{ \mathcal K}^{\|\cdot\|}=\mathcal H$). Then
\begin{equation}\label{eq:densityortho}
\overline{ \mathcal M^{\perp}\cap\mathcal K}^{\|\cdot\|}= \mathcal M^{\perp}
\end{equation}
\end{lemma}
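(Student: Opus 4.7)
The plan is to exploit the orthogonal projection onto $\mathcal{M}$. Let $P_\mathcal{M}: \mathcal{H} \to \mathcal{M}$ be the orthogonal projection onto the closed subspace $\mathcal{M}$, and let $P_{\mathcal{M}^\perp} = I - P_\mathcal{M}$ be the orthogonal projection onto $\mathcal{M}^\perp$. Both are bounded linear operators on $\mathcal{H}$.

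First I would observe the inclusion $\mathcal{M}^\perp \cap \mathcal{K} \subset \mathcal{M}^\perp$, and since $\mathcal{M}^\perp$ is closed, we get $\overline{\mathcal{M}^\perp \cap \mathcal{K}}^{\|\cdot\|} \subset \mathcal{M}^\perp$ for free. The heart of the matter is the reverse inclusion.

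Now fix any $v \in \mathcal{M}^\perp$. Since $\mathcal{K}$ is dense in $\mathcal{H}$, there exists a sequence $(v_n)_{n \in \mathbb{N}}$ in $\mathcal{K}$ with $v_n \to v$ in $\|\cdot\|$. The natural candidate for an approximating sequence in $\mathcal{M}^\perp \cap \mathcal{K}$ is $w_n := P_{\mathcal{M}^\perp} v_n = v_n - P_\mathcal{M} v_n$. By construction $w_n \in \mathcal{M}^\perp$, and since $P_{\mathcal{M}^\perp}$ is continuous we have
\begin{equation*}
w_n = P_{\mathcal{M}^\perp} v_n \longrightarrow P_{\mathcal{M}^\perp} v = v.
\end{equation*}

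The only remaining point is to check that $w_n \in \mathcal{K}$, and this is precisely where the hypothesis $\mathcal{M} \subset \mathcal{K}$ is used: $v_n \in \mathcal{K}$ by choice, and $P_\mathcal{M} v_n \in \mathcal{M} \subset \mathcal{K}$, so since $\mathcal{K}$ is a subspace, $w_n = v_n - P_\mathcal{M} v_n \in \mathcal{K}$. Hence $w_n \in \mathcal{M}^\perp \cap \mathcal{K}$, which shows $v \in \overline{\mathcal{M}^\perp \cap \mathcal{K}}^{\|\cdot\|}$ and completes the proof. There is no real obstacle here; the only subtlety is noticing that the assumption $\mathcal{M} \subset \mathcal{K}$ is exactly what is needed to keep the $\mathcal{M}$-component of the approximating sequence inside $\mathcal{K}$.
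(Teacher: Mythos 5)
Your proof is correct and follows essentially the same route as the paper's: both decompose an approximating sequence from $\mathcal K$ via the orthogonal projections onto $\mathcal M$ and $\mathcal M^\perp$, use $\mathcal M\subset\mathcal K$ to keep the $\mathcal M^\perp$-component inside $\mathcal K$, and conclude by continuity of the projection. No gaps.
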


\begin{proof} Let $u\in \mathcal M^\perp$. There exists a sequence $k_n\in \mathcal K$ so that $k_n\to u$. Since $\mathcal M$ is closed, we can write
$k_n=w_n + v_n$, with $w_n\in \mathcal M, v_n\in \mathcal M^\perp$. Moreover, since $k_n\in \mathcal K$ and $w_n\in \mathcal M\subset \mathcal K$, $v_n\in \mathcal M^\perp\cap \mathcal K$.
Clearly both sequences $w_n$ and $v_n$ converge, respectively to $w\in \mathcal M, v\in \mathcal M^\perp$. Since $u=w+v\in \mathcal M^\perp$, we find $w=0$ and $v=u$. Hence $v_n\in \mathcal K \cap \mathcal M^\perp$ converges to $u$. 
\end{proof}
We introduce, as before $E_0=\Ker \nabla^2_{u_\xi}\mathcal L_\xi$. We know from Lemma~\ref{lem:morseindex} that $\Ker \nabla^2_{u_\xi}\mathcal L_\xi=\Ker D^2_{u_\xi}\mathcal L_\xi$. Hypothesis~(ii) of the theorem then implies
\begin{equation}
E_0=T_{u_\xi}\Ocal_{u_\xi}\subset \DomH.
\end{equation}
We define furthermore
\begin{equation}\label{eq:Vdef}
V_{u_\xi}=\mathrm{span}\{\nabla F_j(u_\xi), {j=1,...m}\}.
\end{equation}
Note that, by hypothesis $(v)$ of Theorem~\ref{thm:localcoercivitygen}, $ V_{u_\xi}\subset\DomH\subset E$. 
\begin{lemma}\label{lem:propdom} Under the hypotheses of Theorem~\ref{thm:localcoercivitygen}, we have\\
(i) $E_0^\perp \cap \overline{T_{u_\xi}\Sigma_{\mu_\xi}}^{\|\cdot\|}=\overline{E_0^\perp \cap T_{u_\xi}\Sigma_{\mu_\xi} }^{\|\cdot\|}.$\\
(ii) $\overline{T_{u_\xi}\Sigma_{\mu_\xi}\cap \DomH}^{\|\cdot\|}=\overline{T_{u_\xi}\Sigma_{\mu_\xi}}^{\|\cdot\|}.$\\
(iii) Define
\begin{equation}\label{eq:Yspace}
\Ycal= T_{u_\xi}\Sigma_{\mu_\xi}\cap \DomH\cap E_0^{\perp}.
\end{equation}
Then 
\begin{equation}\label{eq:Yhatspace}
\hat\Ycal:=\overline\Ycal^{\|\cdot\|} = E_0^\perp\cap V_{u_\xi}^\perp,
\end{equation}
where $V_{u_\xi}$ is defined in~\eqref{eq:Vdef}. Hence
\begin{equation}
E_0^\perp=T_{u_\xi}\Ocal_{u_\xi}^\perp = \hat{\mathcal Y} \oplus_\perp V_{u_\xi},\quad \hat E=E_0\oplus_\perp \hat{\mathcal Y} \oplus_\perp V_{u_\xi}.
\end{equation}
(iv) Let $P$ be the orthogonal projector onto $\hat{\mathcal Y}$. Let $u\in\DomH\cap E_0^\perp.$ Then $Pu\in\mathcal Y.$
\end{lemma}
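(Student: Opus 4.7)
For (i), my plan is a direct application of Lemma~\ref{lem:densityortho} in the Hilbert subspace $\mathcal H := \overline{T_{u_\xi}\Sigma_{\mu_\xi}}^{\|\cdot\|}$, with dense subspace $\mathcal K := T_{u_\xi}\Sigma_{\mu_\xi}$ and closed subspace $\mathcal M := E_0 = T_{u_\xi}\Ocal_{u_\xi}$. One checks that $E_0\subset T_{u_\xi}\Sigma_{\mu_\xi}$ (since $\Ocal_{u_\xi}\subset\Sigma_{\mu_\xi}$), and that $E_0$ is finite dimensional (as the image of $\frak g_{\mu_\xi}$ under $\eta\mapsto X_\eta(u_\xi)$), hence closed in $\mathcal H$. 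The conclusion of Lemma~\ref{lem:densityortho} is then exactly~(i), the outer $E_0^\perp$ being taken either in $\hat E$ or in $\mathcal H$ without ambiguity.

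For (ii), my plan is to combine the $\|\cdot\|_E$-density of $\DomH$ in $\Ban$ (from the Remark following Lemma~\ref{lem:existenceHessian}) with a finite-dimensional correction. Given $v\in T_{u_\xi}\Sigma_{\mu_\xi}$, pick $v_n\in\DomH$ with $v_n\to v$ in $\|\cdot\|_E$, hence in $\|\cdot\|$, and set
\[
\tilde v_n := v_n - \sum_{j=1}^m c_j^{(n)}\nabla F_j(u_\xi),
\]
where $(c_j^{(n)})$ solves the Gram system $\sum_j\langle\nabla F_k(u_\xi),\nabla F_j(u_\xi)\rangle c_j^{(n)} = \langle\nabla F_k(u_\xi),v_n\rangle$. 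Because $\mu_\xi$ is a regular value, the family $\{\nabla F_j(u_\xi)\}_{j=1}^m$ is linearly independent, so the Gram matrix is invertible; hypothesis~(v) of Theorem~\ref{thm:localcoercivitygen} places each $\nabla F_j(u_\xi)$ in $\DomH$, so $\tilde v_n\in T_{u_\xi}\Sigma_{\mu_\xi}\cap\DomH$; and $c^{(n)}\to0$ because $v\in T_{u_\xi}\Sigma_{\mu_\xi}$ forces $\langle\nabla F_k(u_\xi),v\rangle=0$, so $\tilde v_n\to v$ in $\|\cdot\|$.

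For (iii), I first note that the same Gram correction, applied now to a $\|\cdot\|$-approximating sequence $w_n\in\Ban$ of any $w\in V_{u_\xi}^\perp\subset\hat\Ban$ (using $\|\cdot\|$-density of $\Ban$ in $\hat\Ban$), gives $\overline{T_{u_\xi}\Sigma_{\mu_\xi}}^{\|\cdot\|} = V_{u_\xi}^\perp$. Next, given $v\in E_0^\perp\cap T_{u_\xi}\Sigma_{\mu_\xi}$, apply~(ii) to obtain $v_n\in T_{u_\xi}\Sigma_{\mu_\xi}\cap\DomH$ with $v_n\to v$, and subtract the finite-rank orthogonal projection $P_{E_0}v_n$; since $E_0 = T_{u_\xi}\Ocal_{u_\xi}\subset T_{u_\xi}\Sigma_{\mu_\xi}\cap\DomH$ and $P_{E_0}v_n\to P_{E_0}v=0$, the corrected sequence lies in $\Ycal$ and still converges to $v$. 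Chaining these facts with~(i) then gives
\[
\hat\Ycal = \overline{E_0^\perp\cap T_{u_\xi}\Sigma_{\mu_\xi}}^{\|\cdot\|} = E_0^\perp\cap \overline{T_{u_\xi}\Sigma_{\mu_\xi}}^{\|\cdot\|} = E_0^\perp\cap V_{u_\xi}^\perp.
\]
The orthogonal decompositions of $E_0^\perp$ and $\hat\Ban$ follow because $V_{u_\xi}\perp E_0$ (equivalently $E_0\subset T_{u_\xi}\Sigma_{\mu_\xi} = V_{u_\xi}^\perp$, as $\Ocal_{u_\xi}\subset\Sigma_{\mu_\xi}$) and $V_{u_\xi}\perp\hat\Ycal$ by the characterization just obtained.

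Finally, for (iv), the orthogonal decomposition of (iii) lets me write $u = Pu + u_0 + v$ with $u_0\in E_0$ and $v\in V_{u_\xi}$. Pairing with elements of $E_0$ and using $u\in E_0^\perp$ forces $u_0=0$, so $Pu = u - v$; both $u$ (by assumption) and $v\in V_{u_\xi}\subset\DomH$ (by hypothesis~(v)) lie in $\DomH$, hence $Pu\in\DomH\subset\Ban$. Then $\langle\nabla F_j(u_\xi),Pu\rangle = D_{u_\xi}F_j(Pu) = 0$ because $Pu\in V_{u_\xi}^\perp$, so $Pu\in T_{u_\xi}\Sigma_{\mu_\xi}$; combined with $Pu\in E_0^\perp$ this yields $Pu\in\Ycal$. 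I expect the main obstacle to lie in assembling the density chain of~(iii), where the Gram correction of~(ii) and the $E_0$-projection must be simultaneously compatible with $\DomH$; this compatibility is precisely where hypothesis~(v) of Theorem~\ref{thm:localcoercivitygen} is indispensable.
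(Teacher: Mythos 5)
Your proof is correct and follows essentially the same route as the paper: part (i) is the same application of Lemma~\ref{lem:densityortho}, parts (ii) and (iii) rest on the same key facts ($T_{u_\xi}\Sigma_{\mu_\xi}=V_{u_\xi}^\perp\cap\Ban$, $V_{u_\xi}\subset\DomH$ by hypothesis~(v), and $E_0=T_{u_\xi}\Ocal_{u_\xi}\subset\DomH\cap T_{u_\xi}\Sigma_{\mu_\xi}$ finite dimensional), and part (iv) is the paper's argument verbatim. The only difference is presentational: where the paper reuses Lemma~\ref{lem:densityortho} and compares two orthogonal decompositions of $\hat\Ban$, you construct the approximating sequences explicitly by subtracting the Gram/orthogonal projections onto $V_{u_\xi}$ and $E_0$, which is the same mechanism written out by hand.
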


We use the notation $\oplus_\perp$ to indicate a direct sum that is orthogonal for the inner product $\langle\cdot, \cdot\rangle$. 
\begin{proof} (i) Note that $E_0\subset T_{u_\xi}\Sigma_{\mu_\xi}\subset\overline{T_{u_\xi}\Sigma_{\mu_\xi}}^{\|\cdot\|}$. We now apply Lemma~\ref{lem:densityortho} with $\mathcal M=E_0, \mathcal K= T_{u_\xi}\Sigma_{\mu_\xi}, \mathcal H = \overline{T_{u_\xi}\Sigma_{\mu_\xi}}^{\|\cdot\|}$ to obtain
\begin{equation*}
E_0^{\perp_\sigma}=\overline{E_0^{\perp_\sigma}\cap T_{u_\xi}\Sigma_{\mu_\xi}}^{\|\cdot\|}.
\end{equation*}
 Here we wrote $E_0^{\perp_\sigma}$ for the orthogonal complement to $E_0$ in $\overline{T_{u_\xi}\Sigma_{\mu_\xi}}^{\|\cdot\|}$, \emph{i.e.}
\begin{equation}\label{eq:Ezeroperpsigma}
E_0^{\perp_\sigma}=E_0^\perp\cap \overline{T_{u_\xi}\Sigma_{\mu_\xi}}^{\|\cdot\|}.
\end{equation}
The last two equations imply the result. \\
(ii) Since $V_{u_\xi}$ is a closed finite dimensional subspace of $\hat \Ban$, we have 
$
\hat \Ban=V_{u_\xi}^\perp\oplus_\perp V_{u_\xi}
$
with 
$$
V_{u_\xi}^\perp=\{w\in \hat\Ban\,|\,\scal{v}{w}=0\  \forall v\in V_{u_\xi}\}. 
$$
Since by hypothesis~(v) of Theorem~\ref{thm:localcoercivitygen}, for all $w\in E$,  $D_{u_\xi} F_j(w)=\scal{\nabla F_j(u_\xi)}{w}$ for $j=1,...,m$, we see that
\begin{equation}\label{eq:tusigma}
T_{u_\xi}\Sigma_{\mu_\xi}=V_{u_\xi}^\perp \cap \Ban
\end{equation}
and hence
\begin{equation}\label{eq:Esum}
E=T_{u_\xi}\Sigma_{\mu_\xi}\oplus_\perp V_{u_\xi}.
\end{equation}

Using Lemma~\ref{lem:densityortho}, and the fact that $V_{u_\xi}\subset E\subset \hat E$, with $E$ dense in $\hat E$, \eqref{eq:tusigma} implies $\overline{T_{u_\xi}\Sigma_{\mu_\xi}}^{\|\cdot\|}=\overline{V_{u_\xi}^\perp\cap \Ban}^{\|\cdot\|}=V_{u_\xi}^\perp$ meaning that 
\begin{equation}\label{eq:hatEsum}
\hat \Ban= \overline{T_{u_\xi}\Sigma_{\mu_\xi}}^{\|\cdot\|} \oplus_\perp V_{u_\xi}.
\end{equation}

From~\eqref{eq:Esum} one concludes 
$$
\DomH = \left(\DomH\cap T_{u_\xi}\Sigma_{\mu_\xi}\right)\oplus_\perp V_{u_\xi},
$$
and hence
$$
\hat E = \overline{\DomH\cap T_{u_\xi}\Sigma_{\mu_\xi}}^{\|\cdot\|}\oplus_\perp V_{u_\xi}
$$
Comparing this to~\eqref{eq:hatEsum}, one concludes
\begin{equation}
\overline{T_{u_\xi}\Sigma_{\mu_\xi}}^{\|\cdot\|} =\overline{\DomH\cap T_{u_\xi}\Sigma_{\mu_\xi}}^{\|\cdot\|}.
\end{equation}
In other words, $\DomH\cap T_{u_\xi}\Sigma_{\mu_\xi}$ is dense in $\overline{T_{u_\xi}\Sigma_{\mu_\xi}}^{\|\cdot\|}.$ This proves~(ii).

(iii)  Note that $E_0\subset \DomH\cap T_{u_\xi}\Sigma_{\mu_\xi}\subset \overline{T_{u_\xi}\Sigma_{\mu_\xi}}^{\|\cdot\|}.$ Then we can, in view of part (ii) of the Lemma,  apply Lemma~\ref{lem:densityortho} with $\mathcal M=E_0, \mathcal K=\DomH\cap T_{u_\xi}\Sigma_{\mu_\xi}$, and $\mathcal H=\overline{T_{u_\xi}\Sigma_{\mu_\xi}}^{\|\cdot\|}$ to obtain:
$$
E_0^{\perp_\sigma}=\overline{E_0^{\perp_\sigma}\cap \DomH\cap T_{u_\xi}\Sigma_{\mu_\xi}}^{\|\cdot\|}=\overline{E_0^{\perp}\cap \DomH\cap T_{u_\xi}\Sigma_{\mu_\xi}}^{\|\cdot\|}=\hat{\mathcal Y},
$$
where $E_0^{\perp_\sigma}$ is defined in~\eqref{eq:Ezeroperpsigma}. Since 
$$
E_0^\perp = E_0^{\perp_\sigma}\oplus_\perp V_{u_\xi},
$$
part (iii) follows.\\
(iv) Let $u\in \DomH\cap E_0^\perp$. Since $E_0^\perp=\hat{\mathcal Y}\oplus_\perp V_{u_\xi}$, $u=Pu+v$ with $Pu\in \hat{\mathcal Y}$ and $v\in V_{u_\xi}$. Moreover, since $u\in \DomH$ and $V_{u_\xi}\subset \DomH$, it follows $Pu\in \DomH\cap \hat{\mathcal Y}$. To conclude, we observe that 
$$
\DomH\cap \hat{\mathcal Y}=\mathcal Y.
$$
Indeed, using \eqref{eq:Yhatspace} and \eqref{eq:tusigma},
\begin{align*}
\DomH\cap \hat{\mathcal Y}=&\,\DomH\cap E_0^\perp \cap V_{u_\xi}^\perp= \DomH \cap E_0^\perp \cap V_{u_\xi}^\perp\cap \Ban\\
=&\, \DomH \cap E_0^\perp \cap T_{u_\xi}\Sigma_{\mu_\xi}=\Ycal.
\end{align*}
Finally, $Pu\in \Ycal$.\\
\end{proof}

We can now proceed with the proof of Theorem \ref{thm:localcoercivitygen}.

\begin{proof}[Proof of Theorem \ref{thm:localcoercivitygen}] Let $n_-=n(D^2_{u_\xi}\Lcal_\xi)$. As in the proof of Theorem \ref{thm:localcoercivity}, $\Xcal_-={\rm span}\,\left\{\eta_1\cdot \nabla_\xi u_\xi,..., \eta_{n_-}\cdot \nabla_\xi u_\xi\right\}$ is a maximally negative definite subspace for $D^2_{u_\xi}\Lcal_\xi$, $\Xcal_-\cap T_{u_\xi}\Sigma_{\mu_\xi}=\{0\}$, and $T_{u_\xi}\Sigma_{\mu_\xi}$ is a positive subspace for $D^2_{u_\xi}\Lcal_\xi$. 
Furthermore, note that by hypothesis $\Ker D^2_{u_\xi}\Lcal_\xi= T_{u_\xi}\Ocal_{u_\xi}\subset T_{u_\xi}\Sigma_{\mu_\xi}$.  Hence, by Lemma~\ref{lem:Borthopositive}~(ii), $T_{u_\xi}\Sigma_{\mu_\xi} \cap \left(T_{u_\xi}\Ocal_{u_\xi}\right)^{\perp}$ is a positive definite subspace for $D^2_{u_\xi}\Lcal_\xi$, meaning 
\begin{equation}\label{eq:strctlyposop}
D^2_{u_\xi}\Lcal_\xi(v,v)>0, \quad \forall v\in T_{u_\xi}\Sigma_{\mu_\xi} \cap \left(T_{u_\xi}\Ocal_{u_\xi}\right)^{\perp}, v\neq 0.
\end{equation}
We note for further reference that, by Lemma~\ref{lem:morseindex}, $\Ker D^2_{u_\xi}\Lcal_\xi=\Ker \nabla^2\Lcal_\xi(u_\xi)$, so that $T_{u_\xi}\Ocal_{u_\xi}\subset \DomH$. 

Recall that, since $E_0$ is the kernel of $\nabla^2\Lcal_{\xi}(u_\xi)$, $E_0^\perp$ is the spectral space associated to $\R^*$, and hence invariant under $\nabla^2\Lcal_{\xi}(u_\xi)$. Now, let $P$ be the projection on $\hat\Ycal$ and consider the following decomposition of the operator $\nabla^2\Lcal_{\xi}(u_\xi)$ on $E_0^\perp=\hat{\mathcal Y} \oplus_\perp V_{u_\xi}$,
\begin{align*}
\nabla^2\Lcal_{\xi}(u_\xi)_{\mid E_0^\perp}=& P\nabla^2\Lcal_{\xi}(u_\xi)P+P\nabla^2\Lcal_{\xi}(u_\xi)(I_{E_0^\perp}-P)\\
&+(I_{E_0^\perp}-P)\nabla^2\Lcal_{\xi}(u_\xi)P+(I_{E_0^\perp}-P)\nabla^2\Lcal_{\xi}(u_\xi)(I_{E_0^\perp}-P).
\end{align*}
We claim that $P\nabla^2\Lcal_{\xi}(u_\xi)P$ is a self-adjoint operator on $E_0^\perp$ with domain $E_0^\perp\cap \DomH$. Indeed, since $V_{u_\xi}$ is finite dimensional, we can easily show that $P\nabla^2\Lcal_{\xi}(u_\xi)(I_{E_0^\perp}-P)+(I_{E_0^\perp}-P)\nabla^2\Lcal_{\xi}(u_\xi)P$ and $(I_{E_0^\perp}-P)\nabla^2\Lcal_{\xi}(u_\xi)(I_{E_0^\perp}-P)$ are bounded self-adjoint operators on $E_0^\perp$. Hence $P\nabla^2\Lcal_{\xi}(u_\xi)P$ is the sum of a self-adjoint operator with domain $E_0^\perp\cap \DomH$ and a bounded operator on $E_0^\perp$ and, by the Kato-Rellich theorem, it is self-adjoint on $E_0^\perp$ with domain $E_0^\perp\cap \DomH$. In particular,
 $\nabla^2\Lcal_{\xi}(u_\xi)= P\nabla^2\Lcal_{\xi}(u_\xi)P+K$ with $K$ a finite rank operator and $\sigma_{\rm ess}(P\nabla^2\Lcal_{\xi}(u_\xi)P)=\sigma_{\rm ess}(\nabla^2\Lcal_{\xi}(u_\xi))$. As a consequence, $0\notin \sigma_{\rm ess}(P\nabla^2\Lcal_{\xi}(u_\xi)P)$ by hypotheses $(iii)$ and $(iv)$ of Theorem~\ref{thm:localcoercivitygen}. 
 
Now note that $\ker P\nabla^2\Lcal_{\xi}(u_\xi)P=V_{u_\xi}\subset E_0^\perp\cap  \DomH$. Indeed, let $u\in E_0^\perp\cap \DomH$ be such that $P\nabla^2\Lcal_{\xi}(u_\xi)Pu=0$. Then
$$
D^2_{u_\xi}\mathcal L_{\xi} (Pu, Pu)=\langle u, P\nabla^2\Lcal_{\xi}(u_\xi)Pu\rangle=0
$$
with $Pu \in \mathcal Y$ by Lemma~\ref{lem:propdom}. 
Since $D^2_{u_\xi}\mathcal L_{\xi}$ is strictly positive on $\mathcal Y$ (see~\eqref{eq:strctlyposop}), it follows that $Pu=0$, which means that $u\in V_{u_\xi}$.  

We now consider $P\nabla^2\Lcal_{\xi}(u_\xi)P$ on $\mathcal Y$. We have just shown that $$0\not\in \sigma(P\nabla^2\Lcal_{\xi}(u_\xi)P)=\sigma_{\mathrm{ess}}(P\nabla^2\Lcal_{\xi}(u_\xi)P)\cup\sigma_{\mathrm{d}}(P\nabla^2\Lcal_{\xi}(u_\xi)P)$$ and that $P\nabla^2\Lcal_{\xi}(u_\xi)P$ is strictly positive on $\mathcal Y$ (see~\eqref{eq:strctlyposop}).
It therefore has a spectral gap: 
$$
\tilde\delta=\inf\limits_{v\in\Ycal \smallsetminus \{0\}}\frac{\scal{P\nabla^2\Lcal_{\xi}(u_\xi)Pv}{v}}{\scal{v}{v}}>0.
$$
Next, using the inequality \eqref{eq:condGgen}, we obtain for all $v\in \Ycal$
\[
\left(1+\frac{\tilde \delta}{C}\right)D^2_{u_\xi}\Lcal_\xi(v,v)\ge \frac{\epsilon\tilde\delta}{C} \|v\|_{\Ban}^2
\]
which implies
\[
D^2_{u_\xi}\Lcal_\xi(v,v)\ge \frac{\epsilon\tilde\delta}{\tilde \delta+C} \|v\|_{\Ban}^2.
\]
Finally, the density of $\DomH$ in $\Ban$ for $\|\cdot\|_\Ban$ yields \eqref{eq:localcoercivitygen}.
\end{proof}

\section{Persistence of relative equilibria}\label{s:persistence}
In this section we come back to the question of persistence of relative equilibria, which is the question of the existence of a family of relative equilibria as in~\eqref{eq:deftildeu}-\eqref{eq:statequation}. Three situations occur. In some cases, such a family can be explicitly exhibited. In others, its existence can be proven by ad hoc methods adapted to the specific situation at hand. We will give examples of both these cases in the following section. Finally, under suitable conditions, general structural theorems asserting its existence can be proven. We give below a theorem guaranteeing the existence of a family of relative equilibria as in~\eqref{eq:deftildeu}-\eqref{eq:statequation} in the infinite dimensional framework under study here, under a natural condition on the point $\mu_*=F(u_*)$ in $\frak g^*$, which is for example always satisfied when the symmetry group $G$ is commutative and which is satisfied on an open dense subset of $\frak g^*$ in all cases. We will comment on the relation with the situation for finite dimensional systems at the end of this section. Applications will be given in the following section.

We will make the following hypothesis throughout this section:

\vskip0.2cm
\noindent{\bf Hypothesis D.} $E$ is a Hilbert space with inner product  $\langle\cdot, \cdot\rangle_E$ and $\|\cdot\|_{\Ban}=\sqrt{\langle\cdot,\cdot\rangle_E}$.
\vskip 0.2cm

\noindent With this hypothesis, one can view $\langle\cdot, \cdot\rangle_E$ as a closed form on $\hat E$ (defined in~\eqref{eq:hatban}), with form domain $E$. It follows (Theorem~VIII.15 in~\cite{reesim1980}) that there exists a unique unbounded positive operator $T^2$ on $\hat E$, with domain $\mathcal D(T^2)$, so that, for all $u,v\in \mathcal D(T^2)$,
$$
\langle u,v\rangle_E=\langle u, T^2v\rangle,
$$ 
and so that, in addition $E=\Dcal(T)$ and, for all $u,v\in \mathcal D(T)$
$$
\langle u,v\rangle_E=\langle Tu, Tv\rangle.
$$ 
Here $T$ is the positive square root of $T^2$.  Note that
$$
\langle u, T^2 u\rangle \geq \langle u, u\rangle,
$$
so that $0$ is in the resolvent set of $T^2$ and hence $T^{-2}$ is a bounded operator on $\hat E$. 

Next, we introduce in the usual manner the scale of spaces $\Ban_\lambda=\overline{\mathcal D(T^{\lambda})}^{\|\cdot\|_\lambda}$, where $\|u\|_\lambda=\|T^\lambda u\|$ and $\lambda\in \R$. In particular, we have $\Ban=\Ban_1$ and $\hat{\Ban}=\Ban_0$. 

Our persistence result then reads as follows:
\begin{theorem}\label{thm:persistenceb}
Let Hypotheses A, B  and D hold and suppose there exists $\xi_*\in\frak g$ and $u_{*}\in \mathcal D\cap \Ban_2$ so that $D_{u_*}\Lcal_{\xi_*}=0$. Suppose in addition:
\begin{enumerate}[label=(\alph*)]
\item There exist $\varepsilon, C>0$ such that, for all $v\in\Ban$,
\begin{equation}\label{eq:condGgen3}
D^2_{u_{*}}\calL_{\xi_*}(v,v)\ge \varepsilon \|v\|^2_{\Ban}-C\|v\|^2.
\end{equation}
\item For all $u\in \Ban_2$ and for all $\xi \in \frak g$ there exists $\nabla \mathcal L_{\xi}(u)\in \hat \Ban$ such that
\begin{equation}
	\label{eq:defgradientL}
	D_{u} \mathcal L_\xi(v)=\scal{\nabla \mathcal L_{\xi}(u)}{v},\quad \forall v\in \Ban.
\end{equation}
\item \label{hyp:regularity} The function $(\xi, v)\in \frak g\times E_2\to \nabla\Lcal_\xi(v)\in\hat E$ belongs to $C^1(\frak g\times E_2; \hat E)$. 
\item \label{hyp:derivative} For all $v\in E_2$, $g\in G \to \Phi_g(v)\in E$ is $C^1$.
\item \label{hyp:regularpoint} The function $F$ is regular at $u_*$.
\item \label{hyp:mstarregular} For all $\mu$ in a neighbourhood of $\mu_*=F(u_*)$, dim$\frak g_\mu=$dim$\frak g_{\mu_*}$.
\end{enumerate}
If in addition,
\begin{enumerate}[label=(\roman*)]
\item $\mathcal D(\nablasq(u_{*}))=\mathcal D(T^2)$
\item \label{hyp:kernelper}$\Ker D^2_{u_{*}}\calL_{\xi_*} = T_{u_{*}}\Ocal_{u_{*}}$, 
\item \label{hyp:gap1}$\inf (\sigma(\nabla^2\calL_{\xi_*} (u_{*}))\cap (0,+\infty))>0$,
\item \label{hyp:gap2}$n(D^2_{u_{*}}\Lcal_{\xi_*})<+\infty$,
\end{enumerate}
Then there exists a neighbourhood $\Omega$ of $\xi_*$ in $\frak g$ and a $C^1$ map $\xi\in\Omega\to u_\xi\in E$ with $u_{\xi_*}=u_*$ so that for all $\xi$, \eqref{eq:statequation} holds.  The map $\xi\to u_\xi$ is an injective immersion.
\end{theorem}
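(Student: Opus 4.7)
The plan is to carry out a Lyapunov--Schmidt reduction followed by the implicit function theorem. I set $L := \nabla^2\mathcal L_{\xi_*}(u_*)$; by Lemma~\ref{lem:existenceHessian} together with hypotheses~(a) and~(i), $L$ is a self-adjoint operator on $\hat E$ with domain $\Dcal(T^2)=E_2$. Hypotheses~(ii)--(iv) and Lemma~\ref{lem:morseindex} then ensure that $K:=\Ker L = T_{u_*}\Ocal_{u_*}$ is finite-dimensional (of dimension $\dim\frak g_{\mu_*}$), that $0$ is an isolated point of the spectrum, and that the negative spectral subspace is finite-dimensional. Letting $P\colon\hat E\to K^\perp$ denote the orthogonal projection with respect to $\scalar$, the self-adjointness of $L$ and the spectral gap then make $L|_{K^\perp\cap E_2}\colon K^\perp\cap E_2\to K^\perp$ a topological isomorphism.

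I would then apply the implicit function theorem to
\[
\Phi\colon\frak g\times(K^\perp\cap E_2)\to K^\perp,\qquad \Phi(\xi,v):=P\,\nabla\mathcal L_\xi(u_*+v),
\]
which is of class $C^1$ by hypotheses~(b)--(c). Since $u_*$ solves the stationary equation, $\Phi(\xi_*,0)=0$, and one computes $D_v\Phi(\xi_*,0)=L|_{K^\perp\cap E_2}$, the isomorphism above. The IFT thus yields an open neighbourhood $\Omega\subset\frak g$ of $\xi_*$ and a $C^1$ map $\xi\in\Omega\mapsto v(\xi)\in K^\perp\cap E_2$ with $v(\xi_*)=0$ and $P\,\nabla\mathcal L_\xi(u_*+v(\xi))=0$. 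Setting $u_\xi:=u_*+v(\xi)$, there remains to show that the residual $r(\xi):=\nabla\mathcal L_\xi(u_\xi)\in K$ actually vanishes --- this is the main obstacle.

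I would handle this residual equation via a Noether-type identity. Differentiating the relations $H\circ\Phi_g=H$ and $F\circ\Phi_g=\mathrm{Ad}_g^*F$ at $g=e$ in the direction $\eta\in\frak g$, and using hypothesis~(d) to make sense of $X_\eta(u)\in E$ for $u\in E_2$, gives
\[
\scal{\nabla\mathcal L_\xi(u)}{X_\eta(u)}=D_u\mathcal L_\xi\bigl(X_\eta(u)\bigr)=F(u)([\eta,\xi]).
\]
Evaluating at $u=u_\xi$ with $\eta\in\frak g_{\mu_\xi}$, where $\mu_\xi:=F(u_\xi)$, the right-hand side vanishes because $\mathrm{ad}_\eta^*\mu_\xi=0$. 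Consequently $r(\xi)\in K$ is $\scalar$-orthogonal to $T_{u_\xi}\Ocal_{u_\xi}=\{X_\eta(u_\xi)\mid\eta\in\frak g_{\mu_\xi}\}$.

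To conclude I would appeal to a continuity/dimension-count argument relying crucially on hypothesis~(f). Regularity of $F$ at $u_*$ (hypothesis~(e)) persists on a neighbourhood, so $\dim T_{u_\xi}\Ocal_{u_\xi}=\dim\frak g_{\mu_\xi}$; by~(f), this equals $\dim\frak g_{\mu_*}=\dim K$. Since the family of linear maps $\eta\in\frak g\mapsto\mathrm{ad}_\eta^*\mu$ has constant rank near $\mu_*$, its kernel $\frak g_\mu$ varies continuously with $\mu$; combined with hypothesis~(d) and the continuity of $\xi\mapsto u_\xi$, the subspace $T_{u_\xi}\Ocal_{u_\xi}\subset\hat E$ depends continuously on $\xi$ and equals $K$ at $\xi=\xi_*$. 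Shrinking $\Omega$ if necessary, the bilinear pairing $(v,w)\in K\times T_{u_\xi}\Ocal_{u_\xi}\mapsto\scal{v}{w}$ therefore stays non-degenerate, and the orthogonality just established forces $r(\xi)=0$, so $u_\xi$ satisfies~\eqref{eq:statequation}. Injectivity of $\xi\mapsto u_\xi$ finally follows from the identity $u_{\xi_1}=u_{\xi_2}\Rightarrow(\xi_1-\xi_2)\cdot D_{u_{\xi_1}}F=0$ combined with the regularity of $F$; and the immersion property from~\eqref{eq:hessX*}, since $\eta\cdot\nabla_\xi u_\xi=0$ yields $\eta\cdot D_{u_\xi}F=0$, hence $\eta=0$ by regularity of $F$.
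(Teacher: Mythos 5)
Your proposal is correct and follows essentially the same route as the paper's proof: an implicit function theorem argument for the projection of $\nabla\Lcal_\xi(u_*+\cdot)$ onto $\left(T_{u_*}\Ocal_{u_*}\right)^\perp$, followed by the $G_{\mu_\xi}$-invariance of $\Lcal_\xi$ (your Noether identity) to annihilate the component along $T_{u_\xi}\Ocal_{u_\xi}$, and finally the continuity/dimension count based on hypothesis~(f) to show this kills the whole finite-dimensional residual. Your non-degenerate-pairing formulation of the last step is just a rephrasing of the paper's verification that the orthogonal projection $T_{u_\xi}\Ocal_{u_\xi}\to T_{u_*}\Ocal_{u_*}$ remains bijective for $\xi$ near $\xi_*$.
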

The conditions that are central here are~(ii)-(iii)-(iv): they are to be compared to the identically numbered conditions of Theorem~\ref{thm:localcoercivitygen}. The other conditions, notably (a)-(e), are technical and usually readily verified in applications. They are virtually automatic in finite dimensional problems. Condition~(f) is of purely group-theoretic nature. It  is known to hold on an open dense set for any Lie group. In fact, on such a set, the orbits all have the same maximal dimension and the Lie algebra $\frak g_\mu$ of the isotropy group of $\mu$ is commutative~\cite{dufver1969}. 
\begin{proof} Let $\hat \Space$ be the subspace of $\hat \Ban$ defined by
\begin{equation}\label{eq:defUhat}
\hat \Space=\{v\in \hat E\mid \langle v, u\rangle=0, \forall u\in T_{u_*}\mathcal O_{u_*}\}=T_{u_*}\Ocal_{u_*}^\perp
\end{equation} 
and consider the map 
\begin{align*}
\mathcal F :&\, \frak g \times (\Ban_2\cap \hat \Space) \to \hat \Space\\
&\,(\xi, w)\mapsto Q \nabla \mathcal L_{\xi}(u_*+w)
\end{align*}
where $Q$ is the orthogonal projector onto $\hat \Space\subset \hat \Ban$. Note that $(\Ban_2\cap \hat \Space, \|\cdot\|_2)$ and $(\hat \Space, \|\cdot\|)$ are Banach spaces.

By hypothesis~\ref{hyp:regularity}, $\mathcal F$ is $\mathcal C^1$. Moreover, it is clear that $\mathcal F(\xi_*,0)=0$. Hence, to apply the implicit function theorem, we remark that the derivative of $\mathcal F$ along $\Ban_2\cap \hat \Space$ at the point $(\xi_*,0)$ 
$$
\partial_w \mathcal F(\xi_*,0)= Q\nabla^2\mathcal L_{\xi_*}(u_*) 
$$
is an isomorphism from $(\Ban_2\cap \hat \Space, \|\cdot\|_2)$ to $(\hat \Space, \|\cdot\|)$. Indeed, as a result of the hypotheses~\ref{hyp:kernelper}, \ref{hyp:gap1} and~\ref{hyp:gap2}, $ Q\nabla^2\mathcal L_{\xi_*}(u_*)Q$ is a self-adjoint operator with bounded inverse.

Therefore, there exist $\mathcal V_{\xi_*}$ a neighbourhood of $\xi_*$ in $\frak g$, $\mathcal V_{u_*}$ a neighbourhood of $u_*$ in $\Ban_2\cap \hat\Space$, and a function $\Lambda: \mathcal V_{\xi_*}\to \mathcal V_{u_*}$ such that $\mathcal F(\xi, \Lambda(\xi))=0$. In particular, setting $u_\xi=u_*+\Lambda(\xi)\in \Ban_2$, we have
$$
Q \nabla \mathcal L_{\xi}(u_\xi)=0.
$$
This implies that for all $v\in \Space=\{v\in E\mid \langle v, u\rangle=0, \forall u\in T_{u_*}\mathcal O_{u_*}\}\subset \hat\Space$, 
$$
0=<v,Q \nabla \mathcal L_{\xi}(u_\xi)>=D_{u_\xi}\mathcal L_{\xi}(v).
$$
On the other hand, we know that for all $g\in G_{\mu_\xi}$
$$
\mathcal L_{\xi}(\Phi_g(u_\xi))=\mathcal L_{\xi}(u_\xi).
$$
Now, let $\eta\in \frak g_{\mu_\xi}$ and define $t\to g(t)=\exp(t\eta)\in G_{\mu_\xi}$. Then  $\mathcal L_{\xi}(\Phi_{g(t)}(u_\xi))=\mathcal L_{\xi}(u_\xi)$, so that, taking the derivative with respect to $t$ (which is possible because of hypothesis~\ref{hyp:derivative}), it follows that 
$$
D_{u_\xi}\mathcal L_{\xi}\left(X_{\eta}(u_\xi)\right)=0.
$$ 
This means that $D_{u_\xi}\mathcal L_{\xi}\left(v\right)=0$ for all $v\in T_{u_\xi}\Ocal_{u_\xi}$.

To conclude it is sufficient to prove that $\Ban=\Space\oplus T_{u_\xi}\Ocal_{u_\xi}$ for all $\xi$ in a neighbourhood of $\xi_*$. 

First of all, using the fact that  $\hat E=T_{u_*}\mathcal O_{u_*}\oplus_\perp T_{u_*}\mathcal O_{u_*}^\perp= T_{u_*}\mathcal O_{u_*}\oplus_\perp \hat \Space$, we prove $\hat \Ban= \hat \Space \oplus T_{u_\xi}\Ocal_{u_\xi}$.
Since $\mu_*=F(u_*)$ is a regular point in $\frak g^*$, one can choose, for every $\mu$ in a neighbourhood $\tilde{\mathcal M}$ of $\mu_*$, a basis  $\ell_i(\mu)\in \frak g_\mu$, for $i=1\dots m'$, smoothly in $\mu$. One can then construct
$$
\Xi: (\eta,\mu)\in \R^{m'}\times \tilde{\mathcal M}\to \Xi(y,\mu)=\sum_{i=1}^{m'} \eta_i \ell_i(\mu)\in \frak g_\mu\subset \frak g.
$$
Let $e_1, \ldots,e_{m'}$ the canonical basis of $\R^{m'}$. For all $\xi\in \frak g$ such that $u_\xi$ is sufficiently close to $u_*$, we can define, using~\eqref{eq:xxi},
\begin{equation}\label{eq:xli}
X_i(u_\xi)=\frac{d}{dt}\Phi_{\exp(\Xi(te_i, F))}(u_\xi)_{\mid t=0}= X_{\ell_i(F(u_\xi))}(u_\xi)
\end{equation}
Note that $X_i(u_\xi)$ are linearly independent and $\mathrm{span}\{X_i(u_\xi)\}=T_{u_\xi}\Ocal_{u_\xi}$. Then, $\mathrm{dim}T_{u_\xi}\Ocal_{u_\xi}=m'=\mathrm{dim}T_{u_*}\Ocal_{u_*}$. 

Next, writing $P=I-Q$ the orthogonal projector onto $T_{u_*}\Ocal_{u_*}$, we prove $P: T_{u_\xi}\Ocal_{u_\xi}\to T_{u_*}\Ocal_{u_*}$ is a bijection. Noting that the matrix of $P: T_{u_\xi}\Ocal_{u_\xi}\to T_{u_*}\Ocal_{u_*}$, given by 
$$
\langle X_i(u_*), P X_i(u_\xi)\rangle
$$
is invertible for $\xi=\xi_*$, this remains true  by continuity for $\xi$ close to $\xi_*$. 
As a conclusion, $\hat \Ban= \hat \Space \oplus T_{u_\xi}\Ocal_{u_\xi}$.

Finally, it is clear that $\Ban=\Space\oplus T_{u_\xi}\Ocal_{u_\xi}$. Since $\hat \Ban= \hat \Space \oplus T_{u_\xi}\Ocal_{u_\xi}$, we have, for each $u\in E$,
$$
u=\ell+k,
$$
with $k\in T_{u_\xi}\mathcal O_{u_\xi}\subset E, \ell\in \hat \Space$. Hence $\ell\in \Ban\cap  \hat \Space=\Space$, which concludes the argument. Note that, here as before, the orthogonality is with respect to the inner product $\langle\cdot,\cdot \rangle$. 

\end{proof}
For systems with a finite dimensional phase space, the persistence problem is addressed in~\cite{pat95, mon, lermansinger98}. The theorems provided there use various conditions on the group $G$ and on its action $\Phi$, on the nature of the isotropy groups $G_\xi$ and $G_\mu$, and finally on the Hessian of $H$ and/or $\mathcal L_\xi$ restricted to a suitable space. Note however that in applications to PDE the symplectic structure is always weak rather than strong. As a result, the finite dimensional arguments do not readily transpose to the infinite dimensional setting.  Indeed, various topological complications manifest themselves essentially as domain questions for unbounded operators and lack of differentiability of the dynamical vector field $X_H=\mathcal J^{-1}DH$ and of the vector fields generating the symmetries ($X_\xi$ in~\eqref{eq:xxi}), as we have seen. On the other hand, the infinite dimensional setting offers a simplification over the usual finite dimensional one because the phase space $E$, rather than being a manifold, is a vector space, and because the action of the invariance group is usually linear, facts that we have very much exploited in the above proof and elsewhere in this paper.

\section{Examples}  \label{s:examples}

\subsection{Stability of solitons for the nonlinear Schrödinger equation}\label{ss:nlssoliton}

We consider the focusing nonlinear Schr\"odinger equation with a power nonlinearity given by
\begin{equation}
 	\label{eqnlscubic}
 	\left\{
 	\begin{aligned}
 		&i{\partial_t}u(t,x)+{\Delta}u(t,x)+\vb u(t,x)\vb^{p-1}u(t,x)=0&\text{in } \R^d\\
 		&u(0,x)=u(x)&
 	\end{aligned}
 	\right.
\end{equation} 
with $u(t,x)\in\C$, $1<p<1+\frac{4}{d}$ and $d=1,2,3$.
This choice of parameters guarantees the global existence of solution to~\eqref{eqnlscubic} in $H^1(\R^d)$ (see \cite{cazenave2003}).

Equation \eqref{eqnlscubic} is the Hamiltonian differential equation associated to the Hamiltonian 
\begin{equation}
	\label{eqenergynls}
	H(u)=\frac{1}{2}\int_{\R^d}\vb \nabla u(x)\vb^2\diff x-\frac{1}{p+1}\int_{\R^d}\vb u(x)\vb^{p+1}\diff x.
\end{equation}

Next, let $G=\R\times \R^d$ and define its action on $E=H^1(\R^d,\C)$ via
		\begin{equation}\label{eqnlsinvariance}
		\forall u\in H^1(\R^d),\quad \left(\Phi_{\gamma_1,\gamma_2}(u)\right)(x)=e^{-i\gamma_1} u(x-\gamma_2).
	\end{equation}
Clearly, $H\circ\Phi_g=H$ and the group $G$ is an invariance group for the dynamics and the quantities 
\begin{align}
	\label{eqL2norm}
	&F_1(u)=\frac{1}{2}\int_{\R^d}\vb u(x)\vb^2\diff x\\
	\label{eqmomentum}
	&F_{1+j}(u)=\frac12\int_{\R^d}u^*\left(\frac{1}{i}\partial_{x_j}\right) u\diff x
\end{align}
for $j=1,\dots,d$, are the corresponding constants of the motion (see \cite{cazenave2003}).

The family of solitary waves
\begin{equation}
 	\label{eqsolitarywaves}
 	u_{\omega,c}(x)=e^{i \frac{c}{2}\cdot x}u_\omega(x)
\end{equation} 
with $c\in \R^d$ and $u_\omega$ the unique positive solution (see~\cite{tao_book} for more details) to 
\begin{equation}
	\label{eqsoliton}
	\Delta u_\omega+|u_{\omega}|^{p-1}u_\omega=-\omega u_{\omega}
\end{equation}
with $\omega\in \R$, $\omega<0$, are $G$-relative equilibria of \eqref{eqnlscubic}. Indeed, if we define $\mathcal L_\xi$ by 
\begin{equation}
	\label{defL}
	\mathcal L_\xi(u)=H(u)-\xi_1 F_1(u)-\sum_{j=1}^d \xi_{j+1} F_j(u),
\end{equation} 
we can easily verify that $u_{c,\omega}$ is a solution to the stationary equation $D_{u}\mathcal L_\xi=0$ with $\xi=(\omega-\frac{|c|^2}{4},c)$. In other words, for each $\xi\in \Omega=\left\{(\xi_1,\hat \xi)\in \R\times \R^d, \xi_1+\frac{|\hat \xi|^2}{4}<0\right\}$, 
\begin{equation*}
 	u_{\xi}(x)=e^{i \frac{\hat \xi}{2}\cdot x}u_{\xi_1+\frac{|\hat \xi|^2}{4}}(x)
\end{equation*} 
is a $G$-relative equilibrium of \eqref{eqnlscubic} with $\mu_\xi=F(u_\xi)$. Note that, since $G$ is commutative, $G_{\mu_\xi}=G$. Here, we use the notation $\hat \xi=(\xi_2,\ldots\xi_{d+1})$.
Note that, if $d=1$ and $p=3$, the unique positive solution of \eqref{eqsoliton} is  explicit:
\begin{equation}
	\label{sold1}
	u_\omega(x)=\sqrt{-2\omega}\,\sech(\sqrt{-\omega}x).
\end{equation}  
The $G$-orbit of the initial condition $u_{\xi}(x)$ is given by 
\begin{equation}
	\label{eqGorbit}
	\Ocal_{u_{\xi}}=\big\{e^{-i\gamma_1}u_\xi(x-\hat \gamma), (\gamma_1,\hat \gamma)\in G_{\mu_\xi}\big\}.
\end{equation}
Our goal is to investigate the orbital stability of these relative equilibria and in particular to obtain the coercivity of $\mathcal L_\xi$ by means of Theorem~\ref{thm:localcoercivitygen}. This, together with Proposition~\ref{prop:hessianestimate} and the results of~\cite{debgenrot15}, leads to the orbital stability.  Hypotheses A, B and C are easily seen to be satisfied, with $\Dcal=H^3(\R^d)$. Note in particular that, since $p>1$, $H\in C^2(E)$. Also, we use for $\langle\cdot,\cdot\rangle$ in Hypothesis~B the   usual $L^2$-scalar product, so that $\hat E=L^2(\R^d,\C)$ (viewed as a real Hilbert space).
To check the further hypotheses of Theorem~\ref{thm:localcoercivitygen}, we start by computing $D^2_{u_\xi}\mathcal L_{\xi}(v,w)$. A straightforward calculation gives
\begin{align*}
D^2_{u_\xi}\mathcal L_{\xi}(v,w)=&\,\re\left[\int_{\R^d}(-\Delta w)v^*-\int_{\R^d}|u_\xi|^{p-1} wv^*-\frac{p-1}{2}\int_{\R^d}|u_{\xi}|^{p-3}(w^*u_\xi+w u_\xi^*)(u_\xi v^*)\right.\\
&\left.-\xi_1\int_{\R^d}w v^*-\sum_{j=1}^d\xi_{j+1}\int_{\R^d}\left(\frac{1}{i}\partial_{x_j}w \right) v^*\right].
\end{align*}
Hence, writing $v(x)=e^{i\frac{c}{2}\cdot x}\tilde v(x)$ and $w(x)=e^{i\frac{c}{2}\cdot x}\tilde w(x)$, we obtain 
\begin{align*}
D^2_{u_\xi}\mathcal L_{\xi}(v,w)=&\,\re\left[\int_{\R^d}(-\Delta \tilde w) \tilde v^*- \int_{\R^d}u_\omega^{p-1}\tilde w \tilde v^*-\frac{p-1}{2}\int_{\R^d}u_{\omega}^{p-1}(\tilde w^*+\tilde w) \tilde v^*-\omega\int_{\R^d}\tilde w \tilde v^*\right]\\
=&\, \langle \mathbbm{L} \tilde w, \tilde v\rangle=\langle \nabla^2 \mathcal L_\xi(u_\xi)  w,  v\rangle
\end{align*}
with
\begin{equation}
	\label{defnabla2} 
	\mathbbm{L} \tilde w= \begin{pmatrix} -\Delta -p u_\omega^{p-1} -\omega & 0 \\
	0 & -\Delta - u_\omega^{p-1}-\omega \end{pmatrix} 
	\begin{pmatrix} \re (\tilde w)\\ \im (\tilde w) \end{pmatrix}.
\end{equation}
It then follows that the operator $\nabla^2 \mathcal L_\xi(u_\xi)$ introduced in Lemma~\ref{lem:existenceHessian} is given by  
$$
\nabla^2 \mathcal L_\xi(u_\xi)= U^*\mathbbm{L}U \quad\mathrm{with} \quad U=\begin{pmatrix} \cos\left(\frac{c}{2}\cdot x\right)& \sin\left(\frac{c}{2}\cdot x\right) \\
	-\sin\left(\frac{c}{2}\cdot x\right)  &\cos\left(\frac{c}{2}\cdot x\right) \end{pmatrix}.
$$
Clearly, the estimate~\eqref{eq:condGgen2} is satisfied. Let $L_+$ and $L_-$ be defined by 
\begin{align*}
L_+=-\Delta -p u_\omega^{p-1} -\omega,\qquad
L_-= -\Delta - u_\omega^{p-1}-\omega.
\end{align*}
Since $u_\omega$ is the unique positive solution to \eqref{eqsoliton}, using a decomposition in spherical harmonics and proceeding as in \cite[Lemma 4.1]{wei-96}, one proves that $\Ker(L_+)=\mathrm{span}\{\partial_{x_1}u_\omega, \dots,\partial_{x_n} u_\omega\}$ and $\Ker(L_-)=\mathrm{span}\{u_\omega\}$. Moreover, since $u_\omega$ is strictly positive, one concludes that $0$ is the first eigenvalue of $L_-$. Similarly, one proves $L_+$ has exactly one negative eigenvalue. 

As a consequence, $\Ker(D^2_{u_\xi}\mathcal L_{\xi})=T_{u_\xi}\Ocal_{u_\xi}$ and $n(D^2_{u_\xi}\mathcal L_{\xi})=1$.

Next, we have to show that $1=n(D^2_{u_\xi}\mathcal L_{\xi})=p(D^2_\xi W)$. Since $p(D^2_\xi W)\le n(D^2_{u_\xi}\mathcal L_{\xi})$, we already know that $D^2_\xi W$, which is a $(d+1)\times (d+1)$ matrix, has at least $d$ negative eigenvalue $\lambda_1,\ldots,\lambda_d$. Let $\lambda_0$ the remaining eigenvalue, then
$$
(-1)^d\mathrm{sign}(\lambda_0)=\mathrm{sign}(\lambda_0\lambda_1\cdots\lambda_d)=\mathrm{sign}(\det(D^2_\xi W)).
$$ 
A straightforward calculation gives 
\begin{align}
	\label{functW}
	W(\xi)& =H(u_\xi)-\xi_1 F_1(u_\xi)-\sum_{j=1}^d \xi_{j+1}F_{j+1}(u_\xi)\nonumber\\
	&=\frac{1}{2}\int_{\R^d}\vb \nabla u_\omega(x)\vb^2\diff x-\frac{1}{p+1}\int_{\R^d}\vb u_\omega(x)\vb^{p+1}\diff x -\frac{\omega}{2}\int_{\R^d}\vb u_\omega(x)\vb^2\diff x.
\end{align}
Therefore, $W(\xi)$ depends only on the single parameter $\omega$ which is itself a function of $\xi$. As a consequence, for each $k=1,\ldots,d+1$,
$$
\frac{\partial W}{\partial \xi_k}=\frac{\partial W}{\partial \omega}\frac{\partial \omega}{\partial \xi_k}=\left(-\frac{1}{2}\int_{\R^d}\vb u_\omega(x)\vb^2\diff x\right)\frac{\partial \omega}{\partial \xi_k}
$$
and, writing $f(\omega)=\left(-\frac{1}{2}\int_{\R^d}\vb u_\omega(x)\vb^2\diff x\right)$,
$$
\frac{\partial^2 W}{\partial \xi_\ell\partial \xi_k}=\frac{\partial f}{\partial \omega}\frac{\partial \omega}{\partial \xi_\ell}\frac{\partial \omega}{\partial \xi_k}+f(\omega)\frac{\partial^2 \omega}{\partial \xi_\ell\partial \xi_k}
$$
for any $\ell=1,\ldots,d+1$. Recalling $\omega(\xi)=\xi_1+\frac{|\hat \xi|^2}{4}$, this gives
\begin{equation*}
\left\{
\begin{aligned}
&\frac{\partial^2 W}{\partial \xi_1^2}=\frac{\partial f}{\partial \omega}\\
&\frac{\partial^2 W}{\partial \xi_1\partial \xi_k}=\frac{\xi_k}{2}\frac{\partial f}{\partial \omega} &\text{ for } k=2\ldots,d+1\\ 
&\frac{\partial^2 W}{\partial \xi_\ell\partial \xi_k}=\frac{\xi_\ell}{2}\frac{\xi_k}{2}\frac{\partial f}{\partial \omega}+\frac{1}{2}\delta_{k\ell}f(\omega) &\text{ for } \ell, k=2\ldots,d+1
\end{aligned}
\right.
\end{equation*}
and
\begin{equation*}
D^2_\xi W=\begin{pmatrix} \frac{\partial f}{\partial \omega}& \frac{\hat \xi}{2} \frac{\partial f}{\partial \omega}\\
 \frac{\hat \xi^T}{2} \frac{\partial f}{\partial \omega} & \frac{\hat \xi^T}{2}  \frac{\hat \xi}{2} \frac{\partial f}{\partial \omega}+\frac{1}{2}f(\omega)\bbone_{d\times d}
\end{pmatrix}.
\end{equation*}
Hence
\begin{equation*}
\mathrm{det} (D^2_\xi W)=\mathrm{det}\begin{pmatrix} \frac{\partial f}{\partial \omega}& \frac{\hat \xi}{2} \frac{\partial f}{\partial \omega}\\
0_{d\times 1} & \frac{1}{2}f(\omega)\bbone_{d\times d}
\end{pmatrix}
\end{equation*} 
so that
\begin{equation*}
\det(D^2_\xi W)=\left(\frac 12 f(\omega)\right)^d\frac{\partial f}{\partial \omega}
\text{ and } 
\mathrm{sign}(\det(D^2_\xi W))=(-1)^d\mathrm{sign}\left(\frac{\partial f}{\partial \omega}\right).
\end{equation*}

As a consequence, 
$
\mathrm{sign}\left(\frac{\partial f}{\partial \omega}\right)=\mathrm{sign}(\lambda_0).
$
This implies that $p(D^2_\xi W)=1$ if and only if $\frac{\partial f}{\partial \omega}>0$. Using the definition of $f(
\omega)$, we can conclude that $p(D^2_\xi W)=1$ if and only if
\begin{equation}
	\label{condnormschr}
	\frac{\partial }{\partial \omega}\int_{\R^d}|u_\omega|^2\diff x<0.
\end{equation}
Condition~\eqref{condnormschr} can be rewritten as
\begin{align*}
\frac{\partial }{\partial \omega}\int_{\R^d}|u_\omega|^2\diff x=2\int_{\R^d}u_\omega\frac{\partial u_{\omega}}{\partial \omega}\diff x=2\int_{\R^d}u_\omega L_+^{-1}u_\omega\diff x <0
\end{align*}
with $L_+$ defined above. Hence, let $S$ be the scaling operator $S=x\cdot \nabla +\frac{2}{p-1}$. A straightforward calculation gives $L_+ Su_\omega=2\omega u_\omega$. Indeed, if $u_\omega$ is a solution to~\eqref{eqsoliton}, then  $u_{\omega,\lambda}(x):=u_\omega(\lambda x)$ satisfies
\begin{equation*}
	\Delta u_{\omega,\lambda}+\lambda^2 |u_{\omega,\lambda}|^{p-1}u_{\omega,\lambda}=-\omega u_{\omega,\lambda}
\end{equation*}
for all $\lambda\in \R_+\smallsetminus\{0\}$. Hence, by taking the derivative of this equation with respect to $\lambda$ and choosing $\lambda=1$, we obtain $L_+ Su_\omega=2\omega u_\omega$. As a consequence, 
\begin{align*}
2\int_{\R^d}u_\omega L_+^{-1}u_\omega\diff x=\frac{1}{\omega}\int_{\R^d}u_\omega Su_\omega\diff x=\frac{1}{\omega}\int_{\R^d}u_\omega\left(x\cdot \nabla +\frac{2}{p-1}\right)u_\omega\diff x=\frac{1}{\omega}\left(-\frac d2 +\frac{2}{p-1}\right)\int_{\R^d}|u_\omega|^2\diff x
\end{align*}
which is strictly negative if and only if $p<1+\frac 4d$. 

As a consequence, if $1<p<1+\frac{4}{d}$, Theorem~\ref{thm:localcoercivitygen} applies and gives the local coercivity of $D^2_{u_\xi}\mathcal L_\xi$. We then have:
\begin{theorem}
	\label{thmstabilitySc} Let $d=1,2,3$ and $1<p<1+\frac 4d$. The solitary wave $u_{\omega,c}$, defined as in~\eqref{eqsolitarywaves} is an orbitally stable relative equilibrium. 
\end{theorem}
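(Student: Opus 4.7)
The strategy is to invoke Theorem~\ref{thm:localcoercivitygen}, then Proposition~\ref{prop:hessianestimate}, and finally the abstract orbital stability theorems (Theorems~10 and~11) of~\cite{debgenrot15}. The preceding analysis has in effect already verified almost every hypothesis required, so the task is to assemble the pieces and close the Lyapunov argument.

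First I would record the framework. Take $\Ban = H^1(\R^d,\C)$ as a real Hilbert space with $\langle u,v\rangle = \mathrm{Re}\int u^* v$, $\Dcal = H^3(\R^d)$, and the standard Schr\"odinger symplector. Global $H^1$ well-posedness in the subcritical range $1 < p < 1 + 4/d$ is classical~\cite{cazenave2003} and supplies the Hamiltonian flow. Hypotheses~A and~B are immediate: $H \in C^2(H^1)$ (since $p>1$ and $H^1 \hookrightarrow L^{p+1}$ in the subcritical range for $d\le 3$), the group $G = \R \times \R^d$ acts linearly, isometrically and symplectically via~\eqref{eqnlsinvariance}, and the $L^2$-gradients of $F_1, \dots, F_{d+1}$ are $u$ and $\tfrac{1}{i}\partial_{x_j} u$, both lying in $\DomH \supset H^2$ whenever the basepoint lies in $H^3$. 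For Hypothesis~C it suffices to exhibit the explicit $C^1$ family $\xi \mapsto u_\xi$ on the open set $\Omega = \{\xi_1 + |\hat\xi|^2/4 < 0\}$ given by~\eqref{eqsolitarywaves}, bypassing the persistence machinery of Section~\ref{s:persistence} entirely.

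Next, the four core hypotheses of Theorem~\ref{thm:localcoercivitygen} have already been established in the text preceding the statement: the quadratic lower bound~\eqref{eq:condGgen2} is immediate from the explicit form of $\mathbbm{L}$; the kernel identification $\Ker D^2_{u_\xi}\mathcal L_\xi = T_{u_\xi}\Ocal_{u_\xi}$ and the Morse index $n(D^2_{u_\xi}\mathcal L_\xi) = 1$ come from the spherical-harmonic analysis of $L_\pm$ \`a la Weinstein~\cite{wei-96}; the spectral-gap condition $\inf(\sigma(\nabla^2\mathcal L_\xi(u_\xi)) \cap (0,\infty)) > 0$ follows because $\sigma_{\mathrm{ess}}(L_\pm) = [-\omega,+\infty)$ with only finitely many discrete eigenvalues below; and the slope condition $p(D^2_\xi W) = 1$ reduces, via the Galilean factorisation $W(\xi) = W(\omega)$ with $\omega = \xi_1 + |\hat\xi|^2/4$, to the sign of $\partial_\omega \|u_\omega\|_{L^2}^2$, which the Pohozaev computation $L_+ S u_\omega = 2\omega u_\omega$ with $S = x \cdot \nabla + \tfrac{2}{p-1}$ converts into exactly $p < 1 + 4/d$.

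Theorem~\ref{thm:localcoercivitygen} then yields~\eqref{eq:localcoercivitygen}. Together with the $C^2$ dependence of $g \mapsto \Phi_g(u_\xi)$ on the commutative group $G_{\mu_\xi} = G$ (trivial from~\eqref{eqnlsinvariance} since $u_\xi \in H^3$) and the $L^2$-gradient identification just recalled, all hypotheses of Proposition~\ref{prop:hessianestimate} are met, producing the Lyapunov estimate~\eqref{eq:coercivityglobal}: $H$ attains a strict local minimum on $\Ocal_{u_\xi}$ when restricted to $\Sigma_{\mu_\xi}$. Orbital stability then follows by the standard tube argument of Theorems~10 and~11 of~\cite{debgenrot15}: any $H^1$-trajectory $u(t)$ with initial datum close to $u_\xi$ conserves $H$ and $F$, so $F(u(t))$ stays near $\mu_\xi$; projecting $u(t)$ onto $\Sigma_{\mu_\xi}$ by a small translation and phase rotation and applying~\eqref{eq:coercivityglobal} then bounds $\mathrm{dist}_{H^1}(u(t), \Ocal_{u_\xi})$ by $O(\sqrt{|H(u(0)) - H(u_\xi)| + |F(u(0)) - \mu_\xi|})$ uniformly in $t$. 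The only delicate point in this last step is constructing the projection onto $\Sigma_{\mu_\xi}$, handled in~\cite{debgenrot15} by an implicit function theorem entirely analogous to the one used in the proof of Proposition~\ref{prop:hessianestimate}; no substantial obstacle is expected.
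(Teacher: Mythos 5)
Your proposal follows essentially the same route as the paper: verify Hypotheses A, B, C with the explicit soliton family on $\Omega=\{\xi_1+|\hat\xi|^2/4<0\}$, check conditions (i)--(v) of Theorem~\ref{thm:localcoercivitygen} via the spectral analysis of $L_\pm$ and the Pohozaev identity $L_+Su_\omega=2\omega u_\omega$, then pass through Proposition~\ref{prop:hessianestimate} to the abstract stability theorems of~\cite{debgenrot15}.

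There is one point you gloss over that the paper is careful about. The framework of~\cite{debgenrot15} requires the flow to preserve the domain $\Dcal=H^3(\R^d)$, and for $1<p<3$ (which is the entire admissible range when $d=2,3$, and part of it when $d=1$) the nonlinearity $|u|^{p-1}u$ is not smooth enough to propagate $H^3$ regularity; hence Theorems~10 and~11 of~\cite{debgenrot15} cannot be invoked verbatim there, contrary to your claim that the tube argument applies with ``no substantial obstacle.'' The paper resolves this by noting that the proof of Theorem~10 of~\cite{debgenrot15} can be adapted to the power-nonlinearity NLS once the coercivity of $\Lcal_\xi$ is in hand; you should either restrict the direct invocation to $d=1$, $3\le p<5$, or explicitly carry out (or at least acknowledge) that adaptation for the remaining range.
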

When $d=1$ and $3\leq p <5$, this follows from Theorem~\ref{thm:localcoercivitygen} together with Proposition~\ref{prop:hessianestimate} and the results of~\cite{debgenrot15}. When $d=1,2,3$ and $1<p<3$, the nonlinearity is not sufficiently smooth to ensure the ``propagation of the regularity" for initial conditions in $\Dcal=H^3(\R^d)$, as required in~\cite{debgenrot15} (see \cite{cazenave2003}). Hence, the results of~\cite{debgenrot15} cannot be directly applied in this case. Nevertheless, to prove the orbital stability once one has the coercivity of $\mathcal L_{\xi}$, we can use Theorem~10 of~\cite{debgenrot15} the proof of which can be easily adapted in the case of the Schr\"odinger equation with a power nonlinearity.

\begin{remark}\mbox{}
\begin{enumerate}
\item As announced at the end of Section~\ref{s:maintheoremhil}, $\nabla^2\Lcal_\xi$ is an unbounded partial differential operator and we are in the setting of Theorem~\ref{thm:localcoercivitygen}, not of Theorem~\ref{thm:localcoercivity}, nor of Theorem~\ref{thm:gssII} below, which comes from~\cite{gssII}.\\
\item A proof of the orbital stability of the soliton of the focusing NLSE for $1<p<1+\frac 4d$, $d=1,2,3$ was given originally using concentration-compactness arguments in~\cite{cazlions} and with a variational method in \cite{weinstein86}. Finally, in~\cite{gssII}, some of the spectral arguments we used to control $\nabla^2\Lcal_\xi$ are provided, but a complete proof of orbital stability is lacking for reasons further explained in Section~\ref{s:comparegss}.
\end{enumerate}
\end{remark}

%
%
\subsection{Stability of solitons for a system of  coupled nonlinear Schrödinger equations}\label{ss:cnlssoliton}

We consider the system of two coupled nonlinear Schrödinger equations given by
\begin{equation}
	\label{eqnlssys}
	\left\{
	\begin{aligned}
		&i\partial_t u_1(t,x) +\Delta u_1(t,x) +(\alpha |u_1(t,x)|^2+\delta |u_2(t,x)|^2) u_1(t,x)=0\\
		&i\partial_t u_2(t,x) +\Delta u_2(t,x) +(\delta |u_1(t,x)|^2+\gamma |u_2(t,x)|^2) u_2(t,x)=0\\
		&u(0,x)=u(x)
	\end{aligned}
	\right.
\end{equation}
with $u(t,x)=\begin{pmatrix}u_1(t,x)\\u_2(t,x)\end{pmatrix}:\R\times\R^d\to \C^2$ and $d=1,2,3$. Here, $\alpha, \gamma\in \R_+$ and $\delta \in \R_+\smallsetminus\{0\}$ are parameters of the model.

Coupled NLSEs have been used to model nonlinear wave propagation in a variety of physical systems. In nonlinear optics, they describe light propagation in birefringent fibers~\cite{agrawal}. In the study of ocean waves, they have been proposed as a model for the generation of rogue waves in crossing sea states : these are two-component wave systems with different directions of propagation (See~\cite{onorato} and references therein). They also appear in the study of two-component Bose-Einstein condensates~\cite{antoineduboscq, pitstr2003}. A central topic in each of these situations is the stability or instability of solitions and plane wave solutions of those equations.  We consider solitons in this subsection, and plane waves in the next one.

In dimension $d=1,2,3$, the Cauchy problem~\eqref{eqnlssys} is locally well posed in $H^1(\R^d,\C^2)$ \cite{caz-96}. Moreover, it has been proved in~\cite{fanmon-07} that, in dimension $d=1$, \eqref{eqnlssys} is globally well posed in $H^1(\R^d,\C^2)$.

Equation \eqref{eqnlssys} is the Hamiltonian differential equation associated to the function $H$ defined by 
\begin{align}
	\label{eqenergysys}
	H(u)=&\,\frac{1}{2}\int_{\R^d} \left(\vb\nabla u_1(x)\vb^2+\vb\nabla u_2(x)\vb^2\right)\diff x\nonumber\\
	&-\frac{1}{4}\int_{\R^d}\left( \alpha\vb u_1(x)\vb^4+2\delta|u_1(x)|^2|u_2(x)|^2+\gamma \vb u_2(x)\vb^4\right)\diff x.
\end{align}
Let $G=\R\times \R\times \R^d$ and define its action on $E=H^1(\R^d,\C^2)$ via
		\begin{equation}\label{eqmaninvariance}
		\forall u\in H^1(\R^d,\C^2),\quad \left(\Phi_{\gamma_1,\gamma_2,\tau}(u)\right)(x)=\begin{pmatrix}e^{-i\gamma_1} u_1(x-\tau)\\ e^{-i\gamma_2} u_2(x-\tau) \end{pmatrix}.
	\end{equation}
The group $G$ is an invariance group for the dynamics and the quantities 
\begin{align}
	\label{eqL2norm1}
	&F_1(u)=\frac{1}{2}\int_{\R^d}\vb u_1(x)\vb^2\diff x\\
	\label{eqL2norm2}
	&F_2(u)=\frac{1}{2}\int_{\R^d}\vb u_2(x)\vb^2\diff x\\
	\label{eqmomentumman}
	&F_{2+j}(u)
	=\frac12\int_{\R^d}u_1^*\cdot\left(\frac{1}{i}\partial_{x_j}\right) u_1\diff x+\frac12\int_{\R^d}u_2^*\cdot\left(\frac{1}{i}\partial_{x_j}\right) u_2\diff x
\end{align}
for $j=1,\dots,d$, are the corresponding constants of the motion. 
The family of solitary waves
\begin{equation}
 	\label{eqsolitarywavessys}
 	u_{\omega_1,\omega_2,c}(x)=e^{i \frac{c}{2}\cdot x}\Phi_{\omega_1,\omega_2}\\
\end{equation} 
with $c\in \R^d$ and $\Phi_{\omega_1,\omega_2}=\begin{pmatrix}\Phi_1\\\Phi_2\end{pmatrix}$ a solution to
\begin{equation}
	\label{eqsolsys}
	\left\{
	\begin{aligned}
		&-\Delta \Phi_1 -\omega_1 \Phi_1=(\alpha |\Phi_1|^2+\delta |\Phi_2|^2)\Phi_1\\
		&-\Delta \Phi_2 -\omega_2 \Phi_2= (\delta |\Phi_1|^2+\gamma |\Phi_2|^2) \Phi_2
	\end{aligned}
	\right.
\end{equation}
with $\omega_1,\omega_2\in \R$, $\omega_1,\omega_2<0$, are $G$-relative equilibria of \eqref{eqnlssys}. Indeed, if we define $\mathcal L_\xi$ by 
\begin{equation}
	\label{defL}
	\mathcal L_\xi(u)=H(u)-\xi_1 F_1(u)-\xi_2 F_2(u)-\sum_{j=1}^d \xi_{j+2} F_j(u),
\end{equation} 
we can easily verify that $u_{c,\omega_1,\omega_2}$ is a solution to the stationary equation $D_{u}\mathcal L_\xi=0$ with $\xi=(\omega_1-\frac{|c|^2}{4},\omega_2-\frac{|c|^2}{4},c)$. 

In particular, if $\omega_1=\omega_2=\omega_*<0$, then 
\begin{equation}
	\label{solpartsys}
\Phi_{\omega_*,\omega_*}(x)=u_{\omega_*}(x)\begin{pmatrix}\zeta_1\\ \zeta_2\end{pmatrix}
\end{equation}
with
\begin{equation}
	\label{conditiondelta}
	\alpha\zeta_1^2+\delta\zeta_2^2=1,\qquad
	\delta\zeta_1^2+\gamma\zeta_2^2=1
\end{equation}
and $u_{\omega_*}$ the unique positive solution to
\begin{equation}
	\label{eqsolsys2}
	-\Delta \varphi-\omega_* \varphi=|\varphi|^2\varphi
\end{equation}
is a solution to \eqref{eqsolsys}. As a consequence, $u_{\xi_*}(x)=e^{i\frac{\hat\xi_*}{2}\cdot x}\Phi_{\omega_*,\omega_*}(x)$ with $\xi_*=(\omega_*-\frac{|c|^2}{4},\omega_*-\frac{|c|^2}{4},c)$ is a $G$-relative equilibrium of \eqref{eqnlssys} and our goal is to investigate its orbital stability by means of Theorem~\ref{thm:localcoercivitygen}.
 Note that this kind of solution exists only if $\delta\notin[\min(\alpha,\gamma),\max(\alpha,\gamma)]$ or if $\delta=\alpha=\gamma$ which corresponds to the integrable case. In what follows, we will assume $\delta <\min(\alpha,\gamma)$ or $\delta>\max(\alpha,\gamma)$. 

First of all, to apply Theorem~\ref{thm:localcoercivitygen}, we have to show the existence of a family of solutions to $D_{u_\xi} \mathcal L_\xi=0$ for each $\xi$ in a neighbourhood of $\xi_*$.
The existence of such a family of $G$-relative equilibria is obtained using Theorem~\ref{thm:persistenceb}.

As before, Hypotheses~A (by taking again $\Dcal=H^3(\R^d,\C^2)$ and the $L^2$-scalar product on $E=H^1(\R^d,\C^2)$) and~B are clearly satisfied.

A straightforward calculation gives
\begin{align*}
D^2_{u}\mathcal L_{\xi}(v,w)=&\,\re\left[\int_{\R^d}(-\Delta w_1)v_1^*+\int_{\R^d}(-\Delta w_2)v_2^*\right.\\
&\left.-\int_{\R^d}(\alpha|u_1|^2+\delta|u_2|^2)w_1v_1^*-\int_{\R^d}(\delta|u_1|^2+\gamma|u_2|^2)w_2v_2^*\right.\\
&\left.-\int_{\R^d}\left(\alpha(u_1w_1^*+u_1^*w_1)+\delta(u_2w_2^*+u_2^*w_2)\right)u_1v_1^*\right.\\
&\left.-\int_{\R^d}\left(\delta(u_1w_1^*+u_1^*w_1)+\gamma(u_2w_2^*+u_2^*w_2)\right)u_2v_2^*\right.\\
&\left.-\xi_1\int_{\R^d}w_1 v_1^*-\xi_2\int_{\R^d}w_2 v_2^*-\sum_{j=1}^d\xi_{j+2}\int_{\R^d}\left(\frac{1}{i}\partial_{x_j}w_1 \right) v_1^*+\left(\frac{1}{i}\partial_{x_j}w_2 \right) v_2^*\right].
\end{align*}
Writing $v(x)=e^{i\frac{c}{2}\cdot x}\tilde v(x)$ and $w(x)=e^{i\frac{c}{2}\cdot x}\tilde w(x)$ and using the particular form of $u_\xi$, we obtain 
\begin{align*}
D^2_{u_\xi}\mathcal L_{\xi}(v,w)=&\,\re\left[\int_{\R^d}(-\Delta \tilde w_1)\tilde v_1^*+\int_{\R^d}(-\Delta \tilde w_2)\tilde v_2^*\right.\\
&\left.-\int_{\R^d}(\alpha|\Phi_1|^2+\delta|\Phi_2|^2)\tilde w_1\tilde v_1^*-\int_{\R^d}(\delta|\Phi_1|^2+\gamma|\Phi_2|^2)\tilde w_2\tilde v_2^*\right.\\
&\left.-\int_{\R^d}\left(\alpha(\Phi_1\tilde w_1^*+\Phi_1^*\tilde w_1)+\delta(\Phi_2\tilde w_2^*+\Phi_2^*\tilde w_2)\right)\Phi_1\tilde v_1^*\right.\\
&\left.-\int_{\R^d}\left(\delta(\Phi_1\tilde w_1^*+\Phi_1^*\tilde w_1)+\gamma(\Phi_2\tilde w_2^*+\Phi_2^*\tilde w_2)\right)\Phi_2\tilde v_2^*\right.\\
&\left.-\omega_1\int_{\R^d}\tilde w_1 \tilde v_1^*-\omega_2\int_{\R^d}\tilde w_2 \tilde v_2^*\right].
\end{align*}
Hence, by using the definition of $u_{\xi_*}$, we obtain
\begin{align*}
D^2_{u_{\xi_*}}\mathcal L_{\xi_*}(v,w)=&\,\re\left[\int_{\R^d}(-\Delta \tilde w_1)\tilde v_1^*+\int_{\R^d}(-\Delta \tilde w_2)\tilde v_2^*-\int_{\R^d}u_{\omega_*}^2\tilde w_1\tilde v_1^*-\int_{\R^d}u_{\omega_*}^2\tilde w_2\tilde v_2^*\right.\\
&\left.-\int_{\R^d}\left(\alpha\zeta_1(\tilde w_1^*+\tilde w_1)+\delta\zeta_2(\tilde w_2^*+\tilde w_2)\right)\zeta_1u_{\omega_*}^2\tilde v_1^*\right.\\
&\left.-\int_{\R^d}\left(\delta\zeta_1(\tilde w_1^*+\tilde w_1)+\gamma\zeta_2(\tilde w_2^*+\tilde w_2)\right)\zeta_2u_{\omega_*}^2\tilde v_2^*\right.\\
&\left.-\omega_*\int_{\R^d}\tilde w_1\tilde v_1^*-\omega_*\int_{\R^d}\tilde w_2 \tilde v_2^*\right]=\, \langle \mathbbm{L} \tilde w, \tilde v\rangle=\langle \nabla^2 \mathcal L_{\xi_*}(u_{\xi_*}) w,  v\rangle
\end{align*}
with
\begin{equation}
	\label{defnabla2sys} 
	 \mathbbm{L}\tilde w= \begin{pmatrix} \mathbbm L_+ & {0}_{2\times 2}\\
	0_{2\times 2}& \mathbbm L_-\end{pmatrix} \tilde w,
\end{equation}
$\mathbbm{L}=U^*\nabla^2 \mathcal L_{\xi_*}(u_{\xi_*})U$ and $U$ a unitary matrix, 
and $\mathbbm L_+$, $\mathbbm L_-$ given by
\begin{align}
	\label{defLplus} 
	\mathbbm L_+&=\begin{pmatrix} -\Delta - u_{\omega_*}^2 -{\omega_*} - 2\alpha\zeta_1^2 u_{\omega_*}^2 & - 2\delta\zeta_1\zeta_2 u_{\omega_*}^2\\
	- 2\delta\zeta_1\zeta_2 u_{\omega_*}^2 & -\Delta - u_{\omega_*}^2 -{\omega_*} - 2\gamma\zeta_2^2 u_{\omega_*}^2\\
	\end{pmatrix},\\
	\label{defLminus} 
	\mathbbm L_-&= \begin{pmatrix} -\Delta -u_{\omega_*}^2-\omega_*&0\\  0 & -\Delta - u_{\omega_*}^2-\omega_*  \end{pmatrix}. 
\end{align}
Note that $\mathcal D(\nabla^2 \mathcal L_{\xi_*}(u_{\xi_*}))=H^2(\R^d)$. Next, since $u_{\omega_*}$ is the unique positive solution of \eqref{eqsolsys}, $L_-= -\Delta -u_{\omega_*}^2-\omega_*$ is a nonnegative operator and $\Ker(L_-)=\mathrm{span}\{u_{\omega_*}\}$. Hence, $$\Ker(\mathbbm L_-)=\mathrm{span}\left\{\begin{pmatrix}u_{\omega_*}\\0\end{pmatrix},\begin{pmatrix}0\\u_{\omega_*}\end{pmatrix}\right\}.$$

Next, to analyze the spectrum of $\mathbbm L_+$, it is convient to perform the orthogonal transformation defined by 
\begin{equation}
	\label{orthotrans}
	P=\frac{1}{\sqrt{\zeta_1^2+\zeta_2^2}}\begin{pmatrix}\zeta_1& \zeta_2\\ \zeta_2& -\zeta_1\end{pmatrix}
\end{equation}
which leads to
$$
P\mathbbm L_+P=\begin{pmatrix} -\Delta - 3u_{\omega_*}^2 -{\omega_*}  & 0\\
	0 & -\Delta - (3-2\delta(\zeta_1^2+\zeta_2^2))u_{\omega_*}^2 -{\omega_*}\\
	\end{pmatrix}:=\begin{pmatrix} L_+  & 0\\
	0 & L_{\delta}\\
	\end{pmatrix}.
$$
We know that the operator $L_+=-\nabla-3u_{\omega_*}^2-\omega_*$ has exactly one negative eigenvalue and that $\Ker(L_+)=\mathrm{span}\{\partial_{x_1}u_{\omega_*},\ldots, \partial_{x_d}u_{\omega_*}\}$. Moreover, from \cite[Lemma 4.1]{wei-96}, we can deduce that $\Ker(L_\delta)=\{0\}$ except if $\delta= 0$ or $\delta=\alpha=\gamma$. As a consequence, if $\delta \in \R_+^*\smallsetminus [\min(\alpha,\gamma),\max(\alpha,\gamma)]$, 
\begin{equation}
	\Ker(\mathbbm L_+)=\mathrm{span}\left\{P\begin{pmatrix}\partial_{x_j}u_{\omega_*}\\0\end{pmatrix},j=1,\ldots,d\right\}=\mathrm{span}\left\{\partial_{x_j}u_{\omega_*}\begin{pmatrix}\zeta_1\\\zeta_2\end{pmatrix},j=1,\ldots,d\right\}.
\end{equation}
This implies $\Ker(D^2_{u_{\xi_*}}\mathcal L_{\xi_*})=T_{u_{\xi_*}}\mathcal O_{u_{\xi_*}}$.

Now we have to count the negative eigenvalues of $L_\delta$. First of all, we remark that $$L_\delta=L_- - 2(1-\delta(\zeta_1^2+\zeta_2^2))u_{\omega_*}^2.$$
If $1-\delta(\zeta_1^2+\zeta_2^2)<0$, $L_\delta$ is clearly a positive operator. Then $n(L_\delta)=0$ and $n(D^2_{u_{\xi_*}}\mathcal L_{\xi_*})=1$. This corresponds to the case $\delta >\max(\alpha,\gamma)$.
If $\delta<\min{(\alpha,\gamma)}$, then $1-\delta(\delta_1^2+\delta_2^2)>0$ and
$$
\langle L_\delta u_{\omega_*},u_{\omega_*}\rangle=-2(1-\delta(\zeta_1^2+\zeta_2^2))\int_{\R^d}|u_{\omega_*}|^4<0
$$
so that $n(L_\delta)\ge1$. Since $L_\delta=L_+ +2\delta(\zeta_1^2+\zeta_2^2)u^2_{\omega_*}$, it is clear, by means of a min-max type argument, that $n(L_\delta)\le1$. Hence, $n(L_\delta)=1$ and $n(D^2_{u_{\xi_*}}\mathcal L_{\xi_*})=2$. In both cases  $n(D^2_{u_{\xi_*}}\mathcal L_{\xi_*})<+\infty$ and Theorem~\ref{thm:persistenceb} leads to the existence of a family of $G_{\mu_\xi}$-relative equilibria.

Next, to apply Theorem~\ref{thm:localcoercivitygen}, we have to show that $n(D^2_{u_{\xi_*}}\mathcal L_{\xi_*})=p(D^2_{\xi_*} W)$ where $D^2_{\xi_*} W$ is the notation for the Hessian of $W$ evaluated at $\xi_*$. 

A straightforward calculation gives 
\begin{align}
	\label{functW}
	W(\xi) =&\,H(u_\xi)-\xi_1 F_1(u_\xi)-\xi_2 F_2(u_\xi)-\sum_{j=1}^d \xi_{j+1}F_{j+2}(u_\xi)\nonumber\\
	=&\,\frac{1}{2}\int_{\R^d} \left(\vb\nabla \Phi_1(x)\vb^2+\vb\nabla \Phi_2(x)\vb^2\right)\diff x
	\nonumber\\
	&-\frac{1}{4}\int_{\R^d}\left( \alpha |\Phi_1(x)|^4+2\delta|\Phi_1(x)|^2|\Phi_2(x)|^2+\gamma |\Phi_2(x)|^4\right)\diff x\nonumber\\
	& -\frac{\omega_1}{2}\int_{\R^d}\vb \Phi_1(x)\vb^2\diff x-\frac{\omega_2}{2}\int_{\R^d}\vb \Phi_2(x)\vb^2\diff x.
\end{align}

Hence, using the fact that $\Phi_{\omega_1,\omega_2}$ is a solution to~\eqref{eqsolsys}, for each $k=1,\ldots,d+2$, we obtain
\begin{align*}
\frac{\partial W}{\partial \xi_k}(\xi)&=-\frac{\partial \omega_1}{\partial \xi_k}(\xi)\frac{1}{2}\int_{\R^d}\vb \Phi_1(x)\vb^2\diff x -\frac{\partial \omega_2}{\partial \xi_k}(\xi)\frac{1}{2}\int_{\R^d}\vb \Phi_2(x)\vb^2\diff x\\
&= -\frac{\partial \omega_1}{\partial \xi_k}(\xi)F_1(\Phi_{\omega_1,\omega_2}) -\frac{\partial \omega_2}{\partial \xi_k}(\xi)F_2(\Phi_{\omega_1,\omega_2}).
\end{align*}

Recalling $\omega_1(\xi)=\xi_1+\frac{|\hat \xi|^2}{4}$ and $\omega_2(\xi)=\xi_2+\frac{|\hat \xi|^2}{4}$, we have

\begin{equation*}
\left\{
\begin{aligned}
&\frac{\partial W}{\partial \xi_1}(\xi)=-F_1(\Phi_{\omega_1,\omega_2})\\
&\frac{\partial W}{\partial \xi_2}(\xi)=-F_2(\Phi_{\omega_1,\omega_2})\\ 
&\frac{\partial W}{\partial \xi_k}(\xi)=-\frac{\xi_k}{2}(F_1(\Phi_{\omega_1,\omega_2})+F_2(\Phi_{\omega_1,\omega_2}))&\text{for } k=3,\ldots,d+2\end{aligned}
\right..
\end{equation*}
Next, a straightforward computation gives, for all $\ell=1,\ldots,d+2$,
$$
\left\{
\begin{aligned}
&\frac{\partial^2 W}{\partial \xi_\ell\partial \xi_1}(\xi)=-\frac{\partial}{\partial \xi_\ell}F_1(\Phi_{\omega_1,\omega_2})\\
&\frac{\partial^2 W}{\partial \xi_\ell\partial \xi_2}(\xi)=-\frac{\partial}{\partial \xi_\ell}F_2(\Phi_{\omega_1,\omega_2})
\end{aligned}
\right.
$$
and
$$
\frac{\partial^2 W}{\partial \xi_l\partial\xi_k}(\xi)=-\frac{\xi_k}{2}\left(\frac{\partial}{\partial \xi_l}F_1(\Phi_{\omega_1,\omega_2})+\frac{\partial}{\partial \xi_l}F_2(\Phi_{\omega_1,\omega_2})\right)-\frac{\delta_{kl}}{2}(F_1(\Phi_{\omega_1,\omega_2})+F_2(\Phi_{\omega_1,\omega_2}))
$$
for $k=3,\ldots,d+2$.

Note that for all $k=3,\ldots,d+2$ and for all $\ell=1,\ldots,d+2$,
$$
\frac{\partial^2 W}{\partial \xi_{\ell}\partial \xi_k}(\xi)-\frac{\xi_k}{2}\left(\frac{\partial^2 W}{\partial \xi_{\ell}\partial \xi_1}(\xi)+\frac{\partial^2 W}{\partial \xi_{\ell}\partial \xi_2}(\xi)\right)=-\frac{\delta_{k\ell}}{2}\left(F_1(\Phi_{\omega_1,\omega_2})+F_2(\Phi_{\omega_1,\omega_2})\right).
$$
Hence, for all $\lambda \in \R$, 
\begin{align*}
\det(D^2_\xi W-\lambda\bbone)=&\,\left(-\frac{1}{2}\left(F_1(\Phi_{\omega_1,\omega_2})+F_2(\Phi_{\omega_1,\omega_2})\right)-\lambda\right)^d\det\left(\begin{pmatrix}\frac{\partial^2 W}{\partial \xi_{1}^2}&\frac{\partial^2 W}{\partial \xi_{1}\partial \xi_2}\\\frac{\partial^2 W}{\partial \xi_{2}\partial \xi_1}&\frac{\partial^2 W}{\partial \xi_{2}^2}\end{pmatrix}-\lambda\bbone\right)\\
\end{align*}
This means that $D^2_\xi W$ has at least $d$ negatives eigenvalues and $p(D^2_\xi W)\le 2$. More precisely, the two remaining eigenvalues are the eigenvalues of the matrix
$$
M=\begin{pmatrix}\frac{\partial^2 W}{\partial \xi_{1}^2}&\frac{\partial^2 W}{\partial \xi_{1}\partial \xi_2}\\\frac{\partial^2 W}{\partial \xi_{2}\partial \xi_1}&\frac{\partial^2 W}{\partial \xi_{2}^2}\end{pmatrix}=\begin{pmatrix}-\frac{\partial F_1}{\partial \omega_{1}}&-\frac{\partial F_1}{\partial \omega_{2}}\\-\frac{\partial F_2}{\partial \omega_{1}}&-\frac{\partial F_2}{\partial \omega_{2}}\end{pmatrix}
$$

On the one hand, if $\frac{\partial F_1}{\partial \omega_1}\frac{\partial F_2}{\partial \omega_2}-\frac{\partial F_1}{\partial \omega_2}\frac{\partial F_2}{\partial \omega_1}<0$, the matrix $M$ has exactly one positive eigenvalue and $p(D^2_\xi W)= 1$. On the other hand, if $\frac{\partial F_1}{\partial \omega_1}\frac{\partial F_2}{\partial \omega_2}-\frac{\partial F_1}{\partial \omega_2}\frac{\partial F_2}{\partial \omega_1}>0$ and $\frac{\partial F_1}{\partial \omega_1}+\frac{\partial F_2}{\partial \omega_2}<0$, the matrix $M$ has two positive eigenvalues and and $p(D^2_\xi W)= 2$.  

By applying Theorem~\ref{thm:localcoercivitygen}, we obtain to the following result.

\begin{theorem}
\begin{enumerate}
	\item Let $\delta>\max(\alpha,\gamma)$. If $$\frac{\partial F_1}{\partial \omega_1}(\Phi_{\omega_*,\omega_*})\frac{\partial F_2}{\partial \omega_2}(\Phi_{\omega_*,\omega_*})-\frac{\partial F_1}{\partial \omega_2}(\Phi_{\omega_*,\omega_*})\frac{\partial F_2}{\partial \omega_1}(\Phi_{\omega_*,\omega_*})<0$$ 
then $u_{\omega_*,\omega_*,c}$ is such that the local coercivity estimate~\eqref{eq:localcoercivitygen} is satisfied.
	 \item Let $\delta<\min(\alpha,\gamma)$. If $$\frac{\partial F_1}{\partial \omega_1}(\Phi_{\omega_*,\omega_*})\frac{\partial F_2}{\partial \omega_2}(\Phi_{\omega_*,\omega_*})-\frac{\partial F_1}{\partial \omega_2}(\Phi_{\omega_*,\omega_*})\frac{\partial F_2}{\partial \omega_1}(\Phi_{\omega_*,\omega_*})>0 \text{ and } \frac{\partial F_1}{\partial \omega_1}(\Phi_{\omega_*,\omega_*})+\frac{\partial F_2}{\partial \omega_2}(\Phi_{\omega_*,\omega_*})<0$$ then $u_{\omega_*,\omega_*,c}$ is is such that the local coercivity estimate~\eqref{eq:localcoercivitygen} is satisfied.
	 \end{enumerate}
\end{theorem}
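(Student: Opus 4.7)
The plan is to apply Theorem~\ref{thm:localcoercivitygen} with $\xi = \xi_*$, since the preceding computations have laid out essentially all the ingredients. Hypotheses~A and~B hold with $\Dcal = H^3(\R^d, \C^2)$ and the $L^2$-scalar product, exactly as in the soliton example of Section~\ref{ss:nlssoliton}; Hypothesis~C (the existence of a $C^1$ family $\xi \mapsto u_\xi$ in a neighbourhood of $\xi_*$ satisfying the stationary equation) has already been secured via Theorem~\ref{thm:persistenceb}. The G\aa rding-type bound~\eqref{eq:condGgen2} follows by isolating the term $\|\nabla v\|_{L^2}^2$ in $D^2_{u_{\xi_*}}\mathcal{L}_{\xi_*}(v,v)$ and bounding the remaining potential terms, whose coefficients $u_{\omega_*}^2$ and $\zeta_i\zeta_j u_{\omega_*}^2$ belong to $L^\infty(\R^d)$, by $\|v\|_{L^2}^2$. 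Hypothesis~(v) is immediate from the explicit formulas $\nabla F_1 = (u_1,0)$, $\nabla F_2 = (0, u_2)$ and $\nabla F_{2+j} = \frac{1}{i}(\partial_{x_j} u_1, \partial_{x_j} u_2)$ (with respect to the real $L^2$ inner product), all of which lie in $H^2(\R^d, \C^2) = \Dcal(\nabla^2\mathcal{L}_{\xi_*}(u_{\xi_*}))$.

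For the spectral hypothesis~(iii), I would combine the analysis of $\mathbbm{L}_+$ and $\mathbbm{L}_-$ already carried out. Weyl's theorem together with the exponential decay of $u_{\omega_*}$ gives $\sigma_{\mathrm{ess}}(L_\pm) = \sigma_{\mathrm{ess}}(L_\delta) = [-\omega_*, +\infty) \subset (0, +\infty)$. Below this threshold, $\mathbbm{L}$ has only finitely many eigenvalues: in case~(1) one strictly negative (from $L_+$) and a kernel of dimension $d+2$, in case~(2) two strictly negative (one from $L_+$, one from $L_\delta$) and the same kernel. Any isolated positive eigenvalues lying below $-\omega_*$ are, by definition, separated from $0$, and hence $\inf\bigl(\sigma(\nabla^2\mathcal{L}_{\xi_*}(u_{\xi_*})) \cap (0, +\infty)\bigr) > 0$. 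For hypothesis~(ii), the kernel enumeration yields $d+2$ independent elements; since $G$ is commutative, $G_{\mu_{\xi_*}} = G$ and $\dim \frak{g}_{\mu_{\xi_*}} = d+2$, so by Lemma~\ref{lem:prophessWhessL}~(5) a direct differentiation of $\Phi_{\gamma_1, \gamma_2, \tau}(u_{\xi_*})$ along each generator of $G$ identifies $T_{u_{\xi_*}}\Ocal_{u_{\xi_*}}$ with $\Ker D^2_{u_{\xi_*}}\mathcal{L}_{\xi_*}$ on dimensional grounds.

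Hypotheses~(i) and~(iv) are then settled together from the block form of $D^2_{\xi_*} W$ derived above, in which $d$ eigenvalues equal $-\tfrac{1}{2}(F_1 + F_2)(\Phi_{\omega_*, \omega_*}) < 0$ and the remaining two are the eigenvalues of the $2\times 2$ matrix $M$. In case~(1), the hypothesis $\det M < 0$ forces the two eigenvalues of $M$ to have opposite signs, so $D^2_{\xi_*} W$ is non-degenerate with $p(D^2_{\xi_*} W) = 1 = n(D^2_{u_{\xi_*}}\mathcal{L}_{\xi_*})$. In case~(2), the hypotheses $\det M > 0$ and $\mathrm{tr}\, M < 0$ together force both eigenvalues of $M$ to be strictly positive, so $D^2_{\xi_*} W$ is non-degenerate with $p(D^2_{\xi_*} W) = 2 = n(D^2_{u_{\xi_*}}\mathcal{L}_{\xi_*})$. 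All hypotheses of Theorem~\ref{thm:localcoercivitygen} are thus in place, and its conclusion is precisely the local coercivity estimate~\eqref{eq:localcoercivitygen} claimed.

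The main obstacle, already overcome in the preceding text, lies not in any of these verifications but in the spectral analysis of $L_\delta$ --- specifically, the proof that $\dim \Ker L_\delta = 0$ for $\delta$ outside the integrable line, and the min-max comparison with $L_+$ yielding $n(L_\delta) = 1$ in case~(2). With this structural input in hand, the remaining work reduces to bookkeeping of the index matching under the two sign regimes and an invocation of the general coercivity theorem.
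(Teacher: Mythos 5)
Your proposal reproduces the paper's own argument essentially step for step: the same verification of Hypotheses A, B, C (the latter via Theorem~\ref{thm:persistenceb}), the same block diagonalization of $\nabla^2\mathcal L_{\xi_*}(u_{\xi_*})$ into $L_+$, $L_\delta$ and $\mathbbm L_-$ with the resulting counts $n(D^2_{u_{\xi_*}}\mathcal L_{\xi_*})=1$ or $2$ and $\Ker D^2_{u_{\xi_*}}\mathcal L_{\xi_*}=T_{u_{\xi_*}}\Ocal_{u_{\xi_*}}$, and the same factorization of $\det(D^2_{\xi_*}W-\lambda\mathbb I)$ isolating the $2\times 2$ block $M$, so the approach is the paper's. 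One sign slip to fix: since $M=-\bigl[\partial F_i/\partial\omega_j\bigr]$, the hypothesis $\partial F_1/\partial\omega_1+\partial F_2/\partial\omega_2<0$ in case (2) gives $\mathrm{tr}\,M>0$, not $\mathrm{tr}\,M<0$; combined with $\det M>0$ this yields the two positive eigenvalues you correctly conclude with (as written, $\det M>0$ and $\mathrm{tr}\,M<0$ would give two negative eigenvalues).
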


A lengthly but straightforward computation gives 
\begin{align*}
	\frac{\partial F_1}{\partial \omega_1}&(\Phi_{\omega_*,\omega_*})\frac{\partial F_2}{\partial \omega_2}(\Phi_{\omega_*,\omega_*})-\frac{\partial F_1}{\partial \omega_2}(\Phi_{\omega_*,\omega_*})\frac{\partial F_2}{\partial \omega_1}(\Phi_{\omega_*,\omega_*})=\frac{\delta_1^2\delta_2^2}{2\omega_*}\left(1-\frac d2\right)\left(\int_{\R^d}u_{\omega_*}^2\right)\int_{\R^d}u_{\omega_*}L_{\delta}^{-1}u_{\omega_*}
\end{align*} 
and
\begin{align*}
\frac{\partial F_1}{\partial \omega_1}(\Phi_{\omega_*,\omega_*})+\frac{\partial F_2}{\partial \omega_2}(\Phi_{\omega_*,\omega_*})=\frac{\zeta_1^4+\zeta_2^4}{\zeta_1^2+\zeta_2^2}\frac{\left(1-\frac d2\right)}{{2\omega_*}}\int_{\R^d}u_{\omega_*}^2+\frac{2\zeta_1^2\zeta_2^2}{\zeta_1^2+\zeta_2^2}\int_{\R^d}u_{\omega_*}L_{\delta}^{-1}u_{\omega_*}.	
\end{align*}
Recalling that if $\delta>\max(\alpha,\gamma)$ then $L_\delta$ is a positive operator, together with Proposition~\ref{prop:hessianestimate} and the results of~\cite{debgenrot15}, we obtain the following corollary.

\begin{theorem} Let $d=1$.
\begin{enumerate}
	\item If $\delta>\max(\alpha,\gamma)$ then $u_{\omega_*,\omega_*,c}$ is an orbitally stable relative equilibrium.
	 \item If $\delta<\min(\alpha,\gamma)$ and $\int_{\R^d}u_{\omega_*}L_{\delta}^{-1}u_{\omega_*}<0$ then $u_{\omega_*,\omega_*,c}$ is an orbitally stable relative equilibrium.
\end{enumerate}
\end{theorem}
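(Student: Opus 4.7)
The plan is to verify, case by case, the sign conditions of the previous theorem in order to obtain the local coercivity estimate~\eqref{eq:localcoercivitygen}, then to package it into orbital stability via Proposition~\ref{prop:hessianestimate} and Theorems~10 and~11 of~\cite{debgenrot15}. Global well-posedness in $H^1(\R,\C^2)$ for $d=1$ is provided by~\cite{fanmon-07}, and since the nonlinearities are cubic, propagation of $H^3$-regularity and the remaining technical hypotheses of~\cite{debgenrot15} are available exactly as in the cubic focusing NLS treated in Section~\ref{ss:nlssoliton}.

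The core of the argument is a sign analysis. From $\alpha\zeta_1^2+\delta\zeta_2^2=\delta\zeta_1^2+\gamma\zeta_2^2=1$ one derives immediately that $\delta(\zeta_1^2+\zeta_2^2)>1$ when $\delta>\max(\alpha,\gamma)$, and $\delta(\zeta_1^2+\zeta_2^2)<1$ when $\delta<\min(\alpha,\gamma)$. Recalling $L_\delta=L_--2(1-\delta(\zeta_1^2+\zeta_2^2))u_{\omega_*}^2$ with $L_-\ge 0$ and $\Ker L_-=\mathrm{span}\{u_{\omega_*}\}$, case~(1) exhibits $L_\delta$ as the sum of $L_-$ and a strictly positive multiplication operator; hence $L_\delta$ is strictly positive and invertible, and $\int_\R u_{\omega_*}L_\delta^{-1}u_{\omega_*}>0$. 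Plugging this sign into the determinant identity displayed just before the statement, together with $\omega_*<0$ and $1-d/2=1/2>0$ for $d=1$, I obtain $\partial_{\omega_1}F_1\,\partial_{\omega_2}F_2-\partial_{\omega_2}F_1\,\partial_{\omega_1}F_2<0$; this triggers the first alternative of the previous theorem, giving $p(D^2_{\xi_*}W)=1=n(D^2_{u_{\xi_*}}\Lcal_{\xi_*})$, hence~\eqref{eq:localcoercivitygen} via Theorem~\ref{thm:localcoercivitygen}. In case~(2), $\omega_*<0$ and the hypothesis $\int_\R u_{\omega_*}L_\delta^{-1}u_{\omega_*}<0$ render that same scalar strictly positive, whereas both summands in the trace identity for $\partial_{\omega_1}F_1+\partial_{\omega_2}F_2$ are strictly negative; this triggers the second alternative and produces $p(D^2_{\xi_*}W)=2=n(D^2_{u_{\xi_*}}\Lcal_{\xi_*})$, so~\eqref{eq:localcoercivitygen} holds once more.

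Once the coercivity is secured, Proposition~\ref{prop:hessianestimate} converts it into the local minimum property~\eqref{eq:coercivityglobal} on a neighbourhood of the $G_{\mu_{\xi_*}}$-orbit of $u_{\omega_*,\omega_*,c}$, and orbital stability then follows from Theorems~10 and~11 of~\cite{debgenrot15}. The main obstacle is not analytic but organizational: one must verify the full set of technical hypotheses --- $C^2$ regularity of $g\mapsto\Phi_g(u_{\xi_*})$, propagation of $H^3$-regularity under the flow, the existence of $\nabla F_j(u_{\xi_*})\in\DomH$, and the form-core conditions of Lemma~\ref{lem:existenceHessian} --- in the setting of the Manakov system. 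Since the $G$-action is a smooth representation of $\R\times\R\times\R$ by $L^2$-isometries and the nonlinearities are smooth cubic functions, these verifications are strictly parallel to those performed for the cubic NLS in Section~\ref{ss:nlssoliton} and require no new ideas.
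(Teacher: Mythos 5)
Your proposal is correct and follows essentially the same route as the paper: the same sign analysis of the displayed determinant and trace identities (using $\omega_*<0$, $1-d/2>0$, and the positivity of $L_\delta$ when $\delta>\max(\alpha,\gamma)$) to match $p(D^2_{\xi_*}W)$ with $n(D^2_{u_{\xi_*}}\Lcal_{\xi_*})$ in each case, followed by Theorem~\ref{thm:localcoercivitygen}, Proposition~\ref{prop:hessianestimate} and the results of~\cite{debgenrot15}. The only (welcome) additions are the explicit derivation of $\delta(\zeta_1^2+\zeta_2^2)\gtrless 1$ from~\eqref{conditiondelta} and the observation that strict positivity of $L_\delta$ forces $\int_\R u_{\omega_*}L_\delta^{-1}u_{\omega_*}>0$, both of which the paper uses implicitly.
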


Note that a proof of orbital of $u_{\omega_*,\omega_*,0}$ in dimension $d=1$ and for $\alpha=\gamma=1$ was given in~\cite{otha-96} using concentration-compactness arguments.

\subsection{Stability of plane waves for a system of  coupled nonlinear Schrödinger equations}\label{ss:cnlsplanewaves}
We consider a system of two coupled nonlinear Schrödinger equations on a one-dimensional torus $\T=\T^1$ is the one-dimensional torus of length $L>0$. The system is given by
\begin{equation}
	\label{eq:nlsmanakovbis}
	\left\{
	\begin{aligned}
		&i\partial_t u_1(t,x) +\beta\Delta u_1(t,x) +(\alpha|u_1(t,x)|^2+\delta |u_2(t,x)|^2) u_1(t,x)=0\\
		&i\partial_t u_2(t,x) +\beta\Delta u_2(t,x) +(\delta |u_1(t,x)|^2+\gamma |u_2(t,x)|^2) u_2(t,x)=0\\
		&u(0,x)=u(x)
	\end{aligned}
	\right.
\end{equation}
with $u(t,x)=\begin{pmatrix}u_1(t,x)\\u_2(t,x)\end{pmatrix}:\R\times\T\to \C^2$. The constants $\alpha,\gamma,\delta\in \R$ and $\beta\in \R_{+}^*$ are parameters of the model. 

As already mentioned, this system is of relevance in nonlinear optics. Linear instability is in this context referred to as \emph{modulational instability} and was studied for various parameter ranges in~\cite{agrawal, trilloandcy, FMMW}, among others. We will use the methods exposed in this paper to show that, in the parameter regimes where linear stability can be established, orbital stability also holds.

 
The four-parameter family of plane waves
\begin{equation}
 	\label{eq:manplanewaves1}
 	\tilde u_{\en}(t,x)=\begin{pmatrix}\zeta_1 e^{ik_1\cdot x}e^{-i\en_1 t}\\ \zeta_2 e^{ik_2\cdot x}e^{-i\en_2 t}\end{pmatrix}
\end{equation} 
with $\en=(\en_1,\en_2)\in \R^2$, $(k_1,k_2)\in \frac{2\pi}{L}\Z^2$ and $\alpha=(\alpha_1,\alpha_2) \in \R^2$ such that
\begin{equation}
	\label{eq:disprelmanbis}
	\left\{
	\begin{aligned}
		\en_1 &=\beta k_1^2-( \alpha \zeta_1^2+\delta \zeta_2^2)\\
		\en_2 &=\beta k_2^2-( \delta \zeta_1^2+\gamma \zeta_2^2).
	\end{aligned}
	\right.
\end{equation}
are solution to the equation \eqref{eq:nlsmanakovbis} and we are interested in study their orbital stability. Using Galilean invariance of the equation (see~\cite{debgenrot15}), the stability of these plane waves is seen to be equivalent to that of 
\begin{equation}
 	\label{eq:manplanewaves2}
 	\tilde u_{\en}(t,x)=\begin{pmatrix}\zeta_1 e^{ik x}e^{-i\en_1 t}\\ \zeta_2 e^{-ik x}e^{-i\en_2 t}\end{pmatrix}
\end{equation} 
with $k=k_1-k_2$. Furthermore, we can easily remark that $\tilde u_{\en}(t,x)$ can be written in the form 
\begin{equation*}
 	\tilde u_{\en}(t,x)=\begin{pmatrix}e^{ik x}&0\\ 0&e^{-ik x}\end{pmatrix}u_{\en}(t,x)
\end{equation*} 
with 
\begin{equation}
 	\label{eq:manplanewaves3}
 	u_{\en}(t,x)=\begin{pmatrix}\zeta_1 e^{-i\en_1 t}\\ \zeta_2 e^{-i\en_2 t}\end{pmatrix}
\end{equation} 
a solution to the system of coupled nonlinear Schrödinger equations
\begin{equation}
	\label{eq:nlsmanakov}
	\left\{
	\begin{aligned}
		&i\partial_t u_1 +\beta\Delta u_1+2\beta ik\nabla u_1 +(\alpha|u_1|^2+\delta |u_2|^2) u_1-\beta k^2u_1=0\\
		&i\partial_t u_2 +\beta\Delta u_2-2\beta ik\nabla u_2 +(\delta |u_1|^2+\gamma |u_2|^2) u_2-\beta k^2u_2=0\\
		&u(0,x)=u(x)
	\end{aligned}
	\right.
\end{equation}

It is easy to show that the Cauchy problem \eqref{eq:nlsmanakov} is globally well-posed in $H^{1}(\T,\C^2)$ 
(since we consider here only the dimension $d=1$).

Equation \eqref{eq:nlsmanakov} is the Hamiltonian differential equation associated to the function $H$ defined by 
\begin{align}
	\label{eq:energyman}
	H(u)=&\,\frac{\beta}{2}\int_0^L \left(\vb(\nabla+ik) u_1(x)\vb^2+\vb(\nabla-ik) u_2(x)\vb^2\right)\diff x\nonumber\\
	&\,-\frac{1}{4}\int_0^L\left( \alpha\vb u_1(x)\vb^4+2\delta|u_1(x)|^2|u_2(x)|^2+\gamma \vb u_2(x)\vb^4\right)\diff x.
\end{align}

Let $G=\R\times \R$ and define its action on $E=H^{1}(\T,\C^2)$ via
		\begin{equation}\label{eq:maninvariance}
		\forall u=\begin{pmatrix}u_1\\u_2\end{pmatrix}\in H^1(\T,\C^2),\quad \left(\Phi_{\gamma_1,\gamma_2}(u)\right)(x)=\begin{pmatrix} e^{-i\gamma_1} u_1(x)\\  e^{-i\gamma_2} u_2(x) \end{pmatrix}.
	\end{equation}
The group $G$ is an invariance group for the dynamics and the quantities 
\begin{align*}
	F_1(u)=\frac{1}{2}\int_{0}^L\vb u_1(x)\vb^2\diff x,\quad
	F_2(u)=\frac{1}{2}\int_{0}^L\vb u_2(x)\vb^2\diff x
\end{align*}
are the corresponding constants of the motion.

The two-parameter family of plane waves
\begin{equation}
 	\label{eq:manplanewaves}
 	u_{\en}(x)=\begin{pmatrix}\zeta_1 \\ \zeta_2 \end{pmatrix}
\end{equation}
with $\en=(\en_1,\en_2)\in \R^2$ and $\zeta_1,\zeta_2 \in \R\smallsetminus\{0\}$ such that
\begin{equation}
	\label{eq:disprelman}
	\left\{
	\begin{aligned}
		\en_1 &=\beta k^2-( \alpha \zeta_1^2+\delta \zeta_2^2)\\
		\en_2 &=\beta k^2-( \delta \zeta_1^2+\gamma \zeta_2^2).
	\end{aligned}
	\right.
\end{equation}
 are solutions to the stationary equation \eqref{eq:defrelequilibria}. As a consequence, $u_{\en}$ is $G_{\mu_\xi}$-relative equilibria of \eqref{eq:nlsmanakov} and our goal is to investigate the orbital stability of these plane wave solutions by applying Theorem~\ref{thm:localcoercivitygen}. 
 
From now, assume that $\alpha\gamma\neq \delta^2.$ This is the necessary and sufficient condition for the map 
$$
\mu=(\frac L2|\zeta_1|^2, \frac L2|\zeta_2|^2)\to (\xi_1,\xi_2)
$$
defined in~\eqref{eq:disprelman} to be invertible. Its inverse is $\hat F$, which is a diffeomorphism.
Note that this condition corresponds to the case in which the system is not completely integrable.

As before, hypotheses A (by taking the $L^2$- scalar product on $E=H^1(\T,\C^2)$), B and C are clearly satisfied.
By using the dispersion relation~\eqref{eq:disprelmanbis}, we have 
\begin{align}\label{eq:defWMan}
W(\en)=&\,H(u_\en)-\en_1F_1(u_\en)-\en_2F_2(u_\en)\nonumber\\
=&\, \frac{L}{4(\alpha\gamma-\delta^2)}\left[\gamma(\en_1-\beta k^2)^2-2\delta(\en_1-\beta k^2)(\en_2-\beta k^2)+\alpha(\en_2-\beta k^2)^2\right]
\end{align}
As a consequence, 
\begin{align}\label{eq:DWMan}
D_\en^2W=\frac{L}{2(\alpha\gamma-\delta^2)}\begin{pmatrix} \gamma &-\delta\\ -\delta &\alpha
\end{pmatrix}
\end{align}

It is clear that $D_\en^2W$ is non-degenerate. Moreover a straightforward calculation shows that
\begin{enumerate}
\item If $\alpha\gamma-\delta^2>0$ and $\min(\alpha,\gamma)>0$, then $p(D^2_\en W)=2$;
\item If $\alpha\gamma-\delta^2>0$ and $\max(\alpha,\gamma)<0$, then $p(D^2_\en W)=0$;
\item If $\alpha\gamma-\delta^2<0$, then $p(D^2_\en W)=1$.
\end{enumerate}

Next, we have to compute $D^2_{u_\xi}\mathcal L_\xi(v,v)$ with $\mathcal L_\xi(u)=H(u)-\xi_1 F_1(u)-\xi_2 F_2 (u)$. A straightforward calculation gives $D^2_{u_\xi}\mathcal L_\xi (v,v)=\langle \nabla^2\mathcal L_\xi ({u_\xi}) v,v\rangle$
with 
\begin{equation}\label{eq:Hessiantorus}
\nabla^2\mathcal L_\xi ({u_\xi})=
\begin{pmatrix}
-\beta \Delta -2\alpha\zeta_1^2& -2\delta\zeta_1\zeta_2&2\beta k\nabla  &0\\
-2\delta\zeta_1\zeta_2 & -\beta \Delta -2\gamma\zeta_2^2 &0 & -2\beta k\nabla\\
-2\beta k\nabla& 0 &-\beta \Delta & 0\\
0 &2\beta k\nabla &0 &-\beta\Delta
\end{pmatrix}.
\end{equation}
In particular, in this functional space setting the hypotheses of Lemma~\ref{lem:existenceHessian} are clearly satisfied by $D^2_{u_\xi}\mathcal L_\xi (\cdot,\cdot)$. 

Using Fourier series, we can show that the eigenvalues of $\nabla^2\mathcal L_\xi ({u_\xi})$, for all $n\in\N$, are of the form
\begin{align}
\label{eq:evD2Lkplus}
\lambda^+_{\pm,n}=\beta\left(\frac{2\pi}{L}n\right)^2+\frac{1}{2}\left(C_+\pm\sqrt{C_+^2+16\beta^2k^2\left(\frac{2\pi}{L}n\right)^2}\right)\\
\label{eq:evD2Lkminus}
\lambda^-_{\pm,n}=\beta\left(\frac{2\pi}{L}n\right)^2+\frac{1}{2}\left(C_-\pm\sqrt{C_-^2+16\beta^2k^2\left(\frac{2\pi}{L}n\right)^2}\right)
\end{align}
with 
\begin{align*}
C_{\pm}=-(\alpha\zeta_1^2+\gamma\zeta_2^2)\pm\sqrt{(\alpha\zeta_1^2+\gamma\zeta_2^2)^2-4\zeta_1^2\zeta_2^2(\alpha\gamma-\delta^2)},
\end{align*}
By analyzing the sign of the eigenvalues for $n=0$, we obtain the following situation 
\begin{enumerate}
\item  If $\alpha\gamma-\delta^2>0$ and $\min(\alpha,\gamma)>0$, then 
$\lambda^+_{-,0}$ and $\lambda^-_{-,0}$ are both negative and 
$n(D^2_{u_\xi}\mathcal L_\xi)\ge2$. 
\item If $\alpha\gamma-\delta^2>0$ and $\min(\alpha,\gamma)<0$, then $\lambda^+_{+,0}$ and $\lambda^-_{+,0}$ are both positive. This implies $\lambda^+_{+,n}>0$ and $\lambda^-_{+,n}>0$ for all $n\in \N$.
\item  If $\alpha\gamma-\delta^2<0$, then $\lambda^-_{-,0}<0$, $\lambda^+_{+,0}>0$ and $n(D^2_{u_\xi}\mathcal L_\xi)\ge1$. As a consequence $\lambda^+_{+,n}>0$ for all $n\in \N$.
\end{enumerate}
In all the cases, the two remaining eigenvalues are both $0$ with purely imaginary eigenvectors which implies that
$$
T_{u_\xi}\mathcal O_{u_\xi}=\mathrm{span}\left\{\begin{pmatrix}i\\0\end{pmatrix}, \begin{pmatrix}0\\i\end{pmatrix}\right\}\subset \Ker(D^2_{u_\xi}\mathcal L_\xi).
$$
and $\lambda_{+,n}^{\pm}>0$ for all $n\in \N^*$.

Next, a straightforward calculation shows that if we assume 
\begin{align}
\label{eq:condstabmank1}
C_{\pm}^2+16\beta^2k^2\left(\frac{2\pi}{L}\right)^2>16\beta^2 k^4
\end{align}
then $\lambda^{\pm}_{-,n}$ is increasing as a function of $n$ for all $n\in \N^*$. Hence, it enough to suppose that $\lambda^{\pm}_{-,1}>0$, that is
\begin{align}
\label{eq:condstabmank2}
\beta\left(\frac{2\pi}{L}\right)^2+C_{\pm}>4\beta k^2,
\end{align}
to conclude that $p(D^2_\en W)=n(D^2_{u_\xi}\mathcal L_\xi)$.
Note that condition~\eqref{eq:condstabmank2} implies condition~\eqref{eq:condstabmank1}. Moreover, since $C_+\ge C_-$, it is enough to assume that 
$$
\beta\left(\frac{2\pi}{L}\right)^2+C_{-}>4\beta k^2.
$$
Moreover, it is then clear that 
$$
\inf(\sigma(\nabla^2\mathcal L_\xi ({u_\xi}))\cap (0,+\infty))>0.
$$
and
$$
\Ker(D^2_{u_\xi}\mathcal L_\xi)=\Ker(\nabla^2\mathcal L_\xi ({u_\xi}))=\mathrm{span}\left\{\begin{pmatrix}i\\0\end{pmatrix}, \begin{pmatrix}0\\i\end{pmatrix}\right\}=T_{u_\xi}\mathcal O_{u_\xi}.
$$

Hence Theorem~\ref{thm:localcoercivitygen} applies and, together with Proposition~\ref{prop:hessianestimate}  and the results of~\cite{debgenrot15}, leads to the following result.

\begin{proposition} Let $k\in \frac{2\pi}{L}\Z$, $\zeta_1,\zeta_2\in \R^*$ and  $\gamma_1,\gamma_2\in\R, \gamma_{12}\in \R^*$ such that $\alpha\gamma\neq \delta^2$. If
\begin{align}
\label{eq:condstabmanpropk}
\beta\left(\frac{2\pi}{L}\right)^2+C_{-}>4\beta k^2
\end{align} 
then $u_\xi=\begin{pmatrix}\zeta_1\\ \zeta_2\end{pmatrix}$ is an orbitally stable $G_{\mu_\xi}$- relative equilibrium.
\end{proposition}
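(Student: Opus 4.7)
The plan is to verify the hypotheses of Theorem~\ref{thm:localcoercivitygen} at the plane wave $u_\xi=(\zeta_1,\zeta_2)^T$, deduce the local coercivity estimate~\eqref{eq:localcoercivitygen}, and then combine Proposition~\ref{prop:hessianestimate} with Theorems~10 and~11 of~\cite{debgenrot15} to conclude $G_{\mu_\xi}$-orbital stability. Hypothesis~A holds with $\Dcal=H^3(\T,\C^2)$ equipped with the natural real $L^2$ inner product (so $\hat E=L^2(\T,\C^2)$); Hypothesis~B is immediate from~\eqref{eq:maninvariance} since $\Phi_{\gamma_1,\gamma_2}$ is isometric both in $E$ and in $\hat E$; and Hypothesis~C is trivial for the explicit two-parameter family~\eqref{eq:manplanewaves}, with $\Omega=\R^2$. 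Condition~(v) of Theorem~\ref{thm:localcoercivitygen} is clear because $\nabla F_j(u)$ is the obvious $L^2$ gradient (a smooth, in fact polynomial, function of $u$ lying in $\mathcal D(\nabla^2\Lcal_\xi(u_\xi))=H^2(\T,\C^2)$), and the lower bound~\eqref{eq:condGgen2} follows from the boundedness of the zeroth order terms in~\eqref{eq:Hessiantorus} against the Laplacian.

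The first substantive step is the non-degeneracy~(i). From~\eqref{eq:DWMan} we read off
\[
\det D^2_\xi W = \frac{L^2}{4(\alpha\gamma-\delta^2)},
\]
which is nonzero exactly under the hypothesis $\alpha\gamma\ne\delta^2$. A direct sign analysis of the symmetric $2\times2$ matrix in~\eqref{eq:DWMan} then yields the three cases already listed in the discussion of $p(D^2_\xi W)$: namely $p=2$, $p=0$ or $p=1$ according to the signs of $\alpha\gamma-\delta^2$ and $\min(\alpha,\gamma)$.

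Next I would use the Fourier decomposition of $\nabla^2\Lcal_\xi(u_\xi)$. Writing $v=\sum_{n\in\Z}\hat v_n e^{i(2\pi n/L)x}$ and performing the rotation by $P$ from~\eqref{orthotrans} (applied component-wise) splits the operator into $2\times2$ matrix blocks acting on each Fourier mode, whose eigenvalues are exactly $\lambda^{\pm}_{\pm,n}$ as given in~\eqref{eq:evD2Lkplus}--\eqref{eq:evD2Lkminus}. The key point is then to show that condition~\eqref{eq:condstabmanpropk} is exactly what is needed to ensure that every nonzero mode contributes only positive eigenvalues. A short calculation shows that if $\beta(2\pi/L)^2+C_-> 4\beta k^2$ then a fortiori $C_\pm^2+16\beta^2k^2(2\pi/L)^2>16\beta^2k^4$, so that $n\mapsto \lambda^\pm_{-,n}$ is strictly increasing on $\N^*$; combined with $\lambda^\pm_{-,1}>0$ (which is another reformulation of~\eqref{eq:condstabmanpropk}), this gives
\[
\inf_{n\ge1}\min\bigl(\lambda^+_{-,n},\lambda^-_{-,n},\lambda^+_{+,n},\lambda^-_{+,n}\bigr)>0.
\]
At $n=0$ the two zero eigenvalues are produced by the purely imaginary directions $(i,0)^T$ and $(0,i)^T$, which span $T_{u_\xi}\Ocal_{u_\xi}$, and the remaining two eigenvalues $\lambda^\pm_{-,0}$ have signs controlled precisely by the same case distinction used for $p(D^2_\xi W)$. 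This simultaneously delivers hypotheses~(ii), (iii) and the identity $p(D^2_\xi W)=n(D^2_{u_\xi}\Lcal_\xi)$ required by~(iv).

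With all hypotheses of Theorem~\ref{thm:localcoercivitygen} verified, we obtain~\eqref{eq:localcoercivitygen}. Proposition~\ref{prop:hessianestimate} then yields the quadratic lower bound~\eqref{eq:coercivityglobal}, and the standard argument recalled in Section~8 of~\cite{debgenrot15} (Theorems~10 and~11 there) converts this into $G_{\mu_\xi}$-orbital stability of $u_\xi$. The main obstacle in the proof is really the bookkeeping of the Fourier-mode spectral analysis: one must check that condition~\eqref{eq:condstabmanpropk} correctly captures the intersection of ``no unstable nonzero modes'' with the matching requirement $p(D^2_\xi W)=n(D^2_{u_\xi}\Lcal_\xi)$ in each of the three parameter regimes, and that the two zero modes at $n=0$ are exactly the tangent directions to the orbit.
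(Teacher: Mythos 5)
Your proposal follows essentially the same route as the paper's proof: verify the hypotheses of Theorem~\ref{thm:localcoercivitygen} using the explicit Hessian~\eqref{eq:DWMan} of $W$ and the Fourier-mode eigenvalues~\eqref{eq:evD2Lkplus}--\eqref{eq:evD2Lkminus}, use the monotonicity in $n$ granted by~\eqref{eq:condstabmank1} to reduce positivity of the nonzero modes to~\eqref{eq:condstabmanpropk}, match the $n=0$ negative-eigenvalue count with $p(D^2_\xi W)$ case by case while identifying the two zero modes with $T_{u_\xi}\Ocal_{u_\xi}$, and conclude via Proposition~\ref{prop:hessianestimate} and the results of~\cite{debgenrot15}. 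The only slip is cosmetic: the rotation $P$ of~\eqref{orthotrans} belongs to the soliton subsection and does not in general reduce the mode-$n$ matrix of~\eqref{eq:Hessiantorus} to $2\times2$ blocks (one works directly with the $4\times 4$ blocks), but since you invoke the correct eigenvalues this does not affect the argument.
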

We note that in the present example, one does not strictly need to use the Vakhitov-Kolokolov condition since a direct study of the Hessian of $D^2_{u_\xi}\Lcal_\xi$ restricted to $T_{u_\xi}\Sigma_{\mu_\xi}\cap (T_{u_\xi}\mathcal O_{u_\xi})^\perp$ could also be performed starting from~\eqref{eq:Hessiantorus}.  

\subsubsection{Case $k=0$} 
If $k=0$, condition~\eqref{eq:condstabmanpropk} reads as 
\begin{align}
\label{eq:condstabmanprop}
\beta\left(\frac{2\pi}{L}\right)^2-(\alpha\zeta_1^2+\gamma\zeta_2^2)-\sqrt{(\alpha\zeta_1^2+\gamma\zeta_2^2)^2-4\zeta_1^2\zeta_2^2(\alpha\gamma-\delta^2)}>0
\end{align} 
We remark that condition~\eqref{eq:condstabmanprop} is a necessary and sufficient condition for linear stability. To see this, note that the linearization of~\eqref{eq:nlsmanakov} around $u_\xi=\begin{pmatrix}\alpha_1\\ \alpha_2\end{pmatrix}$ is given by the system 
\begin{equation}
	\partial_t \begin{pmatrix} \mathrm{Re}(v_1)\\
	\mathrm{Re}(v_2)\\
	\mathrm{Im}(v_1)\\
	\mathrm{Im}(v_2)
	\end{pmatrix}
	=\mathbb{L}  \begin{pmatrix} \mathrm{Re}(v_1)\\
	\mathrm{Re}(v_2)\\
	\mathrm{Im}(v_1)\\
	\mathrm{Im}(v_2)
	\end{pmatrix},\quad 
	\label{eq:nlsmanakovlinmatrix}
	\mathbb{L}=\begin{pmatrix}
	-2\beta k\nabla& 0 &-\beta \Delta & 0\\
0 &2\beta k\nabla &0 &-\beta\Delta\\
	\beta \Delta +2\alpha\zeta_1^2& 2 \delta\zeta_1\zeta_2&-2\beta k\nabla  &0\\
2 \delta\zeta_1\zeta_2 & \beta \Delta +2\gamma\zeta_2^2 &0 & 2\beta k\nabla
	\end{pmatrix}.
\end{equation}
A solution to~\eqref{eq:nlsmanakov} is said to be linearly stable if all the eigenvalues of $\mathbb{L}$ are purely imaginary. By using Fourier series, the eigenvalues of~\eqref{eq:nlsmanakovlinmatrix} can be seen to be the zeros of the characteristic polynomial 
\begin{align*}
P_n(\lambda)=&\, \lambda^4-2\lambda^2 \beta n_L^2(-\beta n_L^2+(\alpha\zeta^2_1+\gamma\zeta_2^2)-4\beta k^2)+i \lambda 8 \beta n_L^2 \beta k n_L(\alpha\zeta^2_1-\gamma\zeta_2^2)\\
&+(\beta n_L^2)^3  (\beta n_L^2-2 (\alpha\zeta^2_1+\gamma\zeta_2^2)) + 4 (\beta n_L^2)^2 \zeta_1^2\zeta_2^2(\alpha\gamma-\delta^2)\\
&+8  \beta n_L^2 \beta^2 k^2 n_L^2 (-\beta n_L^2+(\alpha\zeta^2_1+\gamma\zeta_2^2)+2\beta k^2)
\end{align*}
with $n_L=\left(\frac{2\pi}{L}n\right)$. So, whenever $k=0$, $P_n(\lambda)$ reduces to
\begin{align*}
P_n(\lambda)=&\, \lambda^4-2\lambda^2 \beta n_L^2(-\beta n_L^2+(\alpha\zeta^2_1+\gamma\zeta_2^2))\\
&+(\beta n_L^2)^3  (\beta n_L^2-2 (\alpha\zeta^2_1+\gamma\zeta_2^2)) + 4 (\beta n_L^2)^2 \zeta_1^2\zeta_2^2(\alpha\gamma-\delta^2).
\end{align*} and, for all $n\in \N$ the eigenvalues of $\mathbb{L}$ are 
\begin{align*}
\tilde\lambda^2_{\pm,n}=&\,\beta n_L^2\left(-\beta n_L^2+(\alpha\zeta_1^2+\gamma\zeta_2^2)\pm\sqrt{(\alpha\zeta_1^2-\gamma\zeta_2^2)^2+4\zeta_1^2\zeta_2^2\delta^2}\right).
\end{align*}
Now it is clear that  $\tilde\lambda^2_{+,1}<0$ if and only if condition~\eqref{eq:condstabmanprop} holds. In that case, for all $n\in\N^*$, $\tilde\lambda^2_{+,n}\le \tilde\lambda^2_{+,1}$.  Moreover, $\tilde\lambda^2_{-,n}\le \tilde\lambda^2_{+,n}$ for all $n\in\N^*$. As a consequence, 	all the eigenvalues of $\mathbb{L}$ are purely imaginary and the corresponding plane wave is linearly stable if and only if~\eqref{eq:condstabmanprop} holds. 
 
\subsubsection{Case $k\neq 0$} 
If $k\neq0$,
in the particular case $\alpha\zeta_1^2=\gamma\zeta_2^2$ which is a generalization of the set of parameters treated in \cite{trilloandcy}, we can show that condition~\eqref{eq:condstabmanpropk} is a necessary and sufficient condition for linear stability. 

Indeed, in this case, $P_n(\lambda)$  the characteristic polynomial of~\eqref{eq:nlsmanakovlinmatrix} reduces to
\begin{align*}
P_n(\lambda)=&\, \lambda^4-2\lambda^2 \beta n_L^2(-\beta n_L^2+(\alpha\zeta^2_1+\gamma\zeta_2^2)-4\beta k^2)+i \lambda 8 \beta n_L^2 \beta k n_L(\alpha\zeta^2_1-\gamma\zeta_2^2)\\
&+(\beta n_L^2)^3  (\beta n_L^2-2 (\alpha\zeta^2_1+\gamma\zeta_2^2)) + 4 (\beta n_L^2)^2 \zeta_1^2\zeta_2^2(\alpha\gamma-\delta^2)\\
&+8  (\beta n_L^2)^2 \beta k^2 (-\beta n_L^2+(\alpha\zeta^2_1+\gamma\zeta_2^2)+2\beta k^2)
\end{align*} and, for all $n\in \N$, the eigenvalues are given by 
\begin{align*}
\tilde\lambda^2_{\pm,n}=&\,\beta n_L^2\Big[-\beta n_L^2+(\alpha\zeta_1^2+\gamma\zeta_2^2)-4\beta k^2\pm\sqrt{4\zeta_1^2\zeta_2^2\delta^2+16\beta k^2(\beta n_L^2-(\alpha\zeta_1^2+\gamma\zeta_2^2))}\Big].
\end{align*}

Now, a tedious but straightforward calculation shows that $\tilde\lambda^2_{+,n}<0$ for all $n\in \N^*$ if and only if condition~\eqref{eq:condstabmanpropk} holds. Moreover, $\tilde\lambda^2_{-,n}\le \tilde\lambda^2_{+,n}$ for all $n\in\N^*$. As a consequence, 	all the eigenvalues of $\mathbb{L}$ are purely imaginary and the corresponding plane wave is linearly stable if and only if~\eqref{eq:condstabmanpropk} holds.

\subsubsection{Physical interpretation} To sum up, we can conclude that, given $k=k_1-k_2\in \frac{2\pi}{L}\Z$, $\zeta_1,\zeta_2\in \R^*$ and  $\alpha,\gamma, \delta\in \R^*$ such that $\alpha\gamma\neq \delta^2$, if 
\begin{align}
\label{eq:condconcl}
\beta\left(\frac{2\pi}{L}\right)^2-(\alpha\zeta_1^2+\gamma\zeta_2^2)-\sqrt{(\alpha\zeta_1^2+\gamma\zeta_2^2)^2-4\zeta_1^2\zeta_2^2(\alpha\gamma-\delta^2)}>4\beta k^2
\end{align} 
then the plane waves given by~\eqref{eq:manplanewaves1}
are orbitally stable $G_{\mu_\xi}$- relative equilibria. Moreover, for $k=0$, if this condition is not satisfied the plane wave is unstable (at least linearly). For $k\neq 0$ this remains true whenever $\alpha\zeta_1^2=\gamma\zeta^2$. 

We know from \cite{debgenrot15} that plane waves solutions to a cubic defocusing nonlinear Schr\"odinger equation on the one-dimensional torus are orbitally stable. This means that whenever, $\delta=0$, $\alpha<0$ and $\gamma<0$, all the plane waves of the form 
\begin{equation*}
 	u_{\en}(t,x)=\begin{pmatrix}\zeta_1 e^{ik_1\cdot x}e^{-i\en_1 t}\\ \zeta_2 e^{ik_2\cdot x}e^{-i\en_2 t}\end{pmatrix}
\end{equation*}
are orbitally stable. It is natural to ask what happens if $|\delta|\neq 0$. We have two different situations: $k=0$ (\textit{i.e.} the plane waves have the same wave number $k_1=k_2$) and $k\neq 0$.
If $k=0$ and $\delta^2<\alpha\gamma$, which means that the coupling is weak, then $C_->0$ and condition~\eqref{eq:condconcl} remains true. This means that the plane waves with $k=0$ then remain stable.
If $k=0$ and $\delta^2>\alpha\gamma$, which means that the coupling is strong, then $C_-<0$ and condition~\eqref{eq:condconcl} fails at least if $L$ is large enough. Then the plane waves considered become unstable.

In the case $k\not=0$, note that condition~\eqref{eq:condconcl} can be satisfied only in the case~(2) above, namely when $\alpha\gamma-\delta^2>0$, and $\max(\alpha, \gamma)<0$, since otherwise $C_-<0$. This corresponds to a relatively small perturbation of two uncoupled defocusing Schr\"odinger equations with orbitally stable plane wave solutions. Condition~\eqref{eq:condconcl} can then be satisfied for a finite number of values of $k$, provided $C_-$ is large enough, but it fails for larger ones. The size of $C_-$ depends in particular on the ``power'' of the plane wave, determined by $|\zeta_1|$ and $|\zeta_2|$. For larger values of $k$, the plane wave becomes linearly unstable, on the other hand, even at weak coupling. In other words, high $k$ plane waves show modulational instability, even at arbitrarily low $\delta$. 

\section{On the link with Grillakis-Shatah-Strauss}\label{s:comparegss}

We will now compare the results in this paper to~\cite{gssII}. As we have already pointed out, in~\cite{gssII} a proof of orbital stability is proposed with respect to an a priori different subgroup of $G$ and under similar but nevertheless different conditions. Both in order to understand the general structure of the theory and with an eye towards further applications, it is important to understand the relations between the two approaches.

\subsection{The main coercivity estimate of Grillakis-Shatah-Strauss}
Since in~\cite{gssII} the phase space $E$ on which the dynamics takes place is taken to be a Hilbert space, we place ourselves for this discussion in the Hilbert space setting of Section~\ref{s:maintheoremhil} and consider the situation described by~\eqref{eq:deftildeu}-\eqref{eq:spectraldec}. 

To state the coercivity estimate of~\cite{gssII} which is the analog of our Theorem~\ref{thm:localcoercivity},  we need some additional notation. We define
\begin{equation}
\tilde W:\Omega\cap \frak g_\xi \to \mathcal L_\xi(u_\xi)\in\R,
\end{equation}
which is the restriction of the W-function~\eqref{eq:WhatF} to the sub-Lie-algebra~$\frak g_\xi$ of $\frak g$, defined in~\eqref{eq:gxi}. Also
\begin{equation}\label{eq:tildeoxi}
\tilde{\calO}_{u_\xi}=\Phi_{G_\xi}(u_\xi),
\end{equation}
is the $G_\xi$ orbit through $u_\xi$. Since a priori $G_\xi$ differs from $G_{\mu_\xi}$, one should not confuse $\tilde\Ocal_{u_\xi}$ with $\Ocal_{u_\xi}$, which is the $G_{\mu_\xi}$-orbit through $u_\xi$. We introduce furthermore
\begin{equation}
\tilde\Sigma_\xi=\{v\in \Ban\mid \eta\cdot F(v)= \eta\cdot\mu_\xi, \forall \eta\in\frak g_\xi\}.
\end{equation}
In other words, $\tilde\Sigma_\xi$ is the constraint surface corresponding to the constants of the motion $\eta\cdot F$ for $\eta\in\frak g_\xi$. Note that $\Sigma_{\mu_\xi}\subset \tilde\Sigma_\xi$. In fact, when the moment map is regular at $\mu_\xi$, then $\tilde\Sigma_\xi$ is a submanifold of $\Ban$ of co-dimension $\text{dim} \frak g_\xi$ which contains the submanifold $\Sigma_{\mu_\xi}$, itself of codimension~$\text{dim} \frak g$. 
The following theorem, which is the analog of Theorem~\ref{thm:localcoercivity} above, can be inferred from the proof of Theorem~4.1 in~\cite{gssII}. 
\begin{theorem}
	\label{thm:gssII}
	Suppose Hypotheses~A and~C hold. Let $\xi\in \Omega$ and suppose
	\begin{enumerate}
		\item[(i)] $D^2_\xi \tilde W$ is non-degenerate, \emph{i.e.} $\Ker(D_\xi^2\tilde W)=\{0\}$,
		\item[(ii)]  $\mathrm{Ker}D^2_{u_\xi}\calL_\xi= Z_\xi$, \label{hyp:kernel}
		\item[(iii)]  $\inf (\sigma(\nabla^2\calL_\xi (u_\xi))\cap (0,+\infty))>0$,
		\item[(iv)]  $p(D^2_\xi \tilde W)=n(D^2_{u_\xi}\Lcal_\xi)$.
	\end{enumerate}
	Then there exists $\delta>0$ such that
	\begin{equation}
		\label{eq:localcoercivitygssII}
		\forall v\in T_{u_\xi}\tilde\Sigma_{\xi}\cap \left(T_{u_\xi}\tilde\Ocal_{u_\xi}\right)^{\perp},\ D^2_{u_\xi}\calL_{\xi}(v,v)\ge \delta \| v\|^2.
	\end{equation}
\end{theorem}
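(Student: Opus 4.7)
The plan is to mirror the proof of Theorem~\ref{thm:localcoercivity} step by step, with the triple $(\tilde W, T_{u_\xi}\tilde\Sigma_\xi, T_{u_\xi}\tilde\Ocal_{u_\xi})$ playing the role of $(W, T_{u_\xi}\Sigma_{\mu_\xi}, T_{u_\xi}\Ocal_{u_\xi})$. The key observation is that $\tilde W$ is the restriction of $W$ to $\Omega\cap\frak g_\xi$, so $D_\xi^2\tilde W$ is simply the restriction of $D_\xi^2 W$ to $\frak g_\xi\times\frak g_\xi$. Hence the identity of Lemma~\ref{lem:prophessWhessL}~(2) still reads
\[
D^2_{u_\xi}\calL_\xi(\eta_1\cdot\nabla_\xi u_\xi,\eta_2\cdot\nabla_\xi u_\xi) = -D^2_\xi\tilde W(\eta_1,\eta_2),\quad \eta_1,\eta_2\in\frak g_\xi,
\]
and the maximally negative definite subspace $\Xcal_-\subset\Ucalxi$ needed in the first step will be built out of the vectors $\eta_j\cdot\nabla_\xi u_\xi$, where $\{\eta_j\}_{j=1}^{n_-}\subset\frak g_\xi$ span a maximal positive subspace of $D_\xi^2\tilde W$. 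Hypothesis~(iv) then ensures $\dim\Xcal_-=n(D^2_{u_\xi}\Lcal_\xi)$, so that $\Xcal_-$ is indeed maximal in $E$.

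Second, I will prove the transversality $\Xcal_-\cap T_{u_\xi}\tilde\Sigma_\xi=\{0\}$ along the lines of Lemma~\ref{lem:transversality}: an element $\eta\cdot\nabla_\xi u_\xi$ of this intersection satisfies, by definition of $\tilde\Sigma_\xi$ together with~\eqref{eq:diffFdiffFhat}, $D_\xi^2\tilde W(\eta,\eta')=0$ for every $\eta'\in\frak g_\xi$, so the non-degeneracy condition~(i) forces $\eta=0$. The $D^2_{u_\xi}\calL_\xi$-orthogonality between $\Xcal_-$ and $T_{u_\xi}\tilde\Sigma_\xi$ comes directly from~\eqref{eq:hessX*}: for $\eta\in\frak g_\xi$ and $v\in T_{u_\xi}\tilde\Sigma_\xi$, one has $D_{u_\xi}(\eta\cdot F)(v)=0$ by the very definition of $\tilde\Sigma_\xi$. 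Lemma~\ref{lem:Borthopositive}~(i) then promotes $T_{u_\xi}\tilde\Sigma_\xi$ to a positive subspace for $D^2_{u_\xi}\calL_\xi$.

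Third, I will use hypothesis~(ii), together with the inclusions $T_{u_\xi}\tilde\Ocal_{u_\xi}=Z_\xi\subset\Ker D_{u_\xi}F\subset T_{u_\xi}\tilde\Sigma_\xi$ that follow from Lemma~\ref{lem:prophessWhessL}~(5)-(6) and the consequent $\frak g_\xi\subset\frak g_{\mu_\xi}$. Applying Lemma~\ref{lem:Borthopositive}~(ii)-(iii) upgrades positivity to strict positive-definiteness of $D^2_{u_\xi}\calL_\xi$ on $\Ycal:=T_{u_\xi}\tilde\Sigma_\xi\cap (T_{u_\xi}\tilde\Ocal_{u_\xi})^{\perp}$. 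The coercive estimate~\eqref{eq:localcoercivitygssII} then follows by the same spectral gap argument as in Section~\ref{s:proofmaintheorem}: the orthogonal decomposition $E_0^\perp=\Ycal\oplus_\perp (T_{u_\xi}\tilde\Sigma_\xi)^{\perp}$ has a finite-dimensional second summand of dimension $\dim\frak g_\xi$, so that the projection $P$ onto $\Ycal$ turns the restriction of $\nabla^2\calL_\xi(u_\xi)$ to $E_0^\perp$ into $P\nabla^2\calL_\xi(u_\xi)P$ plus a finite-rank operator; hypothesis~(iii) combined with strict positivity on $\Ycal$ then delivers the desired spectral gap and hence the coercive bound.

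The hardest part is not any single step in isolation but the bookkeeping: the statement mixes $G_\xi$- and $G_{\mu_\xi}$-objects, and each invocation of Lemmas~\ref{lem:transversality}--\ref{lem:Borthopositive} must be done on a constraint/orbit pair that actually contains the kernel $E_0=Z_\xi$. The chain $\frak g_\xi\subset\frak g_{\mu_\xi}$ from Lemma~\ref{lem:prophessWhessL}~(6) is precisely what guarantees the required nestings between the tangent spaces, and once it is made explicit the rest of the argument is formally identical to that of Theorem~\ref{thm:localcoercivity}.
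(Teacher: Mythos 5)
Your proof is correct and follows exactly the route the paper itself indicates: the authors explicitly state (in the remark following the theorem) that the proof proceeds along the same lines as that of Theorem~\ref{thm:localcoercivity}, with the triple $(\tilde W, T_{u_\xi}\tilde\Sigma_\xi, T_{u_\xi}\tilde\Ocal_{u_\xi})$ in place of $(W, T_{u_\xi}\Sigma_{\mu_\xi}, T_{u_\xi}\Ocal_{u_\xi})$, and they do not reproduce the details. Your bookkeeping of the required inclusions (in particular $Z_\xi\subset\Ker D_{u_\xi}F\subset T_{u_\xi}\tilde\Sigma_\xi$ via Lemma~\ref{lem:prophessWhessL}) is exactly what makes the transposition work.
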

It is clear that, when  the invariance group $G$ is one-dimensional, \emph{i.e.} $\text{dim}\ \frak g=1$, this theorem is identical to Theorem~\ref{thm:localcoercivity}. Indeed, then $G=G_\xi=G_{\mu_\xi}$ and hence $W=\tilde W$ so that both the assumptions and the conclusions of both theorems are identical. This is the situation studied in~\cite{gssI} and~\cite{stuart08}. The same conclusions hold true more generally when the group $G$ is abelian, since then again, $G_\xi=G_{\mu_\xi}=G$. In general, however, the groups $G_\xi$ and $G_{\mu_\xi}$ may be distinct, and so may therefore be the orbits $\tilde\Ocal_{u_\xi}$ and $\Ocal_{u_\xi}$. Hence,  a priori, the two approaches could yield different coercivity estimates and hence different stability results. Their comparison therefore needs to be done with care, a task we turn to in the next subsection. 

\begin{remark}\mbox{}
\begin{enumerate}
\item A proof of Theorem~\ref{thm:gssII} can be given along the same lines as the proof of Theorem~\ref{thm:localcoercivity} in Section~\ref{s:proofmaintheorem} and we don't reproduce it here. 
We point out that in fact only the bound $D^2_{u_\xi}\calL_{\xi}(v,v)\ge0$ is shown in~\cite{gssII}; the argument leading from that bound to~\eqref{eq:localcoercivitygssII} is the same as in the proof of Theorem~\ref{thm:localcoercivity} above. 
\item The theorem actually only necessitates a slightly weakened version of Hypotheses~A and~C. Indeed, once a $\xi\in\frak g$ is found satisfying the stationary equation~\eqref{eq:statequation}, only the subgroup $G_\xi$ of $G$ is still of relevance to its assumptions, its statement and its proof. In particular, it is sufficient to establish persistence of the relative equilibrium on an open subset $\Omega$ of $\frak g_\xi$, for the same fixed value of $\xi$. Of course, whenever the persistence result of Section~\ref{s:persistence} applies, this is not a real gain.  
\item The proof of Theorem~4.1 in~\cite{gssII} uses Theorem~3.1 of that same paper. We point out that the latter necessitates the unstated assumption that $\xi'\in\Omega\cap \frak g_\xi \to u_{\xi'}$ has an injective derivative at $\xi$. In Theorem~\ref{thm:gssII} above, this assumption follows from hypothesis~(i) and Lemma~\ref{lem:transversality}. Note also that, if persistence of the relative equilibrium is shown as in Section~\ref{s:persistence}, the assumption follows from the construction (Theorem~\ref{thm:persistenceb}).
\end{enumerate}
\end{remark}

\subsection{Comparing Theorem~\ref{thm:localcoercivity} to Theorem~\ref{thm:gssII}}
Let us first compare the respective conclusions~\eqref{eq:localcoercivity} and~\eqref{eq:localcoercivitygssII} as follows. Writing
\begin{equation}\label{eq:negcone}
\mathcal C_-=\{u\in\Ban\mid \HessL(u,u)<0\}
\end{equation}
for the negative cone of $\HessL_\xi$, we see that they imply that
\begin{equation}\label{eq:coneintersection}
T_{u_\xi}\Sigma_{\mu_\xi}\cap \mathcal C_-=\emptyset,\quad\text{respectively}\quad T_{u_\xi}\tilde\Sigma_{\xi}\cap \mathcal C_-=\emptyset,
\end{equation}
meaning that $T_{u_\xi}\Sigma$, respectively $T_{u_\xi}\tilde\Sigma_\xi$ are positive subspaces of $E$ for $\HessL$. Since $T_{u_\xi}\Sigma_{\mu_\xi}\subset T_{u_\xi}\tilde\Sigma_\xi$ the second of these statements implies the first and should in general be harder to obtain. Indeed, the cone $\calC_-$ may avoid $T_{u_\xi}\Sigma_{\mu_\xi}$ but have a non-trivial intersection with $T_{u_\xi}\tilde \Sigma_\xi$. This is further reflected in the fact that~\eqref{eq:coneintersection} implies that
$$
\text{codim}(T_{u_\xi}\Sigma_{\mu_\xi})\geq n(\HessL_\xi)\quad \text{respectively}\quad \text{codim}(T_{u_\xi}\tilde \Sigma_\xi)\geq n(\HessL_\xi).
$$
When $u_\xi$ is a regular relative equilibrium, one has $\text{dim}\frak g=\text{codim}(T_{u_\xi}\Sigma_{\mu_\xi})\geq \text{dim}\frak g_{\xi}=\text{codim}(T_{u_\xi}\tilde \Sigma_\xi)$.

To understand how the stronger conclusion comes about, one may note that condition~(iv) of Theorem~\ref{thm:gssII} has a more limited range of applicability than condition~(iv) of Theorem~\ref{thm:localcoercivity} since in general
\begin{equation}\label{eq:inequalities2}
p(D^2_\xi\tilde W)\leq p(D_\xi^2W)\leq n(\HessL_\xi).
\end{equation}
In particular, condition~(iv) of Theorem~\ref{thm:gssII} cannot be satisfied when $p(D^2_\xi\tilde W)< p(D_\xi^2W)$. To illustrate this phenomenon, we will give below a simple finite dimensional example where indeed
\begin{equation*}
p(D^2_\xi\tilde W)<p(D_\xi^2W)= n(\HessL_\xi),
\end{equation*}
so that Theorem~\ref{thm:localcoercivity} applies, but Theorem~\ref{thm:gssII} does not.

The following corollary further clarifies the link between the two results.
\begin{corollary}\label{cor:gssIIcor}
Suppose the hypotheses of Theorem~\ref{thm:gssII} are satisfied. Then 
$$
T_{u_\xi}\tilde\Ocal_{u_\xi}=T_{u_\xi}\Ocal_{u_\xi}
$$ 
so that
there exists $\delta>0$ such that
\begin{equation}
	\label{eq:localcoercivitygssIIbis}
\forall v\in T_{u_\xi}\tilde\Sigma_{\xi}\cap \left(T_{u_\xi}\Ocal_{u_\xi}\right)^{\perp},\ D^2_{u_\xi}\calL_{\xi}(v,v)\ge \delta \| v\|^2.
\end{equation}
Moreover hypotheses~(ii), (iii) and (iv) of Theorem~\ref{thm:localcoercivity} are satisfied. If, in addition, $u_\xi$ is a regular relative equilibrium, then $\frak g_{\mu_\xi}=\frak g_\xi$. 
\end{corollary}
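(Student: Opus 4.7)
The plan is to prove the equality $T_{u_\xi}\tilde\Ocal_{u_\xi}=T_{u_\xi}\Ocal_{u_\xi}$ first, from which the remaining assertions fall out with little extra work. One inclusion is free: by Lemma~\ref{lem:prophessWhessL}~(6) we have $\frak g_\xi\subset\frak g_{\mu_\xi}$, hence $T_{u_\xi}\tilde\Ocal_{u_\xi}=Z_\xi\subset T_{u_\xi}\Ocal_{u_\xi}$. The substance of the corollary lies in the reverse inclusion, which I intend to extract directly from the coercivity~\eqref{eq:localcoercivitygssII} provided by Theorem~\ref{thm:gssII}.

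Given $\eta\in\frak g_{\mu_\xi}$, set $v=X_\eta(u_\xi)$ and perform the $\scalar_E$-orthogonal decomposition $v=v_1+v_2$ with $v_1\in Z_\xi$ and $v_2\in(T_{u_\xi}\tilde\Ocal_{u_\xi})^\perp$; this is legitimate because $Z_\xi$ is finite-dimensional, hence closed. I first check that $v_2\in T_{u_\xi}\tilde\Sigma_\xi$: indeed $v\in T_{u_\xi}\Sigma_{\mu_\xi}\subset T_{u_\xi}\tilde\Sigma_\xi$, and $v_1\in Z_\xi\subset \Ker D_{u_\xi}F\subset T_{u_\xi}\tilde\Sigma_\xi$ by Lemma~\ref{lem:prophessWhessL}~(5). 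Theorem~\ref{thm:gssII} therefore applies and gives $D^2_{u_\xi}\Lcal_\xi(v_2,v_2)\ge\delta\|v_2\|^2$.

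The punchline is that this quantity actually vanishes. Because $v_1\in Z_\xi=\Ker D^2_{u_\xi}\Lcal_\xi$ by hypothesis~(ii) of Theorem~\ref{thm:gssII}, the bilinear form kills $v_1$ on both sides, leaving $D^2_{u_\xi}\Lcal_\xi(v_2,v_2)=D^2_{u_\xi}\Lcal_\xi(v,v)$. The identity $D^2_{u_\xi}\Lcal_\xi(X_\eta(u_\xi),w)=[\eta,\xi]\cdot D_{u_\xi}F(w)$ (derived in the proof of Lemma~\ref{lem:prophessWhessL}~(4)) evaluated at $w=v$, together with $D_{u_\xi}F(X_\eta(u_\xi))=\mathrm{ad}^*_\eta\mu_\xi=0$ for $\eta\in\frak g_{\mu_\xi}$, forces $D^2_{u_\xi}\Lcal_\xi(v,v)=0$. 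Thus $v_2=0$ and $v=v_1\in Z_\xi$, which settles the desired equality.

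The rest is quick. The hypothesis checks for Theorem~\ref{thm:localcoercivity} are straightforward: (iii) is identical in the two theorems; (ii) combines hypothesis~(ii) of Theorem~\ref{thm:gssII} with $Z_\xi=T_{u_\xi}\Ocal_{u_\xi}$; (iv) follows from the sandwich~\eqref{eq:inequalities2} together with hypothesis~(iv) of Theorem~\ref{thm:gssII}. Moreover~\eqref{eq:localcoercivitygssIIbis} follows from~\eqref{eq:localcoercivitygssII} since $(T_{u_\xi}\Ocal_{u_\xi})^\perp=(T_{u_\xi}\tilde\Ocal_{u_\xi})^\perp$. Finally, assuming $u_\xi$ is a regular relative equilibrium: for any $\eta\in\frak g_{\mu_\xi}$ we have $X_\eta(u_\xi)\in Z_\xi=\Ker D^2_{u_\xi}\Lcal_\xi$, so the identity above gives $[\eta,\xi]\cdot D_{u_\xi}F(w)=0$ for every $w\in\Ban$; surjectivity of $D_{u_\xi}F$ forces $[\eta,\xi]=0$, i.e.\ $\eta\in\frak g_\xi$, and combined with Lemma~\ref{lem:prophessWhessL}~(6) this yields $\frak g_\xi=\frak g_{\mu_\xi}$. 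The main conceptual hurdle will be recognizing that the coercivity on the larger constraint surface $T_{u_\xi}\tilde\Sigma_\xi$ can be leveraged, via the orthogonal decomposition, to pin down the elusive directions in $T_{u_\xi}\Ocal_{u_\xi}$ that are \emph{a priori} not controlled by $Z_\xi$.
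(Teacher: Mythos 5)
Your proof is correct and follows essentially the same route as the paper: the key step in both is to take $v\in T_{u_\xi}\Ocal_{u_\xi}$, split it orthogonally against $T_{u_\xi}\tilde\Ocal_{u_\xi}=Z_\xi=\Ker D^2_{u_\xi}\Lcal_\xi$, observe that $D^2_{u_\xi}\Lcal_\xi(v,v)=0$ so the coercivity of Theorem~\ref{thm:gssII} kills the orthogonal component, and then read off the remaining claims. The only (harmless) variation is in the final step, where you deduce $[\eta,\xi]=0$ directly from $X_\eta(u_\xi)\in\Ker D^2_{u_\xi}\Lcal_\xi$ and surjectivity of $D_{u_\xi}F$, whereas the paper uses the injectivity of $\eta\mapsto X_\eta(u_\xi)$ guaranteed by the same regularity assumption; you also make explicit the check that the orthogonal component lies in $T_{u_\xi}\tilde\Sigma_\xi$, which the paper leaves implicit.
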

We can conclude from the previous discussion and the corollary that, under the non-degeneracy hypothesis $\Ker(D_\xi^2W)=\{0\}$, Theorem~\ref{thm:localcoercivity} provides the desired coercivity estimate~\eqref{eq:localcoercivity} under weaker conditions than Theorem~\ref{thm:gssII}. As a result, to find a situation where Theorem~\ref{thm:gssII} does apply, whereas Theorem~\ref{thm:localcoercivity} does not, one has to suppose $\Ker(D_\xi^2W)\not=\{0\}$, whereas $\Ker(D^2_\xi \tilde W)=\{0\}$.  We did not find an example of such a situation. 
\begin{proof}
It follows from Lemma~\ref{lem:prophessWhessL}~(6) that $T_{u_\xi}\tilde\Ocal_{u_\xi}\subset T_{u_\xi}\Ocal_\xi$.  Also, Lemma~\ref{lem:prophessWhessL}~(4) implies that $T_{u_\xi}\Ocal_{u_\xi}\subset \Ker(\HessL_\xi\mid T_{u_\xi}\Sigma_{\mu_\xi})$. So, if $v\in T_{u_\xi}\Ocal_{u_\xi}\subset T_{u_\xi}\Sigma_{\mu_\xi}\subset T_{u_\xi}\tilde\Sigma_{\xi}$, then $\HessL_\xi(v,v)=0$. Writing 
$$
v=v_\parallel +v_\perp, \quad v_\parallel\in T_{u_\xi}\tilde\Ocal_{u_\xi}, \quad v_\perp \in (T_{u_\xi}\tilde\Ocal_{u_\xi})^\perp
$$
we have, 
\begin{eqnarray*}
0&=&\HessL(v, v)= \HessL_\xi(v_\parallel,  v_\parallel) +\HessL_\xi(v_\perp, v_\perp)+ 2 \HessL_\xi(v_\parallel, v_\perp)\\
&=&\HessL_\xi(v_\perp, v_\perp),
\end{eqnarray*}
since $v_\parallel\in \Ker(\HessL_\xi)$. It follows from~\eqref{eq:localcoercivitygssII} that $v_\perp=0$ so that $v\in T_{u_\xi}\tilde\Ocal_{u_\xi}$. We conclude that $T_{u_\xi}\tilde \Ocal_{u_\xi}=T_{u_\xi}\Ocal_{u_\xi}$ and hence~\eqref{eq:localcoercivitygssIIbis} follows from~\eqref{eq:localcoercivitygssII}. It also follows that hypothesis~(ii) of Theorem~\ref{thm:localcoercivity} is satisfied. Hypothesis~(iii) is the same in both theorems and hypothesis~(iv) of Theorem~\ref{thm:gssII}, together with~\eqref{eq:inequalities2}, implies hypothesis (iv) of Theorem~\ref{thm:localcoercivity}.

To prove the last statement, recall from Lemma~\ref{lem:prophessWhessL}~(6)  that $\frak g_\xi\subset \frak g_{\mu_\xi}$. Now let $\eta\in\frak g_{\mu_\xi}$. Since  $T_{u_\xi}\tilde \Ocal_{u_\xi}=T_{u_\xi}\Ocal_{u_\xi}$ there exists $\eta'\in \frak g_{\xi}$ so that $X_\eta(u_\xi)=X_{\eta'}(u_\xi)$. Since $F$ is regular at $u_\xi$, this implies $\eta'=\eta$, so that $\eta\in \frak g_\xi$, proving the result.
\end{proof}

To complete our comparative analysis of those two theorems, we further analyse the conditions on the kernel of $\HessL_\xi$ they impose. Similarly to the non-degeneracy condition~(i), those conditions are also not in a clear logical relation, in particular because they refer to two a priori different subgroups of $G$, namely $G_\xi$ and $G_{\mu_\xi}$. The following lemma sheds further light on the situation.
\begin{lemma}\label{lem:compare}
Suppose Hypotheses~A and~C are satisfied. Let $\xi\in\Omega$ and suppose $\Ker(D_\xi^2W)=\{0\}$. Then
\begin{equation}\label{eq:kernel=kernelrestrict}
\Ker(\HessL_\xi)=\Ker( \HessL_\xi\mid \Ker D_{u_\xi}F).
\end{equation}
In addition, the following two statements are equivalent:
\begin{itemize}
\item[(i)] $\Ker(\HessL_\xi)=T_{u_\xi}\tilde\Ocal_\xi$;
\item[(ii)] $\Ker(\HessL_\xi)=T_{u_\xi}\Ocal_\xi$ and $\frak g_\xi=\frak g_{\mu_\xi}$.
\end{itemize}
\end{lemma}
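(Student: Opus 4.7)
The plan is to establish the kernel identity~\eqref{eq:kernel=kernelrestrict} first and then use it together with the structural results of Section~\ref{s:lemmas} to settle the equivalence. The non-degeneracy hypothesis $\Ker(D_\xi^2W)=\{0\}$ will be used in two distinct ways: via Lemma~\ref{lem:prophessWhessL}~(1) combined with Lemma~\ref{lem:transversality} it yields the transversal decomposition $E=\Ucalxi\oplus\Ker D_{u_\xi}F$, and it implies that $F$ is regular at $u_\xi$, which in turn makes the infinitesimal action $\eta\in\frak g\mapsto X_\eta(u_\xi)$ injective.

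For~\eqref{eq:kernel=kernelrestrict}, the inclusion $\Ker(\HessL_\xi)\subset\Ker(\HessL_\xi\mid\Ker D_{u_\xi}F)$ is immediate from Lemma~\ref{lem:prophessWhessL}~(5). For the reverse inclusion I take $v\in\Ker(\HessL_\xi\mid\Ker D_{u_\xi}F)$ and an arbitrary $w\in E$, and decompose $w=\eta\cdot\nabla_\xi u_\xi+w'$ with $w'\in\Ker D_{u_\xi}F$. Then $\HessL_\xi(v,w')=0$ by hypothesis, while $\HessL_\xi(v,\eta\cdot\nabla_\xi u_\xi)=0$ because $v\in\Ker D_{u_\xi}F$, thanks to Lemma~\ref{lem:prophessWhessL}~(3) and the symmetry of $\HessL_\xi$. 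Hence $v\in\Ker\HessL_\xi$.

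For the equivalence, the direction $(ii)\Rightarrow(i)$ is immediate: $\frak g_\xi=\frak g_{\mu_\xi}$ forces $T_{u_\xi}\tilde\Ocal_{u_\xi}=Z_\xi=\{X_\eta(u_\xi)\mid\eta\in\frak g_{\mu_\xi}\}=T_{u_\xi}\Ocal_{u_\xi}$. For $(i)\Rightarrow(ii)$ I start from Lemma~\ref{lem:prophessWhessL}~(4), which gives $T_{u_\xi}\Ocal_{u_\xi}\subset\Ker(\HessL_\xi\mid\Ker D_{u_\xi}F)$; combined with~\eqref{eq:kernel=kernelrestrict} just proved, this yields $T_{u_\xi}\Ocal_{u_\xi}\subset\Ker\HessL_\xi=T_{u_\xi}\tilde\Ocal_{u_\xi}=Z_\xi$. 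Since Lemma~\ref{lem:prophessWhessL}~(6) gives the opposite inclusion $Z_\xi\subset T_{u_\xi}\Ocal_{u_\xi}$, I conclude that $Z_\xi=T_{u_\xi}\Ocal_{u_\xi}=\Ker\HessL_\xi$, which already delivers the first half of~(ii).

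To upgrade the equality of tangent spaces $Z_\xi=T_{u_\xi}\Ocal_{u_\xi}$ to the equality of Lie algebras $\frak g_\xi=\frak g_{\mu_\xi}$, I invoke the regularity of $F$ at $u_\xi$ (which follows from $\Ker(D_\xi^2W)=\{0\}$ via Lemmas~\ref{lem:prophessWhessL}~(1) and~\ref{lem:transversality}): this makes $\eta\cdot D_{u_\xi}F=0$ force $\eta=0$, and since $X_\eta(u_\xi)=\Jcal^{-1}(\eta\cdot D_{u_\xi}F)$ with $\Jcal$ injective, the map $\eta\in\frak g\mapsto X_\eta(u_\xi)$ is injective. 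A dimension count on its restrictions to $\frak g_\xi$ and $\frak g_{\mu_\xi}$ then gives $\dim\frak g_\xi=\dim\frak g_{\mu_\xi}$, and together with $\frak g_\xi\subset\frak g_{\mu_\xi}$ (Lemma~\ref{lem:prophessWhessL}~(6)) this concludes the proof. The only step that requires real care is this last one: the jump from infinitesimal orbits to isotropy algebras is not automatic in general — the map $\eta\mapsto X_\eta(u_\xi)$ can have a kernel — and the whole point is that the non-degeneracy assumption on $D_\xi^2W$, channeled through regularity of $F$, is precisely what rules this out.
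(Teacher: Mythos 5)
Your proof is correct and follows essentially the same route as the paper's: both rest on the transversal decomposition $E=\Ucalxi\oplus\Ker D_{u_\xi}F$ obtained from $\Ker(D^2_\xi W)=\{0\}$ via Lemmas~\ref{lem:prophessWhessL}~(1) and~\ref{lem:transversality}, the $D^2_{u_\xi}\Lcal_\xi$-orthogonality~\eqref{eq:ortho}, and parts (4)--(6) of Lemma~\ref{lem:prophessWhessL}, with the final equality $\frak g_\xi=\frak g_{\mu_\xi}$ extracted from the regularity of $F$ at $u_\xi$ exactly as in the proof of Corollary~\ref{cor:gssIIcor}. The only cosmetic difference is that you verify the two inclusions of~\eqref{eq:kernel=kernelrestrict} directly, where the paper invokes the kernel-sum identity~\eqref{eq:kernelsum} together with $\Ker(\HessL_\xi\mid\Ucalxi)=D_\xi\tilde u(\Ker D^2_\xi W)=\{0\}$.
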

The lemma shows that, if $u_\xi$ is 
a non-degenerate (see definition at page 10), and hence regular, relative equilibrium,
then the condition on the kernel of $\HessL_\xi$ of Theorem~\ref{thm:gssII} implies not only the kernel condition in Theorem~\ref{thm:localcoercivity}, but in addition that $\frak g_\xi=\frak g_{\mu_\xi}$. This statement is independent of the other spectral conditons of these theorems on $D^2_{u_\xi}\Lcal_\xi$. 
\begin{proof} It follows from $\Ker(D^2_\xi W)=\{0\}$, together with Lemma~\ref{lem:prophessWhessL}~(1) and Lemma~\ref{lem:transversality} that $\Ban=\Ucal_\xi\oplus T_{u_\xi}\Sigma_{\mu_\xi}$. Hence~\eqref{eq:ortho} implies that
\begin{equation}\label{eq:KerHessLdecomposition}
\Ker(\HessL_\xi)= \Ker(\HessL_\xi\mid \Ucalxi)\oplus\Ker( \HessL_\xi\mid \Ker D_{u_\xi}F)
\end{equation}
On the other hand, it follows from~\eqref{eq:hessWhessL} that
$$
\Ker(\HessL_\xi\mid \Ucalxi)=D_\xi\tilde u(\Ker(D^2_\xi W))=\{0\},
$$
so that the first statement of the Lemma follows.\\
$(i)\Rightarrow (ii)$ From Lemma~\ref{lem:prophessWhessL}~(4), together with~\eqref{eq:kernel=kernelrestrict} and hypothesis~(i), we conclude that 
$$
T_{u_\xi}\Ocal_{u_\xi}\subset\Ker( \HessL_\xi\mid \Ker D_{u_\xi}F)=\Ker(\HessL_\xi) = T_{u_\xi}\tilde\Ocal_{u_\xi}\subset T_{u_\xi}\Sigma_{\mu_\xi}.
$$
On the other hand, as in the proof of Corollary~\ref{cor:gssIIcor}, we have $T_{u_\xi}\tilde\Ocal_{u_\xi}\subset T_{u_\xi}\Ocal_{u_\xi}$. Hence $T_{u_\xi}\tilde\Ocal_{u_\xi}= T_{u_\xi}\Ocal_{u_\xi}$ and the first statement of~(ii) follows.
The second statement is now proven as in the proof of Corollary~\ref{cor:gssIIcor}. \\
$(ii)\Rightarrow (i)$ This is obvious.
\end{proof}

\subsection{Proving orbital stability}
The above coercivity estimate~\eqref{eq:localcoercivitygssII} (or, equivalently~\eqref{eq:localcoercivitygssIIbis}) is used in~\cite{gssII} as an essential input to show the $G_\xi$-orbital stability of $u_\xi$. Note that this distinguishes their approach from the rest of the literature on orbital stability, including this paper and~\cite{debgenrot15}, where instead $G_{\mu_\xi}$-stability is proven. The argument given in~\cite{gssII}  (and also in~\cite{gssI}) leading from the above coercivity estimate to $G_\xi$-orbital stability of $u_\xi$ is however based on an implicit assumption on~$F$, referred to as Hypothesis~F in~\cite{debgenrot15}. It was explained in~\cite{debgenrot15} how, starting from a coercivity estimate, this condition is used in the cited works to obtain orbital stability for general perturbations of the relative equilibrium from orbital stability for perturbations within the constraint surface $\tilde \Sigma_\mu$: see Section~8.3 and Theorem~9 of~\cite{debgenrot15} 
However, it was also explained in that last paper that Hypothesis~F is typically not satisfied when $F$ takes values in $\R^m$, with $m>1$: it is therefore insufficient to deal with the situations under consideration in this paper as well as in~\cite{gssII}.  As recalled in Section~2 above, it is instead possible to use arguments provided in~\cite{debgenrot15} to prove $G_{\mu_\xi}$-relative stability, using~\eqref{eq:localcoercivity} (or, a fortiori~\eqref{eq:localcoercivitygssIIbis}) as a starting point.


\subsection{An example}\label{s:example}
We end this section with a simple  but illustrative example in $\Ban=\R^6$ where Theorem~\ref{thm:localcoercivity} applies, but Theorem~\ref{thm:gssII} does not. As already pointed out, $G$ must be non-commutative for this to happen.  Consider the SO$(3)$-invariant Hamiltonian 
\begin{equation}
H_{\alpha}(q,p)=H_0(q,p)-\alpha F^2(q,p),\quad \text{with}\quad H_0(q,p)=\frac{\|p\|^2}{2}+V(\|q\|), 
\end{equation}	
and $F(q,p)=q\wedge p$. Note that $H_0$ is the Hamiltonian of a particle in a central potential $V$ and that the components of the angular momentum vector $F$ generate rotations. Since $H_0$ Poisson commutes with $F^2$ and since $F^2$ generates rotations about $q\wedge p$, it is easy to see that the circular orbits of $H_0$ are also flow lines of $H_\alpha$ and that they are relative equilibria. These are the ones whose orbital stability we shall study. Consider for that purpose the stationary equation, with $\xi\in\R^3\simeq \frak{so}(3),$
$$
D_{u_\xi}H_\alpha-\xi\cdot D_{u_\xi}F=0,
$$
where $u_{\xi}=(q_\xi, p_\xi)$. A simple computation shows that any solution $u_\xi$ is of the form 
$u_{\xi}=(q_\xi, p_\xi)=(\rho_\xi\hat q_\xi, \sigma_\xi\hat p_\xi)$ with
$$
\sigma^2_\xi=\rho_\xi V'(\rho_\xi),\quad\hat q_\xi\cdot \hat p_\xi=0.
$$ 
Here $\rho_\xi>0, \sigma_\xi>0$ and we write $\hat a=a/\|a\|$ for each $a\in\R^3$.  We have 
$$
\mu_\xi=F(u_\xi)=q_\xi\wedge p_\xi=\rho_\xi\sigma_\xi \hat q_\xi\wedge \hat p_\xi, \text{with}\ \xi=\eta_{\alpha,\xi}\mu_\xi,\ \eta_{\alpha,\xi}=\frac{1-2\alpha\rho_{\xi}^2}{\rho_\xi^2}.
$$ 
Clearly, in this situation $G_\xi=G_{\mu_\xi}\simeq\mathrm{SO}(2)$. One has
$$
\hat F(\xi)=\rho_\xi\sigma_\xi \hat\xi =\rho_\xi^{3/2}\left[V'(\rho_\xi)\right]^{1/2}\hat\xi 
$$
$$
\|\xi\|=\rho_\xi\sigma_\xi \frac{|1-2\alpha\rho_{\xi}^2|}{\rho_\xi^2}=\rho_\xi^{3/2}\left[V'(\rho_\xi)\right]^{1/2}\frac{|1-2\alpha\rho_{\xi}^2|}{\rho_\xi^2}=\left[\rho_\xi^{-1}V'(\rho_\xi)\right]^{1/2}|1-2\alpha\rho_{\xi}^2|.
$$
For illustrative purposes, it is sufficient to consider $V(\rho_\xi)=\frac12\omega\rho_\xi^2$. Then it is clear that, provided $2\alpha> \rho_\xi^{-2}$, $\hat F$ is a local diffeomorphism and
$$
\|\xi\|=\omega^{1/2}(2\alpha\rho_\xi^2-1).
$$
 A simple computation then yields
$$
W(\xi)=H_\alpha(u_\xi)-\xi\cdot F(u_\xi)=\alpha\omega\rho_\xi^4=\frac14\frac{\omega}{\alpha}\left(1+\frac{\|\xi\|}{\sqrt\omega}\right)^2,
$$
and furthermore that
$$
D_\xi^2 W(v, v)=\frac{1}{2\alpha}\left[1+\frac{\sqrt\omega}{\|\xi\|}\right]v^2-\frac{\sqrt{\omega}}{2\alpha\|\xi\|}(v\cdot\hat\xi)^2\geq
\frac{v^2}{2\alpha},
$$
which is positive definite so that $p(D^2_\xi W)=3$. Hence, $n(D^2\Lcal_\xi)\geq 3$.  A further lengthy but straightforward computation shows that 
\begin{eqnarray*}
n(D^2\Lcal_\xi)=3,
\end{eqnarray*}
and that $\Ker(D^2_{u_\xi}\Lcal_\xi)=T_{u_\xi}\Ocal_{u_\xi}$. Hence Theorem~\ref{thm:localcoercivity} implies that the circular orbits are orbitally stable.

On the other hand the assumptions of \cite{gssII} are too strong to apply in this simple example. Indeed, in \cite{gssII} the authors consider the Hessian of function $W$ restricted to $\frak{g}_\xi$. The main hypothesis of their stability theorem is that $p(D^2_\xi\tilde W)=n(D_{u_\xi}^2 \Lcal_{\alpha,\xi})$. 
In the present situation this condition is not satisfied. Indeed,  since $\frak g_\xi=\text{so}(2)$, $p(D^2_\xi\tilde W)\le 1$. In fact, it is easy to see it is equal to $1$. As a consequence, $p(D^2_\xi\tilde W)<n(D_{u_\xi}^2 \Lcal_{\alpha,\xi})=3$ and so the hypotheses of Theorem~\ref{thm:gssII} are not satisfied.

\vskip 1cm

\noindent{\bf Conflict of Interest:} The authors declare that they have
no conflict of interest.





\end{document}